\documentclass[lettersize,journal]{IEEEtran}

\usepackage{microtype}
\usepackage{graphicx}
\usepackage{booktabs} 
\usepackage{amsmath,amsfonts}
\usepackage{algorithmic}
\usepackage{algorithm}
\usepackage{array}
\usepackage[caption=false,font=normalsize,labelfont=sf,textfont=sf]{subfig}
\usepackage{textcomp}
\usepackage{stfloats}
\usepackage{url}
\usepackage{verbatim}
\usepackage{graphicx}
\usepackage{cite}
\usepackage{hyperref}

\usepackage{amssymb}
\usepackage{mathtools}
\usepackage{amsthm}

\usepackage[capitalize,noabbrev]{cleveref}

\newtheorem{theorem}{Theorem}[section]

\newtheorem{lemma}[theorem]{Lemma}

\theoremstyle{definition}
\newtheorem{definition}[theorem]{Definition}
\newtheorem{assumption}[theorem]{Assumption}
\theoremstyle{remark}
\newtheorem{remark}[theorem]{Remark}
\usepackage[textsize=tiny]{todonotes}
\usepackage{multirow}

\def\x{\bar{x}}
\def\y{\bar{y}}
\def\z{\bar{z}}
\def\v{\bar{v}}
\def\d{\bar{d}}
\def\t{\bar{t}}
\def\tx{\tilde{x}}
\def\ty{\tilde{y}}
\def\tv{\tilde{v}}
\def\mean{\frac{1}{n}\sum_{i=1}^n}
\def\argmin{\textrm{argmin}}

\newcommand{\mn}[1]{{\left\vert\kern-0.25ex\left\vert\kern-0.25ex\left\vert\kern0.3ex #1 
		\kern0.3ex\right\vert\kern-0.25ex\right\vert\kern-0.25ex\right\vert}}

\newcommand{\E}[1]{\mathbb{E}\Big[#1\Big]}
\newcommand{\Ek}[1]{\mathbb{E}_k\Big[#1\Big]}

\newcommand{\norm}[1]{\|#1\|^2}

\allowdisplaybreaks[3]
\newcommand{\ssldbo}{S$^3$LDBO}

\begin{document}
\title{\ssldbo: A Snapshot Single-Loop Algorithm \\for Decentralized Bilevel Optimization}

\author{Chao~Yin,
        Youran~Dong,
        Shiqian~Ma,
        Bofan~Wang,
        and~Junfeng~Yang%
\thanks{Chao Yin is with the School of Mathematics, Hohai University, Nanjing, P. R. China (e-mail: yinchao@hhu.edu.cn).}%
\thanks{Youran Dong, Bofan Wang, and Junfeng Yang are with the School of Mathematics, Nanjing University, Nanjing, P. R. China (e-mail: \mbox{yrdong@smail.nju.edu.cn}; wangbf@smail.nju.edu.cn; jfyang@nju.edu.cn).}%
\thanks{Shiqian Ma is with the Department of Computational Applied Mathematics and Operations Research, Rice University, Houston, USA (e-mail: sqma@rice.edu).}%
}

\maketitle

\begin{abstract}
Networked AI systems increasingly rely on multiple agents that collaboratively learn and adapt models over communication networks. 
In such systems, bilevel formulations naturally arise in hyperparameter optimization, data cleaning, and meta-learning, but the repeated evaluation of gradients, Jacobians, and Hessians can impose a substantial computational burden on individual agents. 
To address this challenge, we propose Snapshot-SLDBO (S$^3$LDBO), an efficient single-loop decentralized bilevel optimization algorithm that enables agents to intermittently skip expensive derivative evaluations through a snapshot mechanism. 
This mechanism can be interpreted as an autonomous computation-adaptation strategy for networked AI, where agents selectively perform costly local updates while maintaining global collaborative learning.
We establish the ergodic iteration complexity and the high probability nonergodic iteration complexity of the proposed algorithm within a deterministic setting. 
{Experimental results on hyperparameter optimization with synthetic and MNIST datasets, data hyper-cleaning on Fashion-MNIST, and decentralized meta-learning on miniImageNet demonstrate that the proposed algorithm improves computational efficiency while maintaining competitive learning performance.}
\end{abstract}

\begin{IEEEkeywords}
Decentralized bilevel optimization, snapshot gradient tracking, single-loop, networked AI systems.
\end{IEEEkeywords}

\section{Introduction}
{\IEEEPARstart{N}{etworked} AI systems, such as distributed sensing networks, edge intelligence, federated learning systems, and multi-agent autonomous platforms, require multiple agents to collaboratively learn and adapt models over communication networks \cite{xu2023edge,ye2022decentralized,chen2017multitask}. 
A central challenge in these systems is how to enable each agent to update its local model efficiently while maintaining global coordination with other agents. 
This challenge becomes more pronounced when the learning objective has a hierarchical structure, where model parameters, hyperparameters, data weights, or task-specific adaptation variables must be optimized jointly. 
Such hierarchical learning problems can often be formulated as bilevel optimization problems.}
Bilevel optimization (BO) has attracted increasing attention in recent years. It 
has found important applications in meta-learning \cite{franceschi2018bilevel, rajeswaran2019meta, ji2020convergence}, hyperparameter optimization \cite{pedregosa2016hyperparameter, franceschi2018bilevel, Zhang2023MP}, reinforcement learning \cite{hong2023two}, and adversarial learning \cite{bishop2020optimal, zhang2022revisiting}. 
To achieve state-of-the-art results in these domains, vast training datasets and distributed algorithms operating across multiple computing agents are often required.

This paper focuses on the following decentralized bilevel optimization (DBO) problem:
\begin{align}\label{DBO}
	\mathop{\min}\limits_{x \in \mathbb{R}^d}  & \;\Phi(x):=  F(x,y^*(x)) := \frac{1}{n}\sum_{i=1}^{n}F_i(x,y^*(x))\quad\nonumber\\
	\mathrm{s.t.} \   &y^*(x) \in  \mathop{\argmin}\limits_{y\in \mathbb{R}^q} f(x,y) :=  \frac{1}{n}\sum_{i=1}^{n}f_i(x,y),
\end{align}
where $n$ collaborative nodes, connected via a communication graph $\mathcal{G} = (\mathcal{V}, \mathcal{E})$, jointly solve this problem. 
Here, $\mathcal{V}$ represents the set of agents, and $\mathcal{E}$ represents the set of feasible communication links between the agents. Each node $i$ privately holds an upper-level cost function $F_i:\mathbb{R}^d\times\mathbb{R}^q\rightarrow\mathbb{R}$ and a lower-level cost function $f_i:\mathbb{R}^d\times\mathbb{R}^q\rightarrow\mathbb{R}$.
The decentralized framework eliminates the need for a central server and enables more efficient and scalable computation across distributed networks \cite{decentralized_vs_centralized_NIPS2017}.
In this paper, we propose an efficient single-loop algorithm for solving \eqref{DBO}. 

\subsection{Related work}
\paragraph{Bilevel optimization}
When utilizing gradient methods for solving \eqref{DBO}, a central challenge is how to estimate the hypergradient $\nabla \Phi(x)$, which is given by 
\begin{subequations}\label{hg}
\small
\begin{align}
&\nabla\Phi(x)=\frac{1}{n}\sum_{i=1}^{n}\left[\nabla_1 F_i\left(x, y^*(x)\right) - \nabla^2_{12} f_i\left(x, y^*(x)\right) z(x)\right],\\
&z(x) = \left[\frac{1}{n}\sum_{i=1}^{n}\nabla_{22}^2 f_i\left(x, y^*(x)\right)\right]^{-1} \frac{1}{n}\sum_{i=1}^{n}\nabla_2 F_i\left(x, y^*(x)\right).
\end{align}
\end{subequations}
%
To address this challenge, various algorithms have been proposed in the literature, leveraging approaches such as
approximate implicit differentiation (AID) \cite{ghadimi2018approximation, grazzi2020iteration, ji2021bilevel, ji2022will, chen2021closing, hong2023two, dagreou2022framework}, iterative differentiation (ITD) \cite{franceschi2018bilevel, pmlr-v89-shaban19a, ji2020convergence, grazzi2020iteration, ji2021bilevel, ji2022will, Liu2023tapami} and Neumann series-based approach \cite{ghadimi2018approximation, chen2021closing}.
Furthermore, both AID and ITD approaches involve heavy Hessian- and Jacobian-vector multiplications, which also consist of a complex double-loop structure. 
Recently,  \cite{dagreou2022framework} introduced a single-loop framework, named SOBA, to solve BO with single agent, i.e., $n=1$ in \eqref{DBO}. 
This framework approximates $z(x)$ without requiring heavy matrix-vector multiplications, thus saving effort in solving the linear system. Specifically, the SOBA algorithm maintains three sequences, which are updated as follows:
\begin{align*}
y^{k+1} = y^k - \beta_k D_y^k, 
v^{k+1} = v^k + \eta_k D_v^k,  
x^{k+1} = x^k - \alpha_k D_x^k,
\end{align*}
where $\alpha_k$, $\beta_k$ and $\eta_k$ are positive stepsizes, $D_y^k, D_v^k$ and $D_x^k$ are   respectively unbiased stochastic estimators of 
$\nabla_2f(x^k,y^k), \nabla_2F(x^k,y^k) - \nabla^2_{22}f(x^k,y^k)v^k$ and $\nabla_1F(x^k,y^k) - \nabla^2_{12}f(x^k,y^k)v^k$.
The SOBA framework was later extended by  \cite{liu2023averaged} to address the case where the lower-level problem is merely convex. \cite{Liu2021Nips} addressed the non-convexity of the lower-level function and proposed two techniques to tackle the resulting challenges.

\paragraph{Decentralized bilevel optimization}
In the past few years, a series of decentralized optimization algorithms have been proposed to solve \eqref{DBO}. 
The first algorithm for solving \eqref{DBO} was proposed in \cite{chen2022decentralized}. They introduced the DSBO algorithm, which integrates a decentralized approach to solve the linear system in \eqref{hg}. 
Subsequently, the same authors improved the DSBO algorithm by incorporating the moving average technique \cite{chen2023decentralized}.
Other works on DBO include \cite{YangGossipDBO2022}, in which a gossip-based DBO algorithm was proposed and its sample complexity was established, and \cite{pmlr-v206-gao23a}, in which momentum and variance-reduced techniques were employed to solve distributed stochastic BO problems.
All of these algorithms rely on inner-loop updates to assess the lower-level solution as well as the Hessian inverse, which demands extensive computational resources and leads to significant communication overhead, thus greatly reducing their practical applicability.
\cite{lu2022stochastic} proposed a stochastic linearized augmented Lagrangian method for solving \eqref{DBO}, which, however, also requires heavy matrix-vector multiplications when approximating the hypergradient. Recently, \cite{dong2023singleloop} extended the SOBA algorithm \cite{dagreou2022framework} to the decentralized setting and proposed the first single-loop decentralized algorithm, named SLDBO, for solving \eqref{DBO}. Similar to SOBA, SLDBO does not require heavy matrix-vector multiplications when approximating the hypergradient. Additionally, SLDBO imposes no assumptions regarding gradient heterogeneity or the boundedness of $\{v_k\}$. \cite{KongDSOBA2024} introduced a decentralized SOBA with the moving average technique and established its transient iteration complexity.
Similarly, \cite{ZhangDBO2023} employed variance reduction and gradient tracking techniques to address the stochastic DBO problem. However, the consensus error was not taken into account in the convergence analysis.
We point out that these single-loop algorithms still need to calculate gradients, Jacobian and Hessian matrices in each iteration, which can be time consuming for large-scale problems. 

In this paper, we incorporate the snapshot gradient tracking (SSGT) technique \cite{Song2021OptimalGT} -- a recent technique for decentralized optimization -- to SLDBO and demonstrate that the new algorithm \ssldbo~ is much more efficient than SLDBO.  
Unlike traditional methods that track the average gradient of decision variables, the SSGT tracks the average gradient of a snapshot point incorporating historical information.
This technique employs intermittent gradient, Jacobian, and Hessian computations, offering a more effective approach to utilizing computational resources.

\subsection{Main results} 
Contributions of this paper are summarized as follows. 
\begin{itemize}
\item We propose a novel single-loop algorithm named Snapshot-SLDBO (\ssldbo), which integrates the snapshot idea from SSGT \cite{Song2021OptimalGT} into SLDBO \cite{dong2023singleloop} and intermittently skips the computations of gradients, Jacobians and Hessians, thus significantly reducing the overall computational cost.

\item  We analyze the iteration complexity of \ssldbo, demonstrating that it achieves $\mathcal{O}(1/K)$ convergence rate, where $K$ is the iteration counter. Moreover, we establish a high probability non-ergodic convergence result for \ssldbo.

\item 
{We validate the proposed method on hyperparameter optimization, data hyper-cleaning, and decentralized meta-learning tasks. The results demonstrate improved time efficiency and competitive learning performance in networked AI scenarios.
Additionally, we also study the numerical impact of network topology and data heterogeneity on the performance of \ssldbo.}
\end{itemize}

While \ssldbo\, builds upon the foundational ideas of SLDBO and SSGT, their integration into a coherent and provably convergent algorithm presents non-trivial challenges. The primary difficulty lies in adapting the snapshot mechanism—originally conceived for single-level optimization—to the more intricate, three-sequence dynamic ($x, y, v$) inherent in bilevel algorithms like SLDBO. Our core contribution is therefore not merely the algorithmic proposal, but the rigorous convergence analysis that validates this complex integration. We demonstrate that despite intermittent computations across coupled sequences, the algorithm maintains a solid theoretical convergence guarantee, a key step toward making DBO more practical for large-scale problems. We also note that our current analysis is focused on the deterministic setting, where full gradients are accessible; extending this framework to the stochastic realm remains an important direction for future investigation.

\subsection{Organization}
The remainder of this paper is organized as follows.
In Section \ref{Preliminaries}, we introduce the notation, assumptions, and essential preliminaries which will be utilized in subsequent analysis.
In Section \ref{sec_algorithm}, we propose the \ssldbo\ algorithm and analyze its convergence.
{Numerical results on hyperparameter optimization, data hyper-cleaning, and decentralized meta-learning are presented in Section~\ref{sec_numeical} to demonstrate the effectiveness of S$^3$LDBO.}
Finally,   concluding remarks are drawn in Section \ref{sec_conclude}.

\section{Preliminaries}\label{Preliminaries}
We denote the optimal value of \eqref{DBO} as $F^*$. The gradients of $f$ with respect to $x$ and $y$ are represented by $\nabla_1f(x,y)$ and $\nabla_2 f(x,y)$ respectively, and the Jacobian matrix of $\nabla_1f$ and Hessian matrix of $f$ with respect to $y$ are denoted as $\nabla^2_{12}f(x,y)$ and $\nabla^2_{22}f(x,y)$ respectively.
Unless otherwise specified, $\|\cdot\|$ refers to the $\ell_2$ norm for vectors and the Frobenius norm for matrices. The operator norm of a matrix $Z$ is denoted by $\|Z\|_{\textrm{op}}$.
We also define the projection operator $\mathcal{P}_{r}$, which projects a given point onto a Euclidean ball with radius $r\geq 0$, i.e.,
$$\mathcal{P}_{r}[z]:= \arg\min\nolimits_{\|z'\|\leq r}\|z'-z\|=\min \left\{1, r/\|z\|\right\} z.$$

Throughout this paper, we adhere to the following standard assumptions in the literature of bilevel optimization and decentralized optimization problems.
\begin{assumption}\label{Assump1}
	The following assumptions hold for functions $F$, $f$, all $F_i$ and $f_i$ in \eqref{DBO}.
	\begin{enumerate}
		\item For any fixed $x$, $ f(x,\cdot)$ is $\sigma$-strongly convex, with $\sigma>0$ being a constant.
		
		\item The function $F_i$ is differentiable and $\nabla F_i$ is Lipschitz continuous with Lipschitz constant $L_{F,1}$. 
        \item {For all $x$, there exists a constant $L_{F,0}$ such that $\|\nabla_2 F(x, y^*(x))\| \leq L_{F,0}$.}
		
		\item The function $f_i$ is twice differentiable, and its gradient $\nabla f_i$ is Lipschitz continuous with Lipschitz constant $L_{f,1}$. Moreover, $\nabla_{12}^2 f_i(x,y)$ and $\nabla_{22}^2 f_i(x,y)$ are also Lipschitz continuous with Lipschitz constant $L_{f,2}$.

	\end{enumerate}
\end{assumption}

\begin{assumption}[Network topology]\label{Assump2}
	Suppose the communication network is represented by a nonnegative weight matrix $W = (w_{ij})\in \mathbb{R}^{n\times n}$,
	where $w_{ij}=0$ if $i\neq j$ and nodes $i$ and $j$ are not connected.
	Moreover, we assume that $W$ is symmetric and doubly stochastic, i.e. $W = W^{\mathsf{T}}$ and $W\mathbf{1_n} = \mathbf{1_n}$, where $\mathbf{1_n}$ is the all-one vector in $ \mathbb{R}^{n}$.
	Furthermore, the eigenvalues of $W$ satisfy $1 = \lambda_1> \lambda_2\geq \cdots \geq \lambda_n$ and $\rho := \max\{|\lambda_2|, |\lambda_n|\} < 1$.
\end{assumption}

The following results are instrumental in our convergence analysis. Lemma \ref{con_sqr} is a well-known result in decentralized optimization, as noted in \cite{pu2021distributed}. Lemma \ref{proj}, adopted from Lemma 3.2 in \cite{choi2023convergence}, elucidates the relationship between the consensus error before and after the projection.

\begin{lemma}\label{con_sqr}
	Consider the mixing matrix $W=(w_{ij})\in \mathbb{R}^{n\times n}$ defined in Assumption \ref{Assump2}, for any $x_1, \ldots, x_n \in \mathbb{R}^{d}$, let $\x = \frac{1}{n}\sum_{i=1}^{n}x_i$, we have
	\begin{enumerate}
		\item $	\sum\limits_{i=1}^n\big\|\sum\limits_{j=1}^n w_{i j}x_j\big\|^2\leq  \sum\limits_{j=1}^n\left\|x_j\right\|^2$, and 
		\item $\sum\limits_{i=1}^n\big\|\sum\limits_{j=1}^n w_{i j}\left(x_j-\x\right)\big\|^2\leq \rho^2 \sum\limits_{i=1}^n\left\|x_i-\x\right\|^2.$
	\end{enumerate}
\end{lemma}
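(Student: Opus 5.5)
The plan is to write everything in stacked matrix form and read both inequalities off the spectral structure of $W$ from Assumption \ref{Assump2}. Let $X\in\mathbb{R}^{n\times d}$ have rows $x_i^{\mathsf{T}}$, and set $J=\frac{1}{n}\mathbf{1_n}\mathbf{1_n}^{\mathsf{T}}$, so that $JX$ has every row equal to $\x^{\mathsf{T}}$. With this notation,
\begin{align*}
\sum_{i=1}^n\Big\|\sum_{j=1}^n w_{ij}x_j\Big\|^2=\|WX\|^2,\qquad
\sum_{i=1}^n\Big\|\sum_{j=1}^n w_{ij}(x_j-\x)\Big\|^2=\|W(I-J)X\|^2 .
\end{align*}
Here $\|\cdot\|$ is the Frobenius norm. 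The right-hand sides of the two claims are $\|X\|^2$ and $\rho^2\|(I-J)X\|^2$.

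The key tool is the bound $\|AY\|^2\le \|A\|_{\textrm{op}}^2\|Y\|^2$. To justify it, apply the operator-norm bound to each column of $Y$ and sum over the columns.

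For part 1, $W$ is symmetric, so its operator norm equals $\max_k|\lambda_k|$. Assumption \ref{Assump2} gives $\lambda_1=1$ and $|\lambda_k|\le\rho<1$ for $k\ge2$, hence $\|W\|_{\textrm{op}}=1$ and $\|WX\|^2\le\|X\|^2$.

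For part 2, note first that $J$ commutes with $W$, since $WJ=JW=J$ by double stochasticity and symmetry. Therefore $W(I-J)=(W-J)(I-J)$. Take an orthonormal eigenbasis of $W$ with first vector $\mathbf{1_n}/\sqrt n$. In this basis $W-J$ has eigenvalues $0,\lambda_2,\dots,\lambda_n$, so $\|W-J\|_{\textrm{op}}=\rho$. Applying the tool with $A=W-J$ and $Y=(I-J)X$ gives $\|W(I-J)X\|^2\le\rho^2\|(I-J)X\|^2$, which is exactly the claim.

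The only step that needs care is the commutation identity $W(I-J)=(W-J)(I-J)$. It follows from $J^2=J$ and $WJ=J$. There is no real obstacle beyond this.
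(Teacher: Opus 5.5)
Your proof is correct and follows the standard route: the paper does not prove this lemma itself but cites it as well known from \cite{pu2021distributed}, where it is obtained exactly as you do, from the factorization $W(I-J)=(W-J)(I-J)$ together with $\|W-J\|_{\textrm{op}}=\rho$ (and $\|W\|_{\textrm{op}}=1$ for part 1). The steps you flag all hold: the identity $W(I-J)=(W-J)(I-J)$ from $WJ=J$ and $J^2=J$, the choice of an orthonormal eigenbasis containing $\mathbf{1_n}/\sqrt{n}$, and the column-wise bound $\|AY\|^2\le\|A\|_{\textrm{op}}^2\|Y\|^2$.
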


\begin{lemma}\label{proj}
	For any $x_1, \ldots, x_n \in \mathbb{R}^{d}$ and $r\geq 0$, we have $$\sum_{i=1}^n\left\|\mathcal{P}_r\left[x_i\right]-\frac{1}{n} \sum_{j=1}^n \mathcal{P}_r\left[x_j\right]\right\|^2 \leq \sum_{i=1}^n\left\|x_i-\frac{1}{n} \sum_{j=1}^n x_j\right\|^2.
	$$
\end{lemma}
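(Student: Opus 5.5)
Writing $p_i:=\mathcal{P}_r[x_i]$, $\bar p:=\frac1n\sum_j p_j$ and $\x=\frac1n\sum_j x_j$, the plan is to turn both sides of Lemma \ref{proj} into averages of pairwise squared distances and then use that projection onto a convex set is nonexpansive. The argument has three steps.

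\textbf{Step 1 (pairwise form of the consensus error).} For any vectors $u_1,\dots,u_n$ with mean $\bar u$, expanding the squares gives
\begin{equation*}
\sum_{i=1}^n\|u_i-\bar u\|^2=\frac{1}{2n}\sum_{i=1}^n\sum_{j=1}^n\|u_i-u_j\|^2 .
\end{equation*}
Indeed, $\sum_{i,j}\|u_i-u_j\|^2 = 2n\sum_i\|u_i\|^2-2\|\sum_i u_i\|^2$, and $\sum_i\|u_i-\bar u\|^2=\sum_i\|u_i\|^2-n\|\bar u\|^2$. Multiplying the second identity by $2n$ gives exactly the first expression.

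\textbf{Step 2 (nonexpansiveness).} The ball $\{z:\|z\|\le r\}$ is closed and convex, so its Euclidean projection is $1$-Lipschitz: $\|\mathcal{P}_r[a]-\mathcal{P}_r[b]\|\le\|a-b\|$. To prove this, use the variational inequalities $\langle a-\mathcal{P}_r[a],\,z-\mathcal{P}_r[a]\rangle\le 0$ for all $z$ in the ball.
\begin{itemize}
\item Take $z=\mathcal{P}_r[b]$ in the inequality for $a$.
\item Take $z=\mathcal{P}_r[a]$ in the corresponding inequality for $b$.
\item Add the two inequalities to get $\|\mathcal{P}_r[a]-\mathcal{P}_r[b]\|^2\le\langle a-b,\mathcal{P}_r[a]-\mathcal{P}_r[b]\rangle$, then apply Cauchy--Schwarz.
\end{itemize}
The case $r=0$ is trivial, since every $p_i=0$ and the left-hand side vanishes.

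\textbf{Step 3 (combination).} Applying Step 1 to the $p_i$, then Step 2 to each pair $(x_i,x_j)$, then Step 1 again to the $x_i$:
\begin{equation*}
\sum_{i=1}^n\|p_i-\bar p\|^2=\frac{1}{2n}\sum_{i,j}\|p_i-p_j\|^2\le\frac{1}{2n}\sum_{i,j}\|x_i-x_j\|^2=\sum_{i=1}^n\|x_i-\x\|^2 .
\end{equation*}
This is the claim.

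\textbf{Main obstacle.} There is no real obstacle. The only point needing care is to avoid comparing $\|p_i-\bar p\|$ with $\|x_i-\x\|$ term by term: $\bar p$ is generally not $\mathcal{P}_r[\x]$, so no per-term bound is available. The pairwise identity in Step 1 is what removes the means from the comparison.
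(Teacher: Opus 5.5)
Your proof is correct and complete. The pairwise identity in Step 1 checks out, the variational-inequality derivation of nonexpansiveness in Step 2 is standard, and the chain in Step 3 closes the argument. The paper does not prove this lemma at all; it imports it as Lemma 3.2 of \cite{choi2023convergence}. Your argument therefore makes the appendix self-contained rather than reproducing anything in the paper.

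A shorter route exists, and it shows that your ``main obstacle'' remark is somewhat overstated. Use the least-squares property of the mean: for every $c$,
\[
\sum_{i=1}^n\|p_i-c\|^2=\sum_{i=1}^n\|p_i-\bar p\|^2+n\|\bar p-c\|^2 .
\]
Taking $c=\mathcal{P}_r[\bar{x}]$ in this identity and then applying nonexpansiveness to each term gives
\[
\sum_{i=1}^n\|p_i-\bar p\|^2\le\sum_{i=1}^n\|p_i-\mathcal{P}_r[\bar{x}]\|^2\le\sum_{i=1}^n\|x_i-\bar{x}\|^2 .
\]
You are right that $\bar p\neq\mathcal{P}_r[\bar{x}]$ in general. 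Even so, a term-by-term comparison does become available once the mean is replaced by this reference point.

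Both arguments use only the $1$-Lipschitz property of $\mathcal{P}_r$. The lemma therefore holds for projection onto any closed convex set, and more generally for any nonexpansive map applied identically at every node. Your pairwise version is symmetric and avoids choosing a reference point. The reference-point version is one line shorter and needs no expansion of double sums.
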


\section{Our \ssldbo~Algorithm}\label{sec_algorithm}

In this section, we propose the Snapshot-SLDBO algorithm (\ssldbo), which improves the SLDBO algorithm \cite{dong2023singleloop} with the idea of SSGT \cite{Song2021OptimalGT}. We start with discussing related algorithms and explaining the derivation of \ssldbo.

\subsection{Algorithm development}\label{review_algorithm}
The SSGT algorithm \cite{Song2021OptimalGT} is designed for solving the following single-level decentralized problem:
$$
\min\nolimits_{x} h(x) = \frac{1}{n}\sum\nolimits_{i=1}^n h_i(x),
$$
where $x$ is the global decision variable and each $h_i$ is a smooth and strongly convex function. 
Let $\{(\xi^k,\zeta^k):  k=0,1,2,\ldots\}$ be a sequence of two-point random variables with  $\xi^k \sim Bernoulli(p)$ and $\zeta^k \sim Bernoulli(l)/l$, where $p,l\in(0,1)$. 
With the initial values $(y^0_i,z^0_i,u^0_i,s_i^0)$ and $g_i^0 = \nabla h_i(s_i^0)$, $i=1,\ldots,n$, 
SSGT iterates for $k=0,1,2,\ldots$ as 
\begin{align*}
	x_i^{k} & = (1-\alpha-\tau) y_i^{k} + \alpha z_i^{k} + \tau u_i^{k}, \\
	z_i^{k+1} & = \frac{1}{1+\beta} \sum\nolimits_{j=1}^n w_{ij} \biggl\{z_j^{k} + \beta x_j^{k}  \\
    &\quad \;- \eta \Bigl[g_j^{k} + \zeta^{k} \left(\nabla h_j(x_j^{k}) - \nabla h_j(s_j^{k})\right)\Bigl]\biggl\},\\
	 y_i^{k+1} & = x_i^{k} + \gamma \left(z_i^{k+1} - z_i^{k}\right), \\
	s_i^{k+1} & = \xi^{k} x_i^{k} + (1-\xi^{k}) s_i^{k}, \\
	u_i^{k+1} & = \sum\nolimits_{j=1}^n w_{ij} \left(\xi^{k} x_j^{k} + (1-\xi^{k}) u_j^{k}\right), \\
	g_i^{k+1} & = \sum\nolimits_{j=1}^n w_{ij} g_j^{k} + \xi^{k} \left(\nabla h_i(x_i^{k}) - \nabla h_i(s_i^{k})\right),
\end{align*}
where $\alpha$, $\beta$, $\gamma$, $\eta$ and $\tau$ are positive constants. 
In other gradient-tracking based algorithms, when $\nabla h_i(x_i^{k+1}) - \nabla h_i(x_i^k)$ is large, a small stepsize is taken to control the consensus error, ultimately resulting in suboptimal convergence rates. The introduction of the snapshot point $s_i^k$ is designed to overcome this limitation and mitigate its impact on convergence rates. Meanwhile, the update of $x_i^k$ is expressed as a linear combination of three other variables $y_i^k, z_i^k$, and $u_i^k$, ensuring that the distance between $x_i^k$ and $s_i^k$ is not too large, and $\nabla h_i(x_i^k) - \nabla h_i(s_i^k)$ is not too large. 
Most importantly, the SSGT algorithm skips the gradient computation when $\xi^k = \zeta^k = 0$, which greatly reduces the computational burden.

The SLDBO algorithm \cite{dong2023singleloop} for solving \eqref{DBO} updates in the $k$th iteration as follows:
\begin{subequations}
	\begin{align}
		&t_{y, i}^{k+1}=\sum\nolimits_{j=1}^n w_{i j} t_{y, j}^{k}+d_{y,i}^{k+1}-d_{y, i}^{k}, 
		\\
		&y_i^{k+1}=\sum\nolimits_{j=1}^n w_{i j} (y_j^k-\beta  t_{y, j}^k),  \label{sldbo_y}\\
		&t_{v, i}^{k+1}=\sum\nolimits_{j=1}^n w_{i j} t_{v, j}^{k}+d_{v,i}^{k+1}-d_{v, i}^{k},
		\\
		&v_i^{k+1}=\mathcal{P}_{r_v}\left[\sum\nolimits_{j=1}^n w_{i j} 
		(v_j^k+\eta  t_{v, j}^k)\right],  \label{sldbo_v}\\
		&t_{x, i}^{k+1}=\sum\nolimits_{j=1}^n w_{i j}t_{x, j}^{k}+d_{x,i}^{k+1}-d_{x, i}^{k},
		\\ 
		&x_i^{k+1}=\sum\nolimits_{j=1}^n w_{i j} (x_j^k-\alpha t_{x, j}^k),   \label{sldbo_x}
	\end{align}
\end{subequations}
where  
%
$r_v:= L_{F,0}/\sigma$, and
$d_{y,i}^k$, $d_{v,i}^k$ and $d_{x,i}^k$ are defined in \eqref{yd}-\eqref{xd}, respectively.
In each iteration, the SLDBO algorithm needs to evaluate one gradient, one Jacobian, and one Hessian of the lower-level
function, and two gradients of the upper-level function. 
This brings heavy computational burden for large-scale problems.
To alleviate the burden caused by computing gradients, Jacobians and Hessians in each iteration, we propose the \ssldbo~algorithm, which skips these computations intermittently using the SSGT idea. 
The details of our \ssldbo~algorithm are described in Algorithm \ref{alg:sslDBo}.
Inspired by SSGT, we also introduce a stochastic variable $\xi^k$ in the $k$th iteration. 
It should be noted that whenever the random variable $\xi^k=0$, the algorithm does not compute any gradients, Jacobians, and Hessians.
This leads to substantial savings in computational time when dealing with large-scale problems.
In particular, as shown in \cite{dong2023singleloop}, the SLDBO algorithm computes three gradients, one Jacobian, and one Hessian in each iteration. Assuming that the amount of CPU time required for these computations is $\mathcal{T}$ in each iteration, this totals $\mathcal{T}K$ for $K$ iterations. In contrast, \ssldbo\ only needs an expected computational cost of $p\mathcal{T}K$ for the same number of iterations, where $p\in(0,1)$. This significantly reduces computational time. 

\begin{algorithm}[!htpb]
\caption{A Snapshot Single-Loop Algorithm for DBO (\ssldbo)}\label{alg:sslDBo}
\begin{algorithmic}[1]
	\REQUIRE   
	Let $K$ be the maximum iteration number and $p\in (0,1)$. 
	Initialize  $t_{x,i}^{0}=t_{y,i}^{0}=t_{v,i}^{0}=d_{\tx,i}^{0}=d_{\ty,i}^{0}=d_{\tv,i}^{0}=0$ for all $i$.     
	Given  $\{(x^{0}_i, y^{0}_i, v^{0}_i): i=1,2,\ldots,n\}$, satisfying  
	$\|v^{0}_i\|\leq r_v := L_{F,0}/\sigma$ for all $i$.       
	Let $\alpha, \beta, \eta >0$, which satisfy the upper bound conditions in \eqref{stepsize_bound}.
	\FOR {$k=0,1,\ldots,K-1$}  
	\STATE Sample $\xi^k \sim Bernoulli(p)$ 
	\FOR {$i=1,\ldots,n$}   
	\IF {$\xi^k =1$}
    \STATE \#\;{Computation.}
	\begin{align} 
		\hspace{-1em} d_{y,i}^k &=\nabla_2 f_i(x_i^k,y_i^k), 
		\label{yd} \\
		\hspace{-1em} d_{v,i}^k &=\nabla_2 F_i(x_i^k,y_i^k) - \nabla^2_{22} f_i(x_i^k,y_i^k) v_i^k, \label{vd} \\
		\hspace{-1em} d_{x,i}^k &=\nabla_1 F_i(x_i^k,y_i^k) - \nabla^2_{12} f_i(x_i^k,y_i^k) v_i^k, \label{xd}
	\end{align}  
    \STATE \#\;{Communication.}
    {\small
        \begin{align*}
		\hspace{-3.25em} 
        y_i^{k+1} &=\sum\nolimits_{j=1}^n w_{i j} \Big(y_j^k-\beta \big(t_{y,j}^k + \frac{1}{p}(d_{y,j}^k - d_{\ty,j}^k)\big)\Big),\\ 
        \hspace{-3.25em} 
        v_i^{k+1} &=\mathcal{P}_{r_v}\left\{\sum\nolimits_{j=1}^n w_{i j} \Big[v_j^k+\eta\big(t_{v,j}^k + \frac{1}{p}(d_{v,j}^k - d_{\tv,j}^k)\big)\Big]\right\},\\
        \hspace{-3.25em} 
        x_i^{k+1} &=\sum\nolimits_{j=1}^n w_{i j} \Big(x_j^k-\alpha \big(t_{x,j}^k + \frac{1}{p}(d_{x,j}^k - d_{\tx,j}^k)\big)\Big),\\\hspace{-3.25em} t_{y,i}^{k+1}&=\sum\nolimits_{j=1}^n w_{i j}(t_{y, j}^{k}+ d_{y,j}^{k}-d_{\tilde{y}, j}^{k}),\\
        \hspace{-3.25em} t_{v,i}^{k+1}&=\sum\nolimits_{j=1}^n w_{i j} (t_{v, j}^{k}+ d_{v,j}^{k}-d_{\tilde{v}, j}^{k}),\\
        \hspace{-3.25em} t_{x,i}^{k+1}&=\sum\nolimits_{j=1}^n w_{i j} (t_{x, j}^{k}+ d_{x,j}^{k}-d_{\tilde{x}, j}^{k}),
		\end{align*}
        }
	\STATE
	$\hspace{-.5em} d_{\ty,i}^{k+1} = d_{y,i}^{k}, \quad 
	d_{\tv,i}^{k+1} = d_{v,i}^{k}, \quad 
	d_{\tx,i}^{k+1} = d_{x,i}^{k}.$
	\ELSE 
    \STATE  \#\;{No computation of gradient, Jacobian and Hessian matrices.}
	\begin{align*}
	\hspace{-.5em}	
    y_i^{k+1}&=\sum\nolimits_{j=1}^n w_{i j} (y_j^k-\beta  t_{y, j}^k),\\
    \hspace{-.5em} v_i^{k+1}&=\mathcal{P}_{r_v}\left[\sum\nolimits_{j=1}^n w_{i j} (v_j^k+\eta  t_{v, j}^k)\right],\\
	\hspace{-.5em}  
    x_i^{k+1}&=\sum\nolimits_{j=1}^n w_{i j} (x_j^k-\alpha  t_{x, j}^k),\\
    \hspace{-.5em}    
    t_{y, i}^{k+1}&=\sum\nolimits_{j=1}^n w_{i j} t_{y, j}^{k}, t_{v, i}^{k+1}=\sum\nolimits_{j=1}^n w_{i j}t_{v, j}^{k},\\            
    \hspace{-.5em}   
    t_{x, i}^{k+1} &=\sum\nolimits_{j=1}^n w_{i j} t_{x, j}^{k},
	\end{align*}
	\STATE
	$
    \hspace{-.5em} 
	d_{\ty,i}^{k+1} = d_{\ty,i}^{k}, \quad 
	d_{\tv,i}^{k+1} = d_{\tv,i}^{k}, \quad
	d_{\tx,i}^{k+1} = d_{\tx,i}^{k}.
	$
	\ENDIF
	\ENDFOR
	\ENDFOR
	\ENSURE $\{(x_i^K,y_i^K,v_i^K): i=1,2,\ldots,n\}$. 
\end{algorithmic}
\end{algorithm}

\subsection{Convergence results}\label{convergence_results}
Next, we present convergence results of \ssldbo.  
First, we recall the definition of a stationary point as presented in \cite{xyy2024siamopt}.

\begin{definition}
For any $\varepsilon > 0$, a random vector $(\mathbf{x}, \mathbf{y})$, with $\mathbf{x} = [x_1; x_2; \ldots; x_n]^{\top}$ and $\mathbf{y} = [y_1; y_2; \ldots; y_n]^{\top}$,  is called a stochastic $\varepsilon$-stationary point in expectation of \eqref{DBO}
if
$$
\E{\norm{\nabla \Phi(\x)} + \mean \norm{x_i - \x} + \mean \norm{y_i - \y}} \leq \varepsilon,
$$
where 
$\x := \frac{1}{n}\sum_{i=1}^{n}x_i$ and  $\y := \frac{1}{n}\sum_{i=1}^{n}y_i$. 
\end{definition}
Our primary convergence rate results for Algorithm \ref{alg:sslDBo} are summarized in Theorem \ref{cr} and Theorem \ref{cp}, and their proofs are postponed to the Appendix \ref{appendix}.
\begin{theorem}[\textbf{Convergence in expectation}]\label{cr}
For any $k\geq 0$, define $\x^k = \frac{1}{n}\sum_{i=1}^{n}x^k_i$, $\y^k = \frac{1}{n}\sum_{i=1}^{n}y^k_i$ and $\v^k = \frac{1}{n}\sum_{i=1}^{n}v^k_i$.
For any integer $K\geq 1$, the following convergence rate results hold for Algorithm \ref{alg:sslDBo}.
\begin{enumerate}
	\item{Consensus Error.} 
    We have
    {
	\begin{align*}
		\E{\frac{1}{nK}\sum\nolimits_{k=0}^{K-1}\sum\nolimits_{i=1}^{n}\|x^k_i-\x^k\|^2}&=\mathcal{O}\left(\frac{1}{K}\right), \\	
		\E{\frac{1}{nK}\sum\nolimits_{k=0}^{K-1}\sum\nolimits_{i=1}^{n}\|y^k_i-\y^k\|^2}&=\mathcal{O}\left(\frac{1}{K}\right), \\
		\E{\frac{1}{nK}\sum\nolimits_{k=0}^{K-1}\sum\nolimits_{i=1}^{n}\|v^k_i-\v^k\|^2}&=\mathcal{O}\left(\frac{1}{K}\right).
	\end{align*}	
    }
	\item{Stationarity.} 
    There holds
    {
	$$\E{\frac{1}{K}\sum\nolimits_{k=0}^{K-1}\|\nabla \Phi(\x^k)\|^2} = \mathcal{O}\left(\frac{1}{K}\right).$$
    }
\end{enumerate}
\end{theorem}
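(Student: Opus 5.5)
We will follow the Lyapunov-function approach used for SLDBO in \cite{dong2023singleloop}, augmented with extra error terms that measure the staleness of the snapshot quantities $d_{\tx,i}^k,d_{\ty,i}^k,d_{\tv,i}^k$. We first rewrite Algorithm \ref{alg:sslDBo} in stacked form. Define the unbiased estimator $g_y^k := t_y^k + \frac{\xi^k}{p}(d_y^k - d_{\ty}^k)$, and analogously $g_v^k$ and $g_x^k$.

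The first step is a tracking identity. Since $W$ is doubly stochastic and all trackers start at zero, induction gives $\bar t_y^{k} = \frac1n\sum_i d_{\ty,i}^{k}$. Consequently
$$\mathbb{E}_k[\bar g_y^k] = \frac1n\sum_i d_{y,i}^k,$$
and the same holds for $v$ and $x$. Thus the averaged iterates of $x$ and $y$ follow inexact gradient steps whose directions are correct in conditional expectation. The $v$-iterate needs more care because of the projection; here we use that $\|v\|\le r_v$ already contains $z(\x)$, exactly as in SLDBO.

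Next we derive one-step recursions, in conditional expectation $\mathbb{E}_k$, for the following quantities:
\begin{enumerate}
\item the descent of $\Phi(\x^k)$, using $L$-smoothness of $\Phi$ and the bound $\|\bar d_x^k - \nabla\Phi(\x^k)\|^2 \lesssim \|\y^k-y^*(\x^k)\|^2 + \|\v^k - z^*(\x^k)\|^2 + $ consensus errors, where $z^*(x):=[\nabla^2_{22} f]^{-1}\nabla_2 F$;
\item the contraction of $\|\y^k - y^*(\x^k)\|^2$, via strong convexity with a drift term $\propto \alpha^2\|\bar g_x^k\|^2$ coming from the Lipschitz continuity of $y^*$;
\item the contraction of $\|\v^k - z^*(\x^k)\|^2$, via strong convexity of the quadratic in $v$ and Lemma \ref{proj} for the projection;
\item the consensus errors of $x,y,v$ and of the trackers $t_x,t_y,t_v$, via Lemma \ref{con_sqr}, each contracting at rate $\rho^2$ up to stepsize-weighted perturbations;
\item the snapshot errors $\frac1n\sum_i\|d_{\cdot,i}^k - d_{\tilde\cdot,i}^k\|^2$. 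With probability $p$ these reset to zero, so in expectation each contracts by a factor $(1-p)$ plus a term $\propto \|x^{k+1}-x^k\|^2+\|y^{k+1}-y^k\|^2+\|v^{k+1}-v^k\|^2$, which is controlled by Lipschitz continuity of the derivatives on the bounded $v$-ball.
\end{enumerate}
The variance of $g^k$ is at most $\frac{1-p}{p}\|d^k-d_{\tilde\cdot}^k\|^2$, so it is absorbed by the snapshot-error terms.

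We then form a Lyapunov function
$$\mathcal L^k = \Phi(\x^k) + c_1\|\y^k-y^*(\x^k)\|^2 + c_2\|\v^k-z^*(\x^k)\|^2 + \sum_\bullet c_\bullet\,(\text{consensus errors}) + \sum_\bullet c'_\bullet\,(\text{snapshot errors}).$$
We choose the constants $c$ and the stepsizes under \eqref{stepsize_bound}, in the order $\beta,\eta$ relative to $\sigma$, then $\alpha\ll\beta,\eta$, with all of them small compared with $(1-\rho)^2$ and $p$. The goal is
$$\mathbb{E}[\mathcal L^{k+1}] \le \mathbb{E}[\mathcal L^k] - c\big(\|\nabla\Phi(\x^k)\|^2 + \text{consensus errors}\big).$$
Telescoping from $0$ to $K-1$ and using that $\mathcal L^k$ is bounded below by $F^*$ then gives both claims at rate $\mathcal O(1/K)$.

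The main obstacle is the coupled perturbation bookkeeping. The snapshot error feeds into the tracker consensus error, which feeds into the iterate differences, which in turn feed back into the snapshot error. Closing this loop requires a small-gain condition. We intend to handle it by first bounding $\|x^{k+1}-x^k\|^2$ by stepsize-weighted tracker and gradient terms, and then choosing $c'_\bullet\sim 1/p$ times those weights so that the $(1-p)$ contraction dominates. The second delicate point is that the projection breaks the exact tracking of $\v$; for this we rely on Lemma \ref{proj} and on the nonexpansiveness of $\mathcal P_{r_v}$ toward $z^*(\x)$.
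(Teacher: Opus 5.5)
Your outline is correct and is essentially the paper's own proof. The paper also telescopes a Lyapunov function built from $\Phi(\bar x^k)-F^*$, $\|\bar y^k-y^*(\bar x^k)\|^2$, $\|\bar v^k-v^*(\bar x^k)\|^2$, the consensus errors of $x,y,v$ and of the tracked directions, and snapshot-staleness terms. It uses the same ingredients you list: the identity $\bar t_x^k=\frac1n\sum_i d_{\tilde x,i}^k$, conditional unbiasedness, the variance bound $\frac{1-p}{p}\sum_i\|d_{x,i}^k-d_{\tilde x,i}^k\|^2$, Lemma~\ref{proj}, and nonexpansiveness of $\mathcal{P}_{r_v}$ toward $v^*(\bar x^k)$, which lies in the ball.

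The differences are only bookkeeping. The paper measures staleness in iterate space ($\|x_i^k-\tilde x_i^k\|^2$ etc., converted to derivative gaps by Lipschitz bounds) rather than your derivative space, and it tracks consensus of $z^k=t^k+\xi^k(d^k-d^k_{\tilde{\cdot}})/p$ rather than $t^k$. Its fixed Young split gives a $(1-p)/\rho$ contraction, which forces the extra assumption $p>1-\rho$. Your derivative-space recursion, with a Young parameter of order $p$, avoids that restriction. It yields a factor like $1-p/2$, not literally $1-p$, and you must keep the $O(1)$ consensus term of $\|x^{k+1}-x^k\|^2$ in the small-gain accounting.
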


\begin{theorem}[\textbf{Convergence in probability}]\label{cp}
{Let $\{(x_i^k, y_i^k): i=1, \ldots, n\}_{k\geq 0}$ be the sequence generated by Algorithm \ref{alg:sslDBo}. Define $\x^k = \frac{1}{n}\sum_{i=1}^{n}x^k_i$ and $\y^k = \frac{1}{n}\sum_{i=1}^{n}y^k_i$ for $k\geq 0$, and set
\[
	R_k :=
	\norm{\nabla \Phi(\x^k)}
	+
	\mean\norm{x_i^k-\x^k}
	+
	\mean\norm{y_i^k-\y^k}.
\]
}
{Then, for any $\varepsilon>0$, there holds
\[
	\lim_{k\rightarrow+\infty}
	\mathrm{Prob}\{R_k\geq\varepsilon\}=0.
\]
}
\end{theorem}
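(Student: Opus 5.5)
The natural route is to prove that $\sum_{k\ge0}\mathbb{E}[R_k]<\infty$, or at least that $\mathbb{E}[R_k]\to 0$, and then apply Markov's inequality:
\[
\mathrm{Prob}\{R_k\ge\varepsilon\}\le \mathbb{E}[R_k]/\varepsilon .
\]
Theorem~\ref{cr} gives only an ergodic bound, $\frac1K\sum_{k<K}\mathbb{E}[R_k]=\mathcal{O}(1/K)$. The first step is therefore to return to the Lyapunov inequality that underlies Theorem~\ref{cr}. I expect that inequality to have the form
\[
\mathbb{E}[\mathcal{L}^{k+1}]\le \mathbb{E}[\mathcal{L}^{k}]-c\,\mathbb{E}[R_k+\mathcal{E}_k],
\]
with $c>0$ a constant determined by the stepsize conditions \eqref{stepsize_bound}. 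Here $\mathcal{L}^k$ is a merit function combining $\Phi(\x^k)$, $\|\y^k-y^*(\x^k)\|^2$, $\|\v^k-z(\x^k)\|^2$, the consensus errors of $x,y,v$, the tracking errors of $t_x,t_y,t_v$, and the snapshot discrepancies $\|d_{\cdot,i}^k-d_{\tilde\cdot,i}^k\|^2$ weighted by factors involving $1/p$. The term $\mathcal{E}_k$ collects the remaining nonnegative error terms.

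Since $\mathcal{L}^k$ is bounded below by $F^*$ (up to the nonnegative error terms), summing this inequality yields
\[
\sum_{k=0}^{\infty}\mathbb{E}[R_k]\le \bigl(\mathcal{L}^0-F^*\bigr)/c<\infty .
\]
Note that $R_k$ itself is nonnegative and involves only $\|\nabla\Phi(\x^k)\|^2$ and the consensus errors of $x$ and $y$, all of which are dominated by the decrease term. Summability forces $\mathbb{E}[R_k]\to0$, and Markov's inequality then gives $\lim_{k\to\infty}\mathrm{Prob}\{R_k\ge\varepsilon\}=0$ for every $\varepsilon>0$. In fact this argument gives more: $\sum_k \mathrm{Prob}\{R_k\ge\varepsilon\}<\infty$, so Borel--Cantelli yields $R_k\to0$ almost surely.

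The main obstacle is making sure the Lyapunov descent holds \emph{per iteration in expectation}, conditioned on the history through $k$ and averaged over $\xi^k$. It is not enough to have it only in a telescoped or aggregated form. In the case $\xi^k=0$ the tracking variables are merely mixed, so the decrease must come from conditional expectation. The key identity is
\[
\mathbb{E}_k\Bigl[\tfrac{\xi^k}{p}\bigl(d^k-d^k_{\tilde\cdot}\bigr)\Bigr]=d^k-d^k_{\tilde\cdot},
\]
which makes the expected direction equal to the SLDBO direction. The extra variance, of order $\frac{1-p}{p}\|d^k-d^k_{\tilde\cdot}\|^2$, must then be absorbed. This requires bounding $\|d^k-d^k_{\tilde\cdot}\|^2$ by the squared movement of $(x,y,v)$ since the last snapshot, using Lipschitz continuity from Assumption~\ref{Assump1} and the boundedness $\|v_i\|\le r_v$ guaranteed by the projection. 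The snapshot error then obeys a recursion that contracts by the factor $1-p$ in expectation, and including it in $\mathcal{L}^k$ with weight of order $1/p$ closes the argument.

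Given the per-step descent, which is exactly what the proof of Theorem~\ref{cr} should provide, the remaining steps (telescoping, summability, Markov's inequality) are routine.
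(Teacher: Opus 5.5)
Your proposal is correct and follows essentially the same route as the paper. The paper takes the per-iteration descent inequality \eqref{V}, which holds under \eqref{stepsize_bound} together with $p+\rho>1$, and keeps only the decrease terms in $\|\nabla\Phi(\bar{x}^k)\|^2$, $\frac{1}{n}\sum_i\|x_i^k-\bar{x}^k\|^2$ and $\frac{1}{n}\sum_i\|y_i^k-\bar{y}^k\|^2$; telescoping then gives $\sum_k\mathbb{E}[R_k]\le V_0/\min\{\alpha/2,A_4,A_5\}<\infty$, and Markov's inequality finishes the proof, exactly as you outline (your Borel--Cantelli remark giving $R_k\to 0$ almost surely is a valid strengthening that the paper does not state).
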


\section{Numerical experiments}\label{sec_numeical}
{
In this section, we evaluate the proposed S$^3$LDBO algorithm on three representative decentralized bilevel learning tasks, including hyperparameter optimization, data hyper-cleaning, and decentralized meta-learning. 
The first two tasks are conducted on synthetic data, MNIST, and corrupted Fashion-MNIST, while the meta-learning experiment is conducted on miniImageNet. 
These tasks cover both classical bilevel learning problems and neural-network-based multi-agent learning scenarios, and are used to assess the computational efficiency and learning performance of the proposed snapshot mechanism.

For hyperparameter optimization, we compare S$^3$LDBO with SLDBO~\cite{dong2023singleloop}, MA-DSBO~\cite{chen2023decentralized}, and SLAM~\cite{lu2022stochastic}. 
SLDBO is the most directly related single-loop DBO 
baseline, MA-DSBO is a representative double-loop method, and SLAM is a stochastic linearized augmented Lagrangian method for DBO. 
For data hyper-cleaning, we mainly compare S$^3$LDBO with SLDBO in order to isolate the effect of the snapshot mechanism against its closest single-loop counterpart. 
For decentralized meta-learning, we further include MAML~\cite{pmlr-v70-finn17a} and ANIL~\cite{ICLRRaghuRBV20} as representative meta-learning baselines. 
Other DBO algorithms are not included because they are designed for different stochastic settings, relying on different assumptions, or focusing on communication complexity rather than the local derivative-evaluation cost considered here. A major goal of the experiments is to isolate the computational savings achieved by the snapshot mechanism relative to the most direct single-loop counterpart, while also examining whether such savings translate into improved time efficiency in neural-network-based meta-learning.

Throughout all experiments, we set the number of nodes to $n=8$ and adopt a ring topology to model the decentralized communication network. 
The corresponding weight matrix $W=(w_{ij})\in\mathbb{R}^{n\times n}$ is defined as follows: for $i,j=1,\ldots,n$, $w_{ij}=w$ if $i=j$; $w_{ij}=(1-w)/2$ if $i=j\pm 1$ or $(i,j)\in\{(1,n),(n,1)\}$; and $w_{ij}=0$ otherwise. 
We set $w=0.4$, under which each node is connected to two neighboring nodes and the associated spectral constant is $\rho\approx 0.724$. 
Unless otherwise specified, all results are reported with respect to CPU time or the number of expensive local derivative evaluations, so as to directly reflect the computational savings brought by the snapshot mechanism.
}
All the experiments were performed within Python 3.8 running on a laptop with AMD Ryzen 5 3550H CPU at 2.1GHz with 16GB memory. We employed mpi4py \cite{dalcin2021mpi4py} for parallel computing.

\subsection{Hyperparameter optimization}
\subsubsection{Synthetic data}
We first carry out numerical experiments on the logistic regression problem with $\ell_2$ regularization.
Let $\psi(t) = \log (1+e^{-t})$ for $t\in\mathbb{R}$ and $d$ be the dimension of the data.
Following \cite{chen2023decentralized} and \cite{dong2023singleloop}, for each node $i$, we have
\begin{align*}
F_i(\lambda, \omega)&=\sum_{\left(x_e, y_e\right) \in \mathcal{D}_i^{\prime}} \psi\left(y_e x_e^{\top} \omega\right)
,\\
f_i(\lambda, \omega)&=\sum_{\left(x_e, y_e\right) \in \mathcal{D}_i} \psi\left(y_e x_e^{\top} \omega\right)+\frac{1}{2} \sum\nolimits_{j=1}^d e^{\lambda_j} \omega_j^2,
\end{align*}
where $\mathcal{D}_i$ and $\mathcal{D}_i^{\prime}$ denote the training and  testing datasets on node $i$, respectively.
We aim to determine the optimal hyperparameter $\lambda$ such that $\omega^*(\lambda)$ minimizes the logistic loss evaluated on the testing dataset.

We utilize synthetic heterogeneous data generated in the same manner as in \cite{chen2023decentralized} and \cite{dong2023singleloop}. 
Specifically, the data distribution of $x_e$ on node $i$ follows a normal distribution with mean $0$ and variance $i^2 \cdot r^2$, where $r$ is the heterogeneity rate. In our experiments, we set $r$ to $1$. For the corresponding response variable, we let $y_e=x_e^{\top} w+0.1 z$, where $z$ is sampled from the standard normal distribution.
The training and testing datasets used in all algorithms each consist of 20,000 samples. 
In our experiments, we compare \ssldbo\, with SLDBO \cite{dong2023singleloop}, MA-DSBO \cite{chen2023decentralized} and SLAM\cite{lu2022stochastic}. 
In SLDBO and \ssldbo, the parameter $r_v$ is set to 20, which can be chosen via grid search, 
and the values of both  ${\alpha}$ and ${\eta}$ are assigned as 0.5. Furthermore, we set ${\beta}$ to 1.2 and the maximum number of iterations is set to $10^3$, which are the same as in \cite{dong2023singleloop}.	
MA-DSBO utilizes two key parameters, namely $T$ and $N$. Here, $T$ stands for the number of iterations carried out in the inner loop, while $N$ represents the number of Hessian-inverse-gradient product iterations.
MA-DSBO requires sufficient inner-loop iterations to estimate the LL solution and the hypergradient accurately. 
SLAM has one key parameter $b$, which is associated with matrix-vector products used for estimating the inverse of
the Hessian.
In our experiments, we set $T=5$, $N=5$ and $b=5$.

To visualize the performance of the three algorithms being compared, we present the curves of train loss, test loss, classification accuracy, and hypergradient norm against CPU time (in seconds) in Fig. \ref{fig:d60_time} and Fig. \ref{fig:d300_time}, corresponding to $d = 60$ and $d = 300$ respectively.
As the dimension of the data increases, the computational demands for calculating the gradients, Jacobian, and Hessian matrices rise significantly.
The \ssldbo\ algorithm attains higher numerical efficiency due to its ability to skip certain computationally intensive steps, thereby enhancing the overall performance.
In this experiment, we tested two values of $p$, namely $p=0.3$ and $p=0.75$. 
These values were chosen to represent a low-frequency ($p=0.3$) versus a high-frequency ($p=0.75$) update regime. This allows us to empirically study the trade-off between per-iteration cost and convergence speed discussed.
It can be seen that $p = 0.3$ yields a better performance compared to $p = 0.75$.
Furthermore, by comparing Fig. \ref{fig:d60_time} and Fig. \ref{fig:d300_time}, it can be seen that the advantage of setting $p = 0.3$ becomes clearer for problems with larger dimensions compared to when $p = 0.75$. 

\begin{figure}[!t]
\centering
\includegraphics[width=\linewidth]{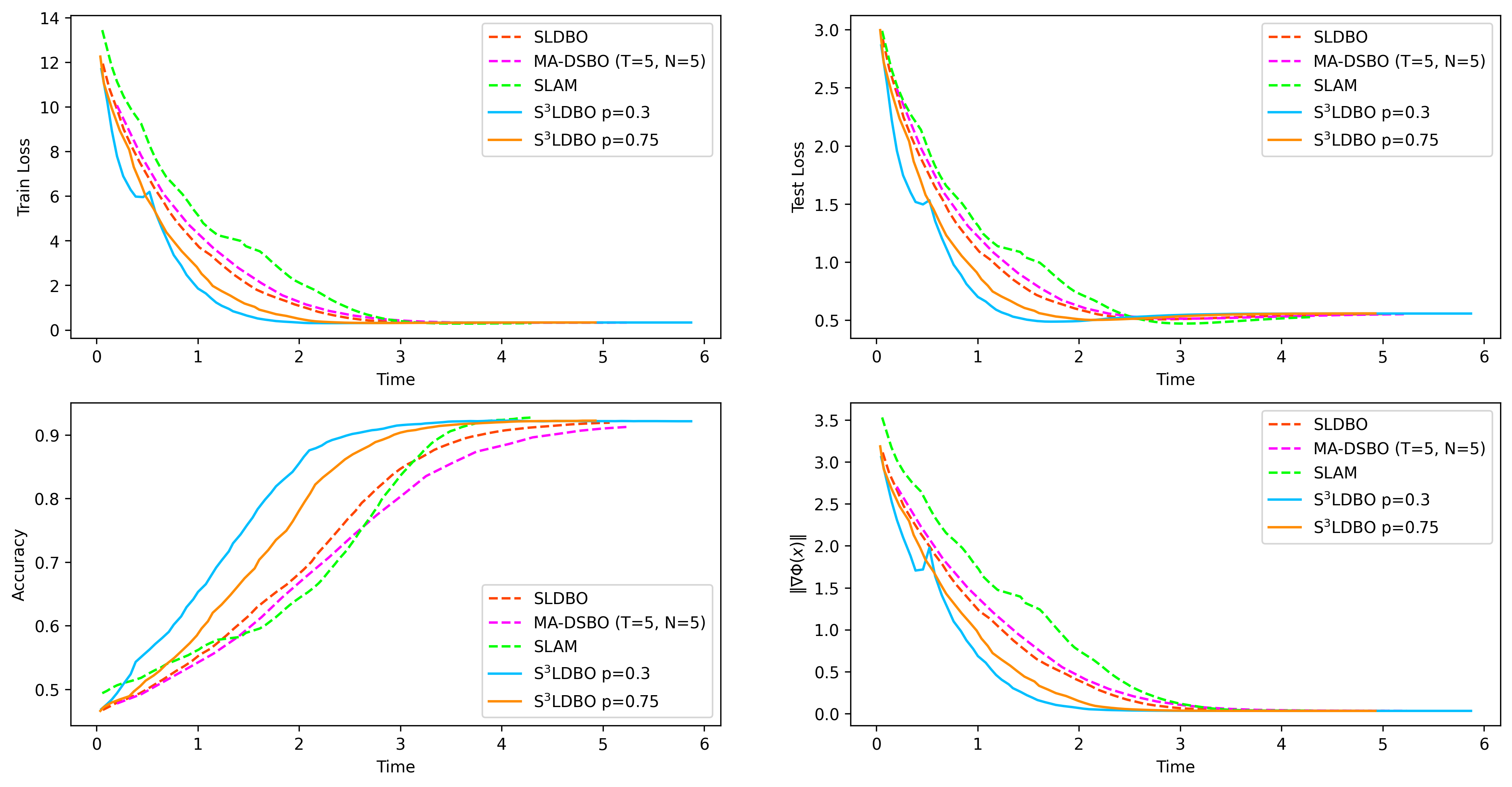}
\caption{Comparison of \ssldbo, SLDBO, SLAM and MA-DSBO on synthetic data with respect to CPU time ($d=60$).}
\label{fig:d60_time}
\end{figure}

\begin{figure}[!t]
\centering
\includegraphics[width=\linewidth]{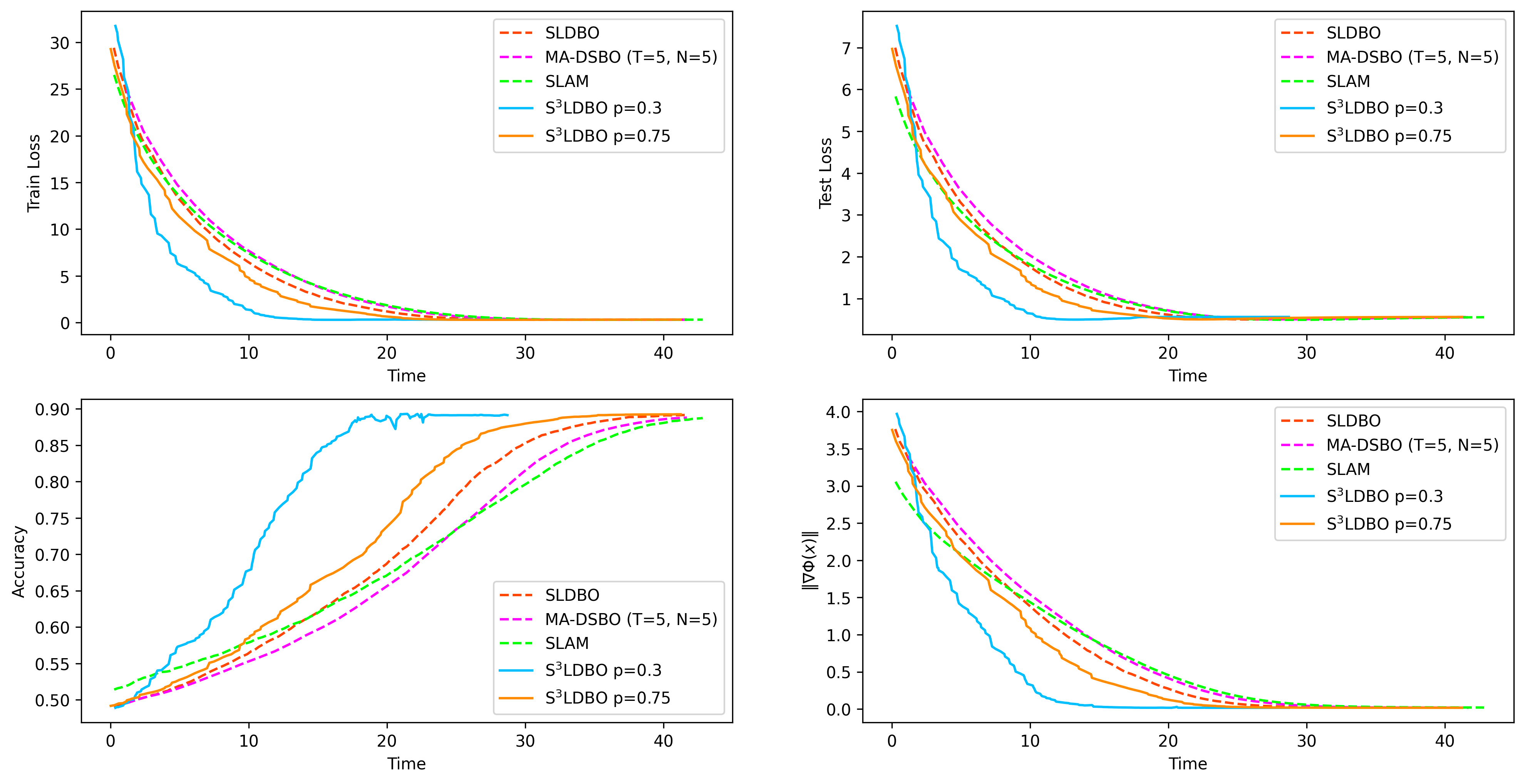}
\caption{Comparison of \ssldbo, SLDBO, SLAM and MA-DSBO on synthetic data with respect to CPU time ($d = 300$).}
\label{fig:d300_time}
\end{figure}

Fig. \ref{fig:d60_grad} and Fig. \ref{fig:d300_grad} display the comparison results between two single-loop algorithms, \ssldbo \,and SLDBO, with respect to train loss, test loss, accuracy, and hypergradient magnitude versus the number of lower-level function's gradient evaluations, where
Fig. \ref{fig:d60_grad} corresponds to $d=60$ and Fig. \ref{fig:d300_grad} corresponds to $d=300$.
It is noteworthy that within each iteration of the SLDBO algorithm, the iteration requires one computation of the lower-level function's gradient, two computations of the upper-level function's gradient, one computation of the lower-level function's Jacobian matrix, and one computation of the lower-level function's Hessian matrix. 
In Fig. \ref{fig:d60_grad} and Fig. \ref{fig:d300_grad}, the horizontal axis represents the number of lower-level function gradient computations in one node, based on which we present comparative curve results.
It can be seen from both figures that \ssldbo \,with $p=0.3$ performs the best, followed by \ssldbo \,with $p=0.75$ and SLDBO. 

\begin{figure}[!t]
\centering
\includegraphics[width=\linewidth]{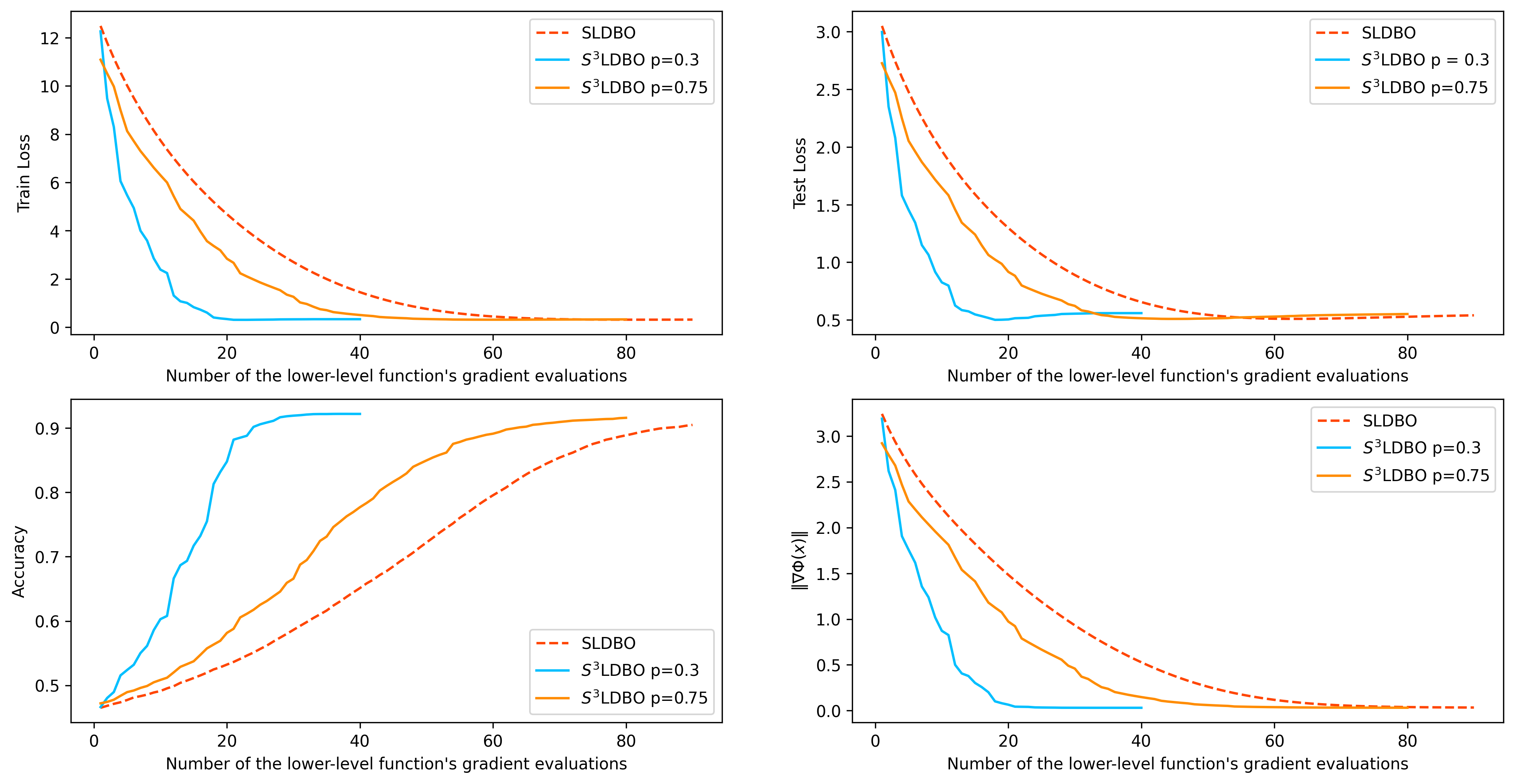}
\caption{Comparison between \ssldbo\, and SLDBO on synthetic data with respect to the number of gradient computations ($d$ = 60).}
\label{fig:d60_grad}
\end{figure}

\begin{figure}[!t]
\centering
\includegraphics[width=\linewidth]{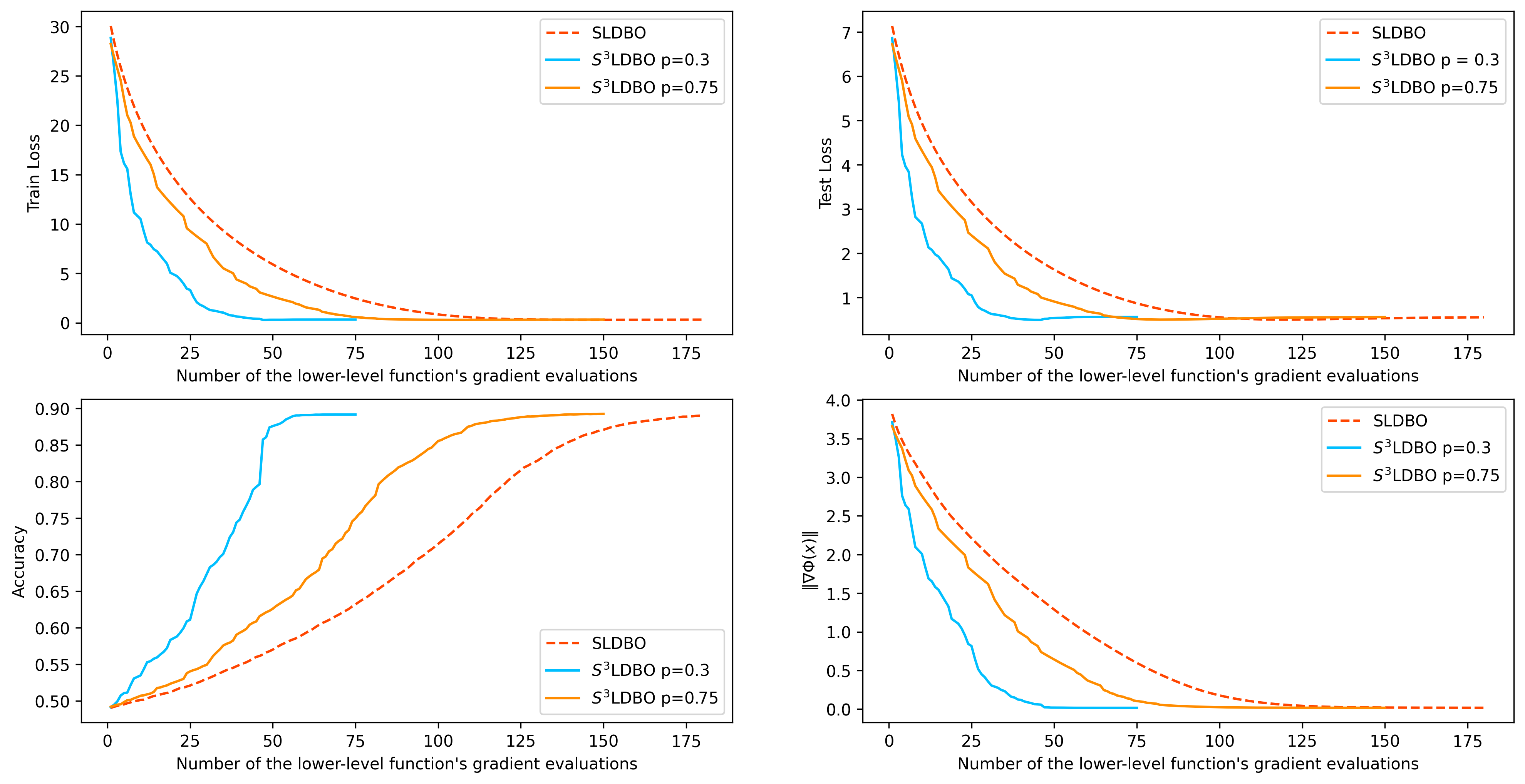}
\caption{Comparison between \ssldbo\, and SLDBO on synthetic data with respect to the number of gradient computations ($d$ = 300).}
\label{fig:d300_grad}
\end{figure}

Fig. \ref{fig:net} showcases the performance of \ssldbo\ across four graphs, which is showed in Fig. \ref{fig:net_topology}.
It can be observed that the performance of \ssldbo\ improves as the connectivity of the graph increases. Specifically, \ssldbo\ performs best when using the complete graph, followed by the random, the grid and the line graphs. This indicates that a network with better connectivity can generally accelerate the convergence process.

\begin{figure}[!t]
\centering
\includegraphics[width=\linewidth]{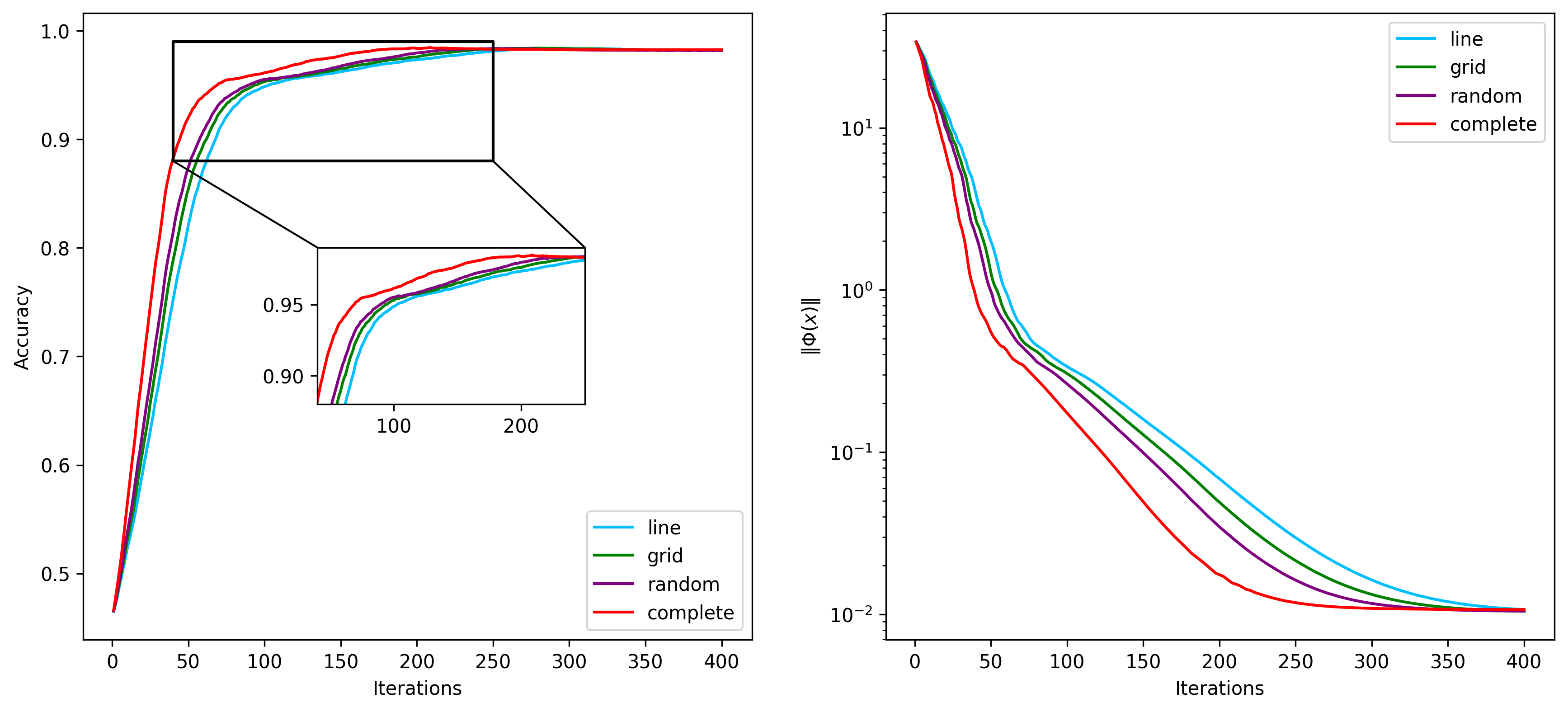}
\caption{Performance of \ssldbo\, with different networks.}
\label{fig:net}
\end{figure}

\begin{figure}[!t]
\centering
\includegraphics[width=\linewidth]{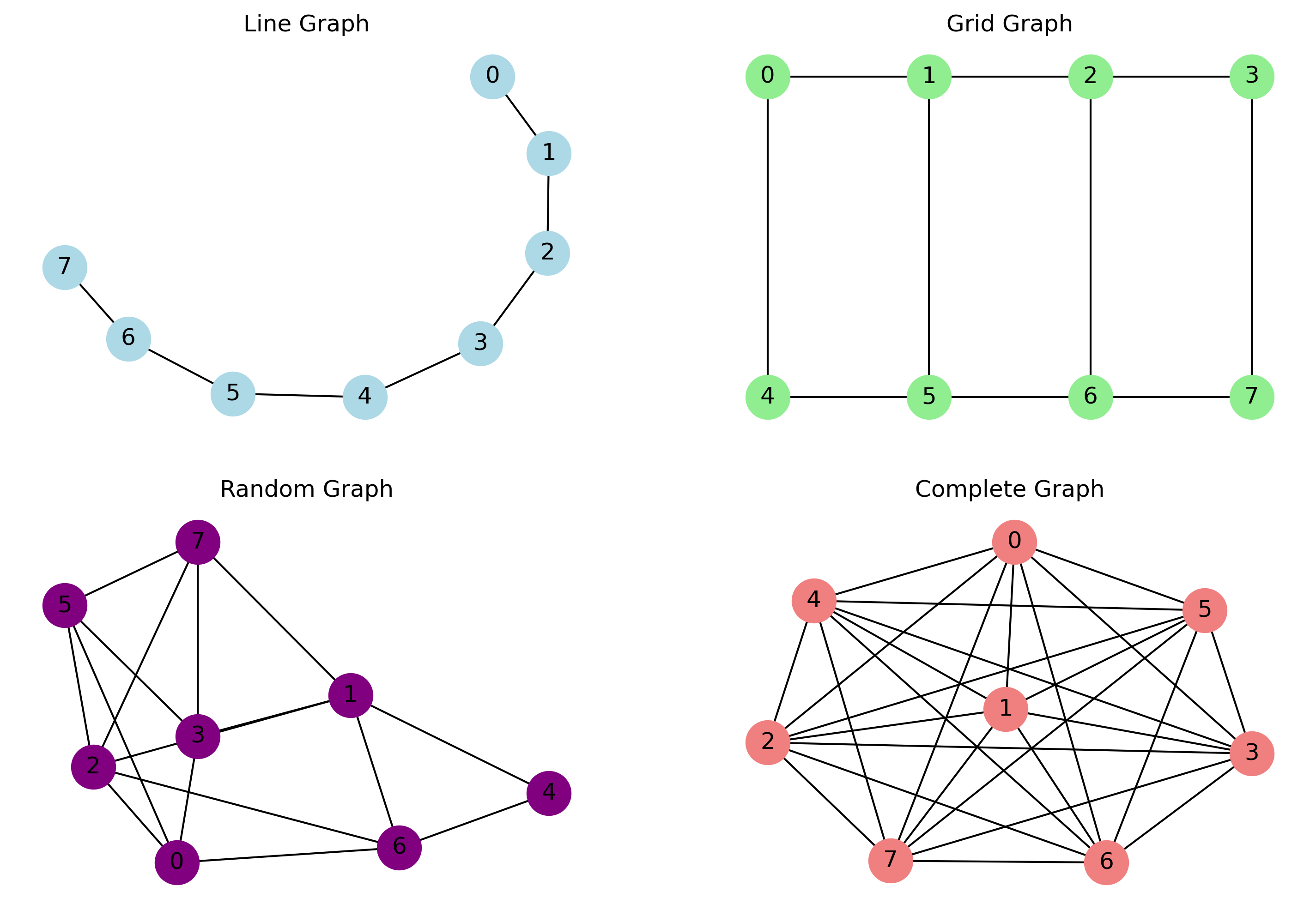}
\caption{Four distinct network structures.}
\label{fig:net_topology}
\end{figure}

\begin{figure}[!t]
\centering
\includegraphics[width=\linewidth]{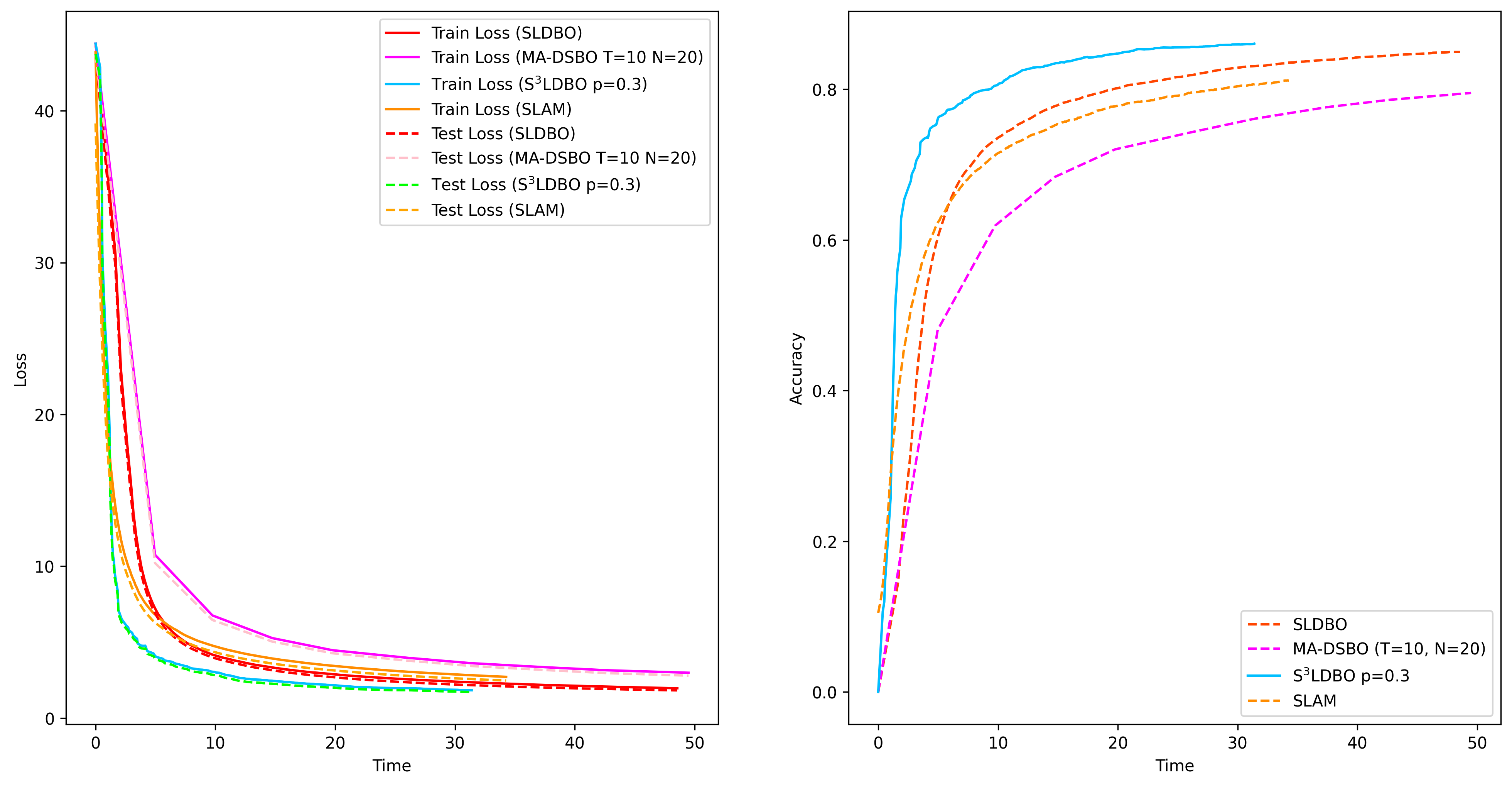}
\caption{Comparison of train loss, test loss, and classification accuracy for \ssldbo, SLDBO, SLAM and MA-DSBO on the MNIST dataset with respect to CPU time.}
\label{fig:slmnist}
\end{figure} 

\subsubsection{Real-world data}
Similar to the case of synthetic data, we define $\mathcal{D}_i$ and $\mathcal{D}_i^{\prime}$ as the training and testing datasets for node $i$, respectively.
Next, we apply \ssldbo, SLDBO and MA-DSBO to solve 
hyperparameter optimization problem using the MNIST dataset \cite{lecun1998gradient}.
In particular, the objective and constraint functions in \eqref{DBO} are given by 
\begin{align*}
F_i(\lambda, \omega)&= \frac{1}{|\mathcal{D}_i^{\prime}|}\sum_{(x_e,y_e)\in\mathcal{D}_i^{\prime}} L(x_e^{\top}\omega, y_e), \\
f_i(\lambda, \omega)&= \frac{1}{|\mathcal{D}_i|}\sum_{(x_e,y_e)\in \mathcal{D}_i}L(x_e^{\top}\omega, y_e) + \frac{1}{cd}\sum_{{m=1}}^{c}\sum_{j=1}^{d}e^{\lambda_j}\omega^2_{mj},
\end{align*}
where $\omega \in \mathbb{R}^{c\times d}$ is the model parameter, $L$ represents the cross entropy loss, and $|S|$ denotes the cardinality of a set $S$.
In this experiment, $c$ represents the number of classes and $d$ represents the number of features.
The values of $c$ and $d$ are set to $10$ and $784$ respectively.
The training and testing datasets each consist of 60,000 samples, with balanced representation across all classes.
For both \ssldbo\, and SLDBO, the hyperparameters were set as follows: $r_v=100$, ${\alpha}={\eta}=6$, and ${\beta}=12$. For MA-DSBO, we set $T=10$ and $N=20$. For SLAM, we set $b=5$.
In the experiment, we only tested $p=0.3$ for \ssldbo. 
The comparison results of test loss, train loss, and classification accuracy for the three algorithms with respect to CPU time are presented in Fig. \ref{fig:slmnist}. 
These results clearly demonstrate that all the compared algorithms can efficiently solve this problem. Moreover, our proposed algorithm \ssldbo\, performs the best in terms of improved convergence rate and classification accuracy, followed by SLDBO, SLAM and MA-DSBO.

\subsection{Data hyper-cleaning}
In this subsection, we compare the proposed \ssldbo\, with SLDBO on a data hyper-cleaning problem \cite{pmlr-v89-shaban19a} for the Fashion-MNIST dataset \cite{Fashionmnist2017}. The dataset consists of 60,000 images for training and 10,000 images for testing. 
Within the training phase, the 60,000 training images are partitioned into two distinct subsets: a training set of 50,000 images (denoted as $\mathcal{S}_{\mathcal{T}}$) and a validation set of 10,000 images (denoted as $\mathcal{S}_{\mathcal{V}}$).
In the data hyper-cleaning problem, we aim to train a classifier in a corrupted setting. Here, each training data's label is replaced by a random class number with probability {\tt prob} (i.e. the corruption rate). We again use a decentralized ring network with $n$ = 8 clients. 
This problem can be modeled as the bilevel optimization problem \eqref{DBO} with 
\begin{align*}
F_i(\lambda, w) &= \frac{1}{|\mathcal{S}^{i}_{\mathcal{V}}|}\sum_{(x_e, y_e)\in  \mathcal{S}^{i}_{\mathcal{V}}} L(w^{\top}x_e, y_e),\\
f_i(\lambda, w) &= \frac{1}{|\mathcal{S}^{i}_{\mathcal{T}}|}\sum_{(x_e, y_e)\in \mathcal{S}^{i}_{\mathcal{T}}} \sigma(\lambda_e) L(w^{\top}x_e, y_e  ) +\tilde{C}\|w\|^2.
\end{align*}
Here, $L$ represents the cross-entropy loss, $\tilde{C}$ is a regularization parameter, $\sigma(\cdot)$ is the sigmoid function, $\mathcal{S}^{i}_{\mathcal{T}}$ and $\mathcal{S}^{i}_{\mathcal{V}}$ respectively denote the training and validation sets of the $i$th client.
Following the description in \cite{Noniid2020}, we assume that on each client, training examples are independently drawn with class labels following a categorical distribution over $M$ classes parameterized by a vector $\mathbf{q} = (q_1,\ldots,q_M)$ where $q_i \geq 0$ for all $i$ and $\|\mathbf{q}\|_1 = 1$. 
To synthesize a population of non-identical clients, we draw $\mathbf{q}$ from a Dirichlet distribution $\mathbf{q} \sim Dir (\tilde{\alpha} {\mathbf{p}_{cls}})$. Here, ${\mathbf{p}_{cls}}$ represents a prior class distribution over $M$ classes, and $\tilde{\alpha} > 0$ is a concentration parameter that controls the identicalness among clients.  
In the experiment, we set $\tilde{C} = 0.005$.

In Fig. \ref{fig:data_distr_diri}, we display the data class distributions for both the i.i.d. case and the Dirichlet case with $\tilde{\alpha} = 0.1$, {$\mathbf{p}_{cls} =  (1,\ldots,1)$} and $M$ = 10. 
For both cases, our rule for corrupting the $10$ labels is to contaminate them using any one of the other 9 types.
Fig. \ref{fig:dataclean_distribution} shows the results of upper-level train loss, test accuracy and F1-score of the two algorithms for the two types of data distributions with corruption rate {\tt prob}$~= 0.4$,  from which it is clear that data heterogeneity does not influence the performance of the two algorithms. This is because neither of the two compared algorithms depends on the assumption of data heterogeneity.

\begin{figure}[!t]
\centering
\begin{minipage}[t]{\linewidth}
\includegraphics[width=0.46\textwidth, height=0.4\textwidth]{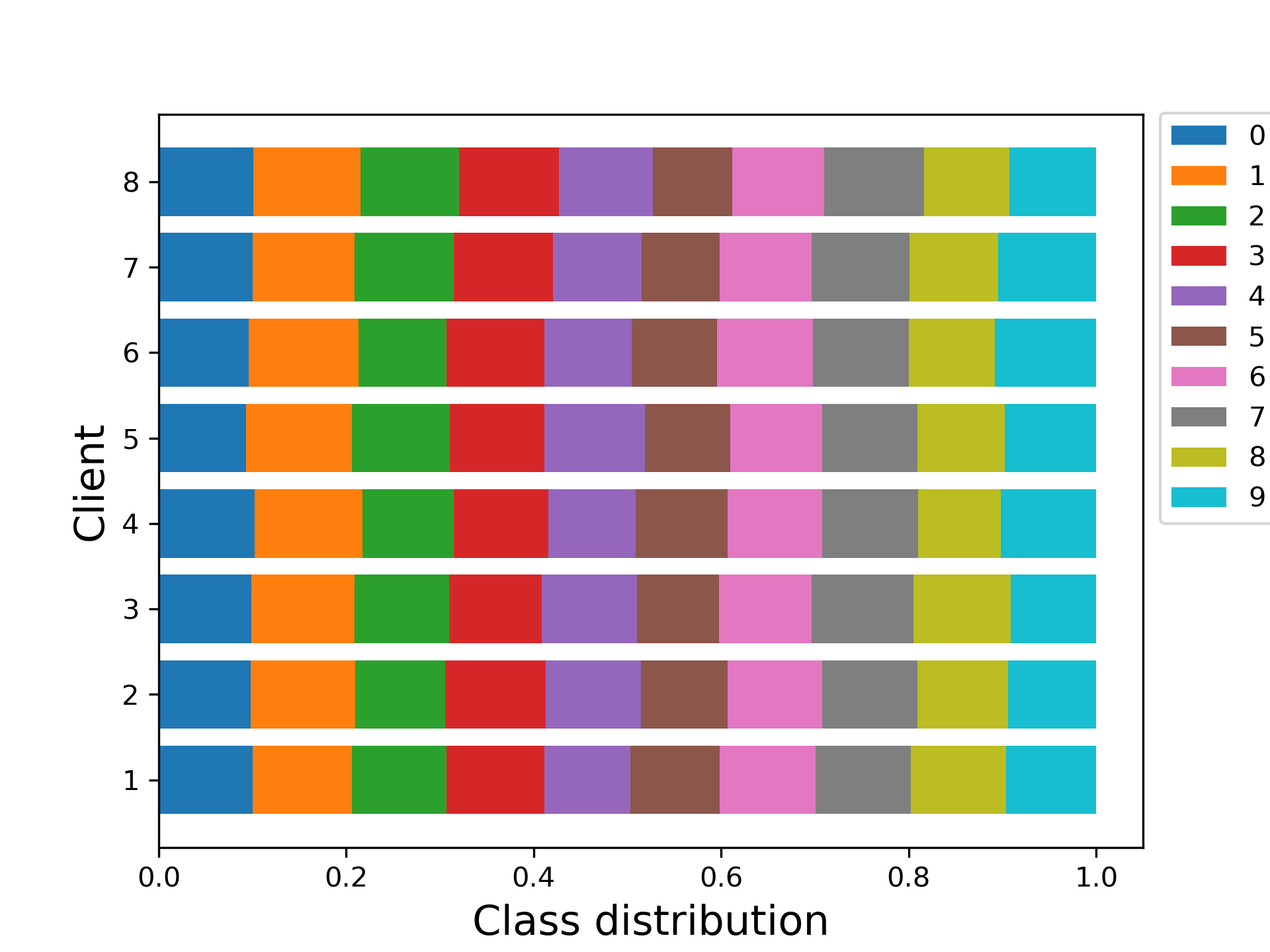}
\quad 
\includegraphics[width=.46\textwidth, height=0.4\textwidth]{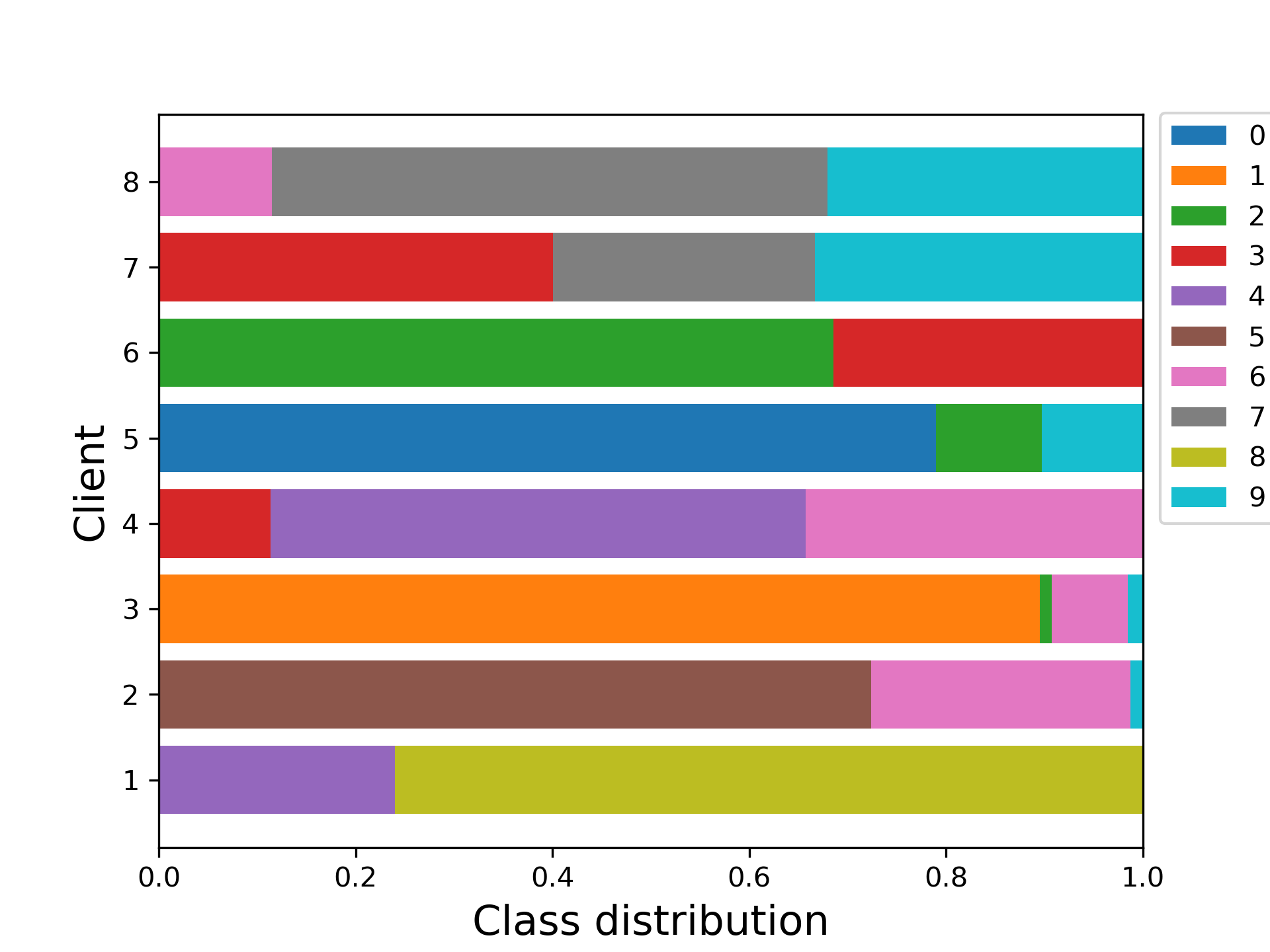}
\end{minipage}
\caption{Data class distributions on each client. The left figure is the i.i.d. case and the right figure is the Dirichlet case with $\tilde{\alpha} = 0.1$ and $\mathbf{p}_{cls} =  (1,\ldots,1)$.}
\label{fig:data_distr_diri}
\end{figure}

\begin{figure}[!t]
\centering
\includegraphics[width=\linewidth]{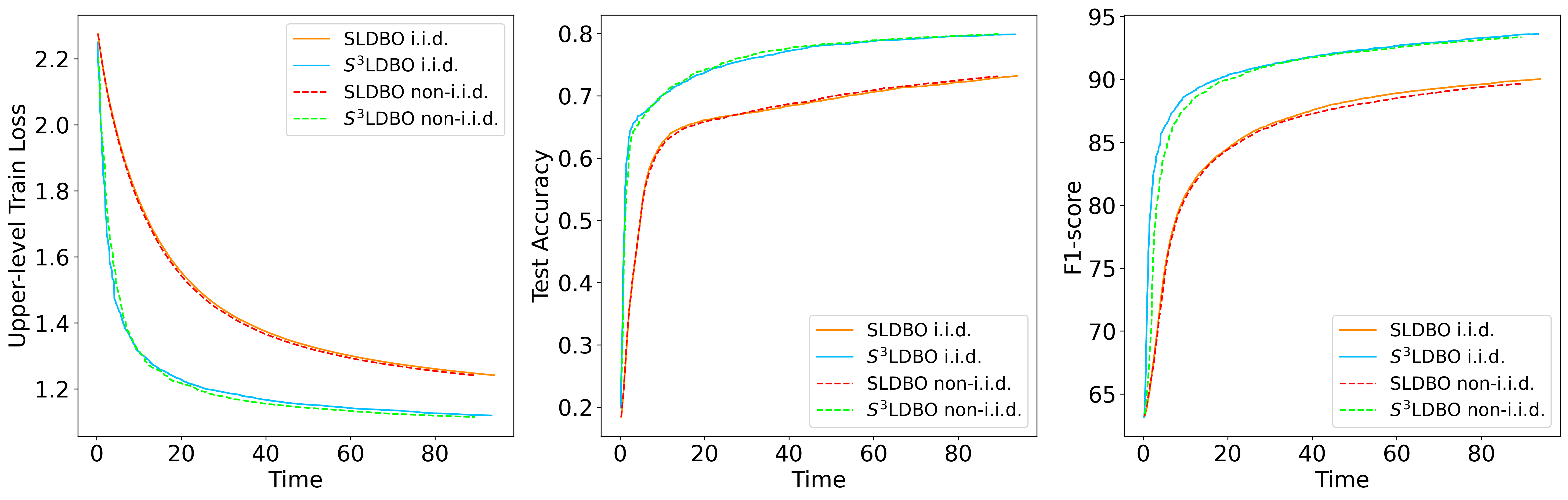}
\caption{Comparison of upper-level train loss, test accuracy, and F1-score for \ssldbo\, and SLDBO on the Fashion-MNIST dataset with i.i.d. and Dirichlet data distributions.}
\label{fig:dataclean_distribution}
\end{figure}
To clearly illustrate the performance of the two compared algorithms,  Fig. \ref{fig:dataclean_time} shows the results
of the upper-level train loss, classification accuracy and F1-score with corruption rates {\tt prob}$~= 0.4$ and {\tt prob}$~= 0.7$ as the CPU time progresses.
Here, classification accuracy refers to the accuracy of classifying data whose labels have not been corrupted, and F1-score measures the quality of the data cleaner \cite{pmlr-v202-shen23c}. 
The detailed experimental results of the final test accuracy, F1-score, and the consumed CPU time (in seconds) are summarized in Table \ref{table}, with an additional corruption rate {\tt prob}$~= 0.1$.
Compared to SLDBO, our algorithm \ssldbo\ achieves a 35\% reduction in CPU time, 
enhances accuracy by approximately 4\%, and increases the F1-score by 2 to 4 points.
From these results, it is evident that \ssldbo\ outperforms SLDBO in terms of both speed and accuracy.

\begin{figure}[!t]
\centering
\includegraphics[width=\linewidth]{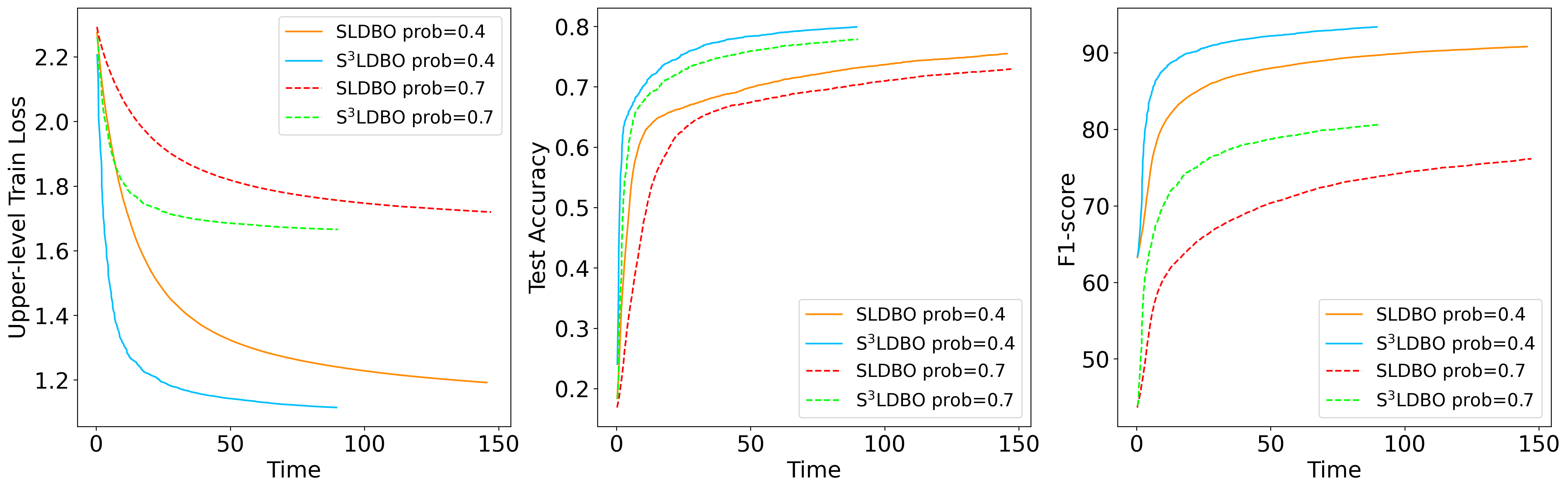}
\caption{Comparison of \ssldbo\, and SLDBO on the Fashion-MNIST dataset regarding CPU time. Two corruption rates are tested: {\tt prob}$~= 0.4$ and {\tt prob}$~= 0.7$.}
\label{fig:dataclean_time}
\end{figure}

\begin{table}[!t]
\caption{Comparison of solution quality and CPU time between S$^3$LDBO and SLDBO under different corruption rates.}
\label{table}
\centering
\begin{tabular}{ccccc}
\hline
Corruption Rate & Algorithm & Test Acc. & F1-score & CPU Time (s)\\
\hline
$\mathrm{prob}=0.1$ & SLDBO & 0.7721 & 92.88 & 143.29\\
                  & S$^3$LDBO & 0.8125 & 94.39 & 93.98\\
                  \midrule
$\mathrm{prob}=0.4$ & SLDBO & 0.7671 & 89.97 & 146.19\\
                  & S$^3$LDBO & 0.8027 & 92.04 & 92.29\\
                  \midrule
$\mathrm{prob}=0.7$ & SLDBO & 0.6752 & 75.38 & 144.40\\
                  & S$^3$LDBO & 0.7127 & 78.93 & 89.60\\
\hline
\end{tabular}
\end{table}
{
\subsection{Decentralized Meta-Learning}

To further evaluate the applicability of the proposed algorithm to networked AI systems, we conduct experiments on a decentralized meta-learning task \cite{pmlr-v70-finn17a, ICLRRaghuRBV20} using the miniImageNet dataset\cite{miniIamgent2016nips}. 
Meta-learning aims to learn a shared representation or initialization that can rapidly adapt to new tasks using only a small number of task-specific samples. 
This setting naturally leads to a bilevel structure, where the lower-level problem performs task-specific adaptation on the support set and the upper-level problem evaluates the adapted model on the query set.

We consider a collection of tasks $\{\mathcal{T}_r:r=1,\ldots,R\}$. 
For each task $\mathcal{T}_r$, let $x$ denote the shared model parameter and $y_r$ denote the task-specific parameter. 
The task loss is defined as
\begin{equation*}
    \ell_r(x,y_r)
    =
    \mathbb{E}_{\xi\in\mathcal{D}_r}
    L(x,y_r;\xi),
\end{equation*}
where $L$ denotes the supervised learning loss. 
In the decentralized setting, the data associated with each task are distributed over $n$ agents. 
For node $i$ and task $\mathcal{T}_r$, we denote the local dataset by $\mathcal{D}_{r,i}$, which is further split into a support set $\mathcal{D}^{\mathrm{tr}}_{r,i}$ and a query set $\mathcal{D}^{\mathrm{val}}_{r,i}$.

The decentralized meta-learning problem \cite{2022KayaalpTSP} can be formulated as the DBO problem \eqref{DBO}. 
For each node $i$, the upper-level and lower-level objectives are given by
\begin{equation*}
    F_i(x,y)
    =
    \frac{1}{R}
    \sum_{r=1}^{R}
    \left[
    \frac{1}{|\mathcal{D}^{\mathrm{val}}_{r,i}|}
    \sum_{\xi\in\mathcal{D}^{\mathrm{val}}_{r,i}}
    L(x,y_r;\xi)
    \right],
\end{equation*}
and
\begin{equation*}
    f_i(x,y)
    =
    \frac{1}{R}
    \sum_{r=1}^{R}
    \left[
    \frac{1}{|\mathcal{D}^{\mathrm{tr}}_{r,i}|}
    \sum_{\xi\in\mathcal{D}^{\mathrm{tr}}_{r,i}}
    L(x,y_r;\xi)
    + \mathcal{R}(y_r)
    \right],
\end{equation*}
where $y=(y_1,\ldots,y_R)$ collects the task-specific parameters and $\mathcal{R}(y_r)$ is a regularization term. 
The lower-level objective adapts the task-specific parameter using the support set, while the upper-level objective measures the query-set performance after adaptation. 
Compared with the previous logistic-regression-based experiments, this experiment is more representative of networked AI applications because it involves neural-network models, task-level adaptation, and distributed task data.

In our implementation, we use a 5-way 10-shot few-shot classification setting. 
The dataset is miniImageNet \cite{miniIamgent2016nips},
which is generated from ImageNet \cite{ILSVRC15} and consists of 100 classes
with each class containing 600 images of size 84 × 84.
Each task contains 5 classes, with 10 support samples and 10 query samples per class. 
The decentralized network consists of $n=8$ nodes. 
We use 1000 training tasks, 200 validation tasks, and 600 testing tasks. 
The batch size is set to 32, and all algorithms are trained for 1500 iterations. 
The models are evaluated every 25 iterations, and each evaluation is averaged over 200 episodes. 
The same task splits, batch size, and evaluation protocol are used for all compared methods.

We compare S$^3$LDBO with SLDBO, MAML\cite{pmlr-v70-finn17a} and ANIL\cite{ICLRRaghuRBV20}. 
MAML\cite{pmlr-v70-finn17a}  is a representative gradient-based meta-learning baseline, while ANIL\cite{ICLRRaghuRBV20} restricts task-specific adaptation mainly to the final layers.
For a fair comparison, the stepsizes of all compared methods are tuned from the same candidate set on the validation tasks. 
For S$^3$LDBO, we set the upper-level stepsize to $\alpha = 0.002$, the lower-level stepsize to $\beta = 0.004$, and the auxiliary-variable stepsize to $\eta = 0.001$. 
For SLDBO, we use the same stepsizes. 
For MAML and ANIL, the meta stepsize and the inner-loop adaptation stepsize are set to $\alpha_{\mathrm{meta}} = 0.002$ and $\alpha_{\mathrm{in}} = 0.002$, respectively. 
These values are selected to provide stable training behavior for all methods under the same miniImageNet task split.
For the snapshot probability in S$^3$LDBO, we use $p=0.6$ in this experiment. 
This value is chosen as a moderate snapshot frequency that balances computational saving and update accuracy: a smaller $p$ further reduces derivative-evaluation cost but may introduce larger snapshot errors, whereas a larger $p$ makes the method closer to SLDBO and weakens the computational advantage. 
Since the influence of $p$ has already been investigated in the synthetic and data hyper-cleaning experiments, the decentralized meta-learning experiment focuses on validating whether the proposed snapshot mechanism remains effective in a more computationally demanding neural-network-based setting. 
Therefore, we report the representative result with $p=0.6$.

\begin{figure}[!t]
\centering
\begin{minipage}[t]{\linewidth}
\includegraphics[width=0.47\textwidth, height=0.4\textwidth]{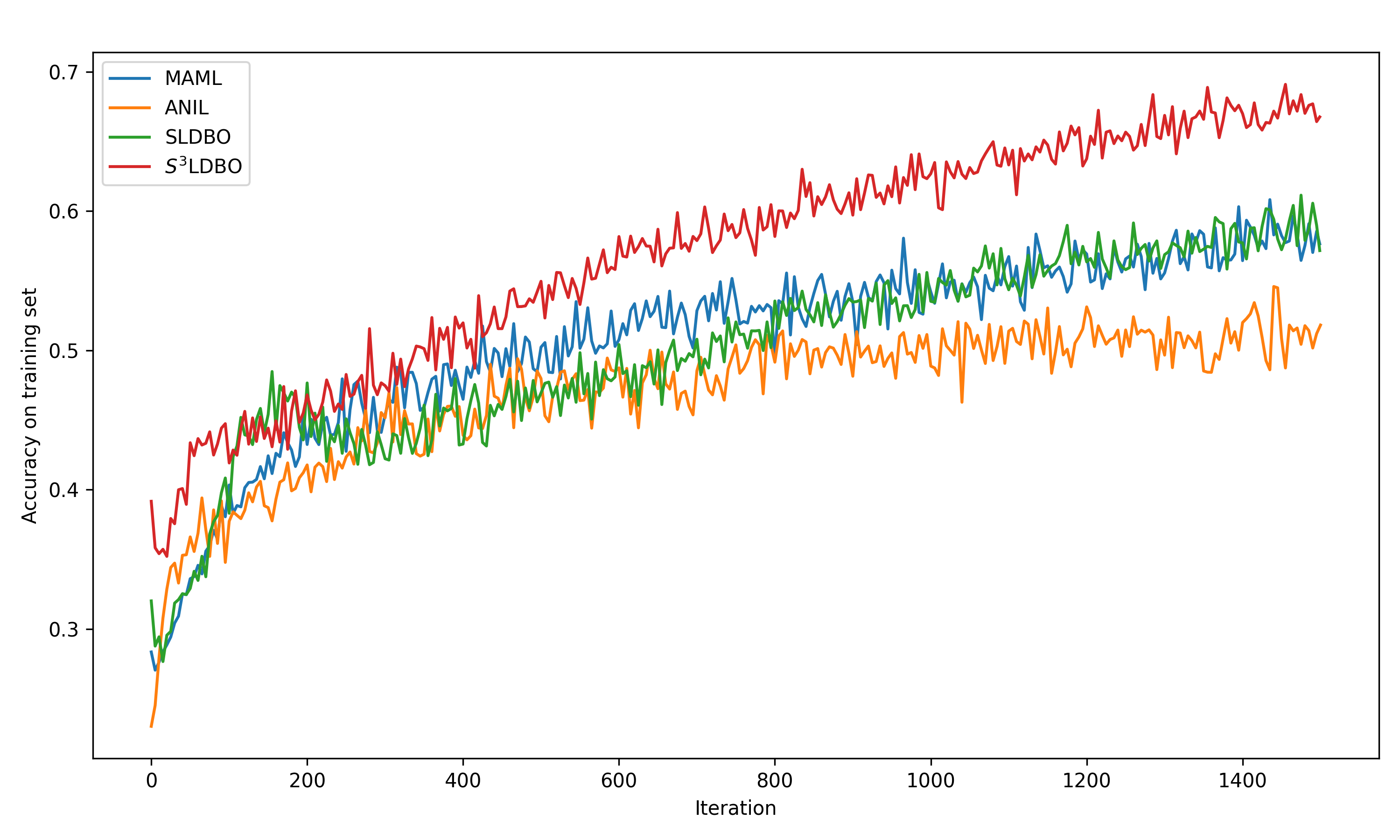}
\quad 
\includegraphics[width=.47\textwidth, height=0.4\textwidth]{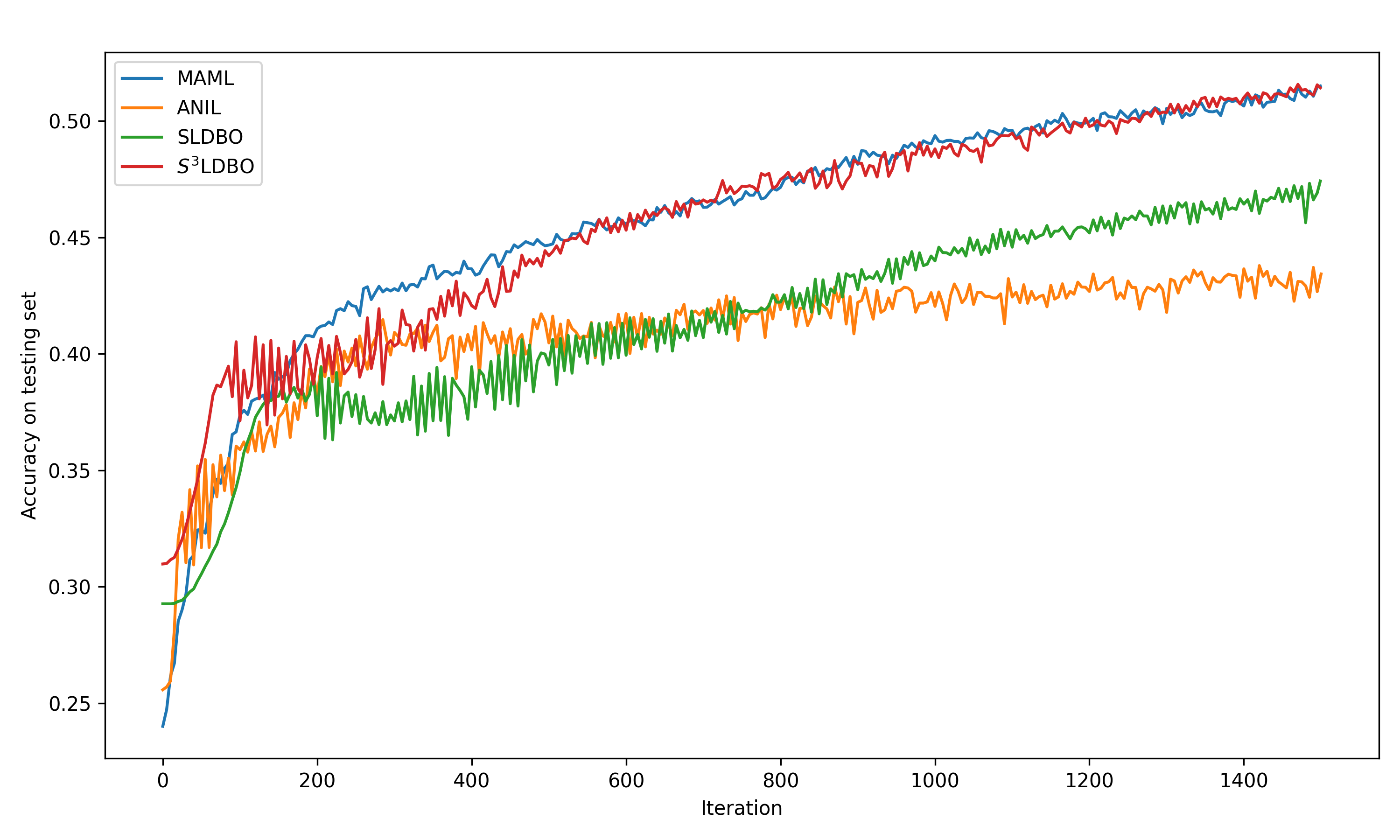}
\end{minipage}
\caption{The accuracy on training set (left) and testing set (right) of different algorithms for the meta-learning problem with respect to the number of iterations.}
\label{fig:meta_learning_iteration}
\end{figure}

\begin{figure}[!t]
\centering
\begin{minipage}[t]{\linewidth}
\includegraphics[width=0.47\textwidth, height=0.4\textwidth]{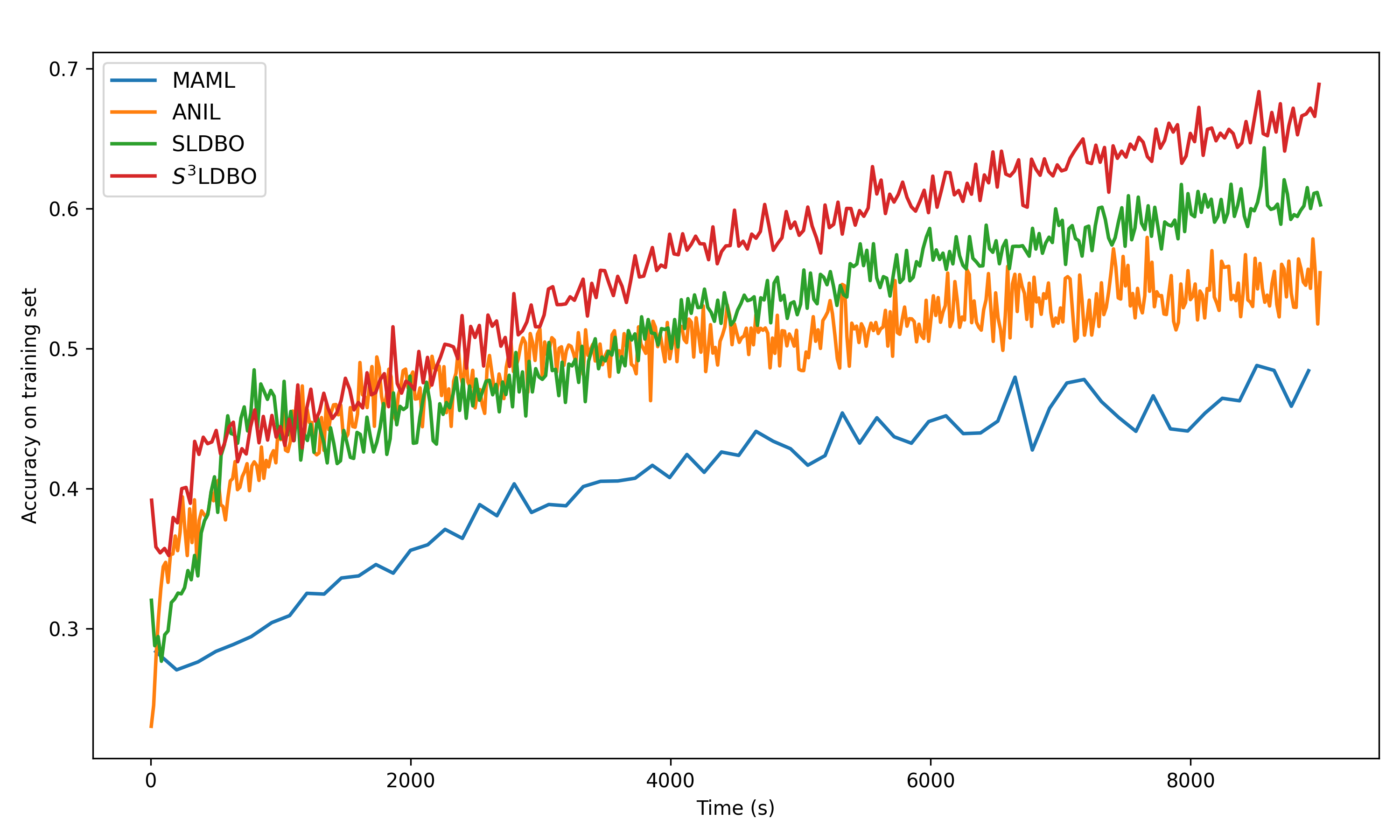}
\quad 
\includegraphics[width=.47\textwidth, height=0.4\textwidth]{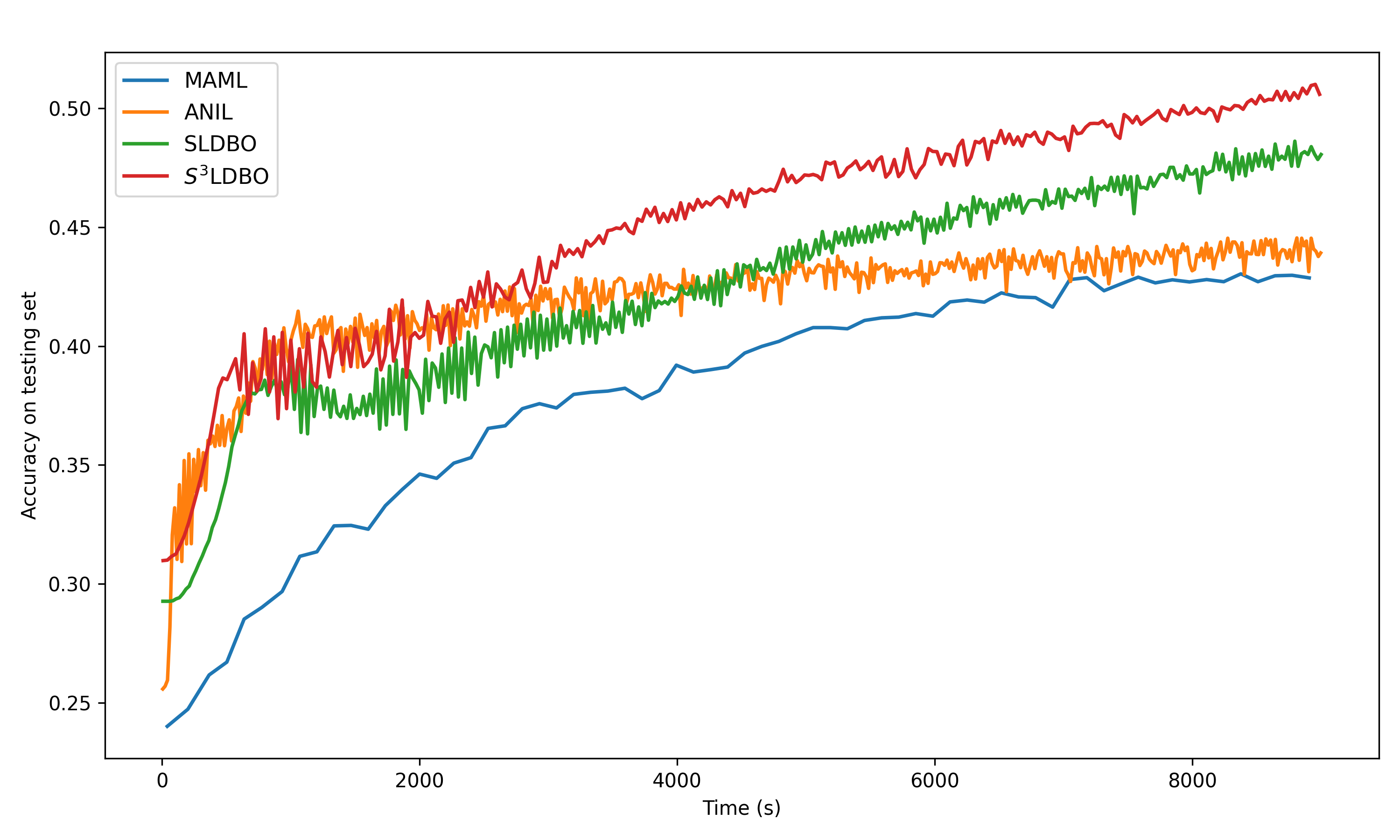}
\end{minipage}
\caption{The accuracy on training set (left) and testing set (right) of different algorithms for the meta-learning problem with respect to CPU time.}
\label{fig:meta_learning_time}
\end{figure}

Fig.~\ref{fig:meta_learning_iteration} and Fig.~\ref{fig:meta_learning_time} compare different algorithms on the meta-learning problem in terms of training and testing accuracy, where Fig.~\ref{fig:meta_learning_iteration} reports the results with respect to the number of iterations and Fig.~\ref{fig:meta_learning_time} reports the results with respect to CPU time.
Under the iteration-based comparison, S$^3$LDBO achieves the highest training accuracy and obtains testing accuracy comparable to MAML, while outperforming SLDBO and ANIL. 
This indicates that the proposed method maintains strong task adaptation performance during the iterative optimization process. 
Under the CPU-time-based comparison, S$^3$LDBO achieves the best performance on both training and testing tasks within the same time budget. 
These results show that the snapshot mechanism not only preserves competitive learning performance in terms of iterations, but also improves practical time efficiency by reducing expensive derivative evaluations. 
Overall, the meta-learning experiment demonstrates that S$^3$LDBO is effective for decentralized task adaptation and is suitable for networked AI applications with limited computational resources.
}

\section{Concluding remarks}\label{sec_conclude}
    In this paper, we propose a novel single-loop algorithm, \ssldbo, which integrates the SLDBO framework with the snapshot gradient tracking technique to efficiently solve decentralized bilevel optimization problems. Unlike other single-loop algorithms that require the computation of full gradients, Jacobian, and Hessian matrices in every iteration -- a time-consuming task in practical settings, \ssldbo\, allows agents to skip these calculations in some iterations, which significantly reduces computational costs in large-scale distributed optimization. 
    We establish the ergodic iteration complexity and the high probability nonergodic iteration complexity of \ssldbo.
    {Numerical experiments on hyperparameter optimization, data hyper-cleaning, and decentralized meta-learning confirm the efficacy of the proposed algorithm.
    These results suggest that snapshot-based DBO algorithm is a promising tool for scalable and adaptive learning in networked AI systems.}
    A key avenue for future research is the extension of \ssldbo\, to the stochastic setting.
    Developing an algorithm that can effectively handle variance from both stochastic data sampling and the snapshot mechanism would require new analytical tools and could significantly broaden the practical applicability of this work to large-scale machine learning problems.

\bibliography{main}
\bibliographystyle{IEEEtran}
\appendices
\onecolumn
\section{Complete proof of the convergence results}\label{appendix}

In this appendix, we provide the proof of our convergence results, i.e., Theorem \ref{cr} and Theorem \ref{cp}. Assumptions \ref{Assump1} and \ref{Assump2} are used throughout the proof. 

Recall that $r_v:= L_{F,0}/\sigma$ and $\rho$ is defined in Assumption \ref{Assump2}. Moreover, we define the following constants for the analysis:
\begin{equation}\label{def-constants-main-text}
	\left\{
	\begin{array}{l}
		L_v:=\left(L_{F,1}+L_{f,2}r_v\right)\left(1+ L_{f,1}/\sigma \right), \quad 
		L_1:=L_{F,1}+L_{f,2}r_v, 
		\medskip  
		\\
		L_{\Phi}:= L_{F, 1}+\frac{2 L_{F, 1} L_{f, 1}+L_{f, 2} L_{F, 0}^2}{\sigma}+\frac{2 L_{f, 1} L_{F, 0} L_{f, 2}+L_{f, 1}^2 L_{F, 1}}{\sigma^2}+\frac{L_{f, 2} L_{f, 1}^2 L_{F, 0}}{\sigma^3},
	\end{array}
	\right.
\end{equation}
where constants such as $\sigma$, 
$L_{F,0}$, $L_{F,1}$, $L_{f,1}$, $L_{f,2}$ are defined in Assumptions \ref{Assump1} and \ref{Assump2}. 

\subsubsection{Notation, constants, and roadmap}
For convenience in the subsequent convergence proof, we rewrite the $k$th iteration of Algorithm \ref{alg:sslDBo} in a more concise form as follows, together with \eqref{yd}, \eqref{vd} and \eqref{xd}:
\begin{subequations}\label{ssldbo_concise}
	\begin{align}
		z_{y,i}^{k} &= t_{y,i}^k + \xi^k(d_{y,i}^k - d_{\ty,i}^k)/p,&
		y_i^{k+1}&=\sum\nolimits_{j=1}^n w_{i j} (y_j^k-\beta  z_{y, j}^k), \label{yupdate}
		\\
		\tilde{y}_i^{k+1} &= \xi^k y_i^{k} + (1-\xi^k)\tilde{y}_i^{k},& 	   
		t_{y, i}^{k+1}&=\sum\nolimits_{j=1}^n w_{i j} \big(t_{y, j}^{k}+ \xi^k (d_{y,j}^{k}-d_{\tilde{y}, j}^{k})\big), \label{tyupdate}
		\\
		z_{v,i}^{k} &= t_{v,i}^k + \xi^k (d_{v,i}^k - d_{\tv,i}^k)/p,& v_i^{k+1}&=\mathcal{P}_{r_v}\left[\sum\nolimits_{j=1}^n w_{i j}( v_j^k+\eta  z_{v, j}^k)\right], \label{vupdate}
		\\
		\tilde{v}_i^{k+1} &= \xi^k v_i^{k} + (1-\xi^k)\tilde{v}_i^{k},&
		t_{v, i}^{k+1}&=\sum\nolimits_{j=1}^n w_{i j} \big(t_{v, j}^{k}+\xi^k (d_{v,j}^{k}-d_{\tilde{v}, j}^{k})\big), \label{tvupdate}
		\\
		z_{x,i}^{k} &= t_{x,i}^k +  \xi^k (d_{x,i}^k - d_{\tx,i}^k)/p,&
		x_i^{k+1}&=\sum\nolimits_{j=1}^n w_{i j} (x_j^k-\alpha z_{x,j}^k),\label{xupdate} 
		\\ 
		\tilde{x}_i^{k+1} &= \xi^k x_i^{k} + (1-\xi^k)\tilde{x}_i^{k},& 
		t_{x, i}^{k+1}&=\sum\nolimits_{j=1}^n w_{i j}\big( t_{x, j}^{k}+\xi^k (d_{x,j}^k-d_{\tilde{x}, j}^{k})\big),\label{txupdate}\\
		d_{\ty,i}^{k+1} &
		= \xi^{k}d_{y,i}^{k} + (1-\xi^{k})d_{\ty,i}^{k} 
		,&
		d_{\tv,i}^{k+1} &
		= \xi^{k}d_{v,i}^{k} + (1-\xi^{k})d_{\tv,i}^{k} 
		,\\
		d_{\tx,i}^{k+1} &
		= \xi^{k}d_{x,i}^{k} + (1-\xi^{k})d_{\tx,i}^{k}.
	\end{align}
\end{subequations}
From \eqref{ssldbo_concise}, we know that $d_{\ty,i}^{k+1}, d_{\tv,i}^{k+1}$ and $d_{\tx,i}^{k+1}$ have the following representations, which will be useful in the subsequent analysis: 
\begin{align}
	d_{\ty,i}^{k+1} &= \nabla_2 f_i(\tx_i^{k+1},\ty_i^{k+1}), \label{tyd}\\
	d_{\tv,i}^{k+1} &= \nabla_2 F_i(\tx_i^{k+1},\ty_i^{k+1}) - \nabla^2_{22} f_i(\tx_i^{k+1},\ty_i^{k+1}) \tv_i^{k+1},\label{tvd}\\
	d_{\tx,i}^{k+1} &= \nabla_1 F_i(\tx_i^{k+1},\ty_i^{k+1}) - \nabla^2_{12} f_i(\tx_i^{k+1},\ty_i^{k+1}) \tv_i^{k+1}.\label{txd}
\end{align}
For the analysis, we further define
\begin{equation}\label{def:bar9}
	\left\{
	\begin{array}{lll}
		{\x^{k}} := \frac{1}{n}\sum_{i=1}^{n}x_{i}^k, & {\y^{k}} := \frac{1}{n}\sum_{i=1}^{n}y_{i}^k, & {\v^{k}} := \frac{1}{n}\sum_{i=1}^{n}v_{i}^k, \smallskip\\
		\d_x^k := \frac{1}{n}\sum_{i=1}^{n}d_{x,i}^k, & \d_y^k := \frac{1}{n}\sum_{i=1}^{n}d_{y,i}^k, & \d_v^k := \frac{1}{n}\sum_{i=1}^{n}d_{v,i}^k, \smallskip\\
		\d_{\tx}^k := \frac{1}{n}\sum_{i=1}^{n}d_{\tx,i}^k,  & {\d_{\ty}^{k}} := \frac{1}{n}\sum_{i=1}^{n}d_{\ty,i}^k,  & \d_{\tv}^k := \frac{1}{n}\sum_{i=1}^{n}d_{\tv,i}^k,
		\smallskip \\
		\t_x^k := \frac{1}{n}\sum_{i=1}^{n}t_{x,i}^k, & \t_y^k := \frac{1}{n}\sum_{i=1}^{n}t_{y,i}^k, & \t_v^k := \frac{1}{n}\sum_{i=1}^{n}t_{v,i}^k,
		\smallskip \\
		\z_x^k := \frac{1}{n}\sum_{i=1}^{n}z_{x,i}^k, & \z_y^k := \frac{1}{n}\sum_{i=1}^{n}z_{y,i}^k, & \z_v^k := \frac{1}{n}\sum_{i=1}^{n}z_{v,i}^k,
	\end{array}
	\right.
\end{equation}
and two additional constants
\begin{equation}\label{def-constants-proof}
	C_1   := 3\max\{(L_{F,1}+r_{v}L_{f,2})^2,L_{f,1}^2\}
	\text{~~and~~}
	C_2  := L_{f,1}^2.  
\end{equation}

\paragraph{Roadmap of the proof.}
Here we briefly describe the roadmap of the proof of Theorem \ref{cr}.
First, we define the Lyapunov function
\begin{align}
	V_k = \;
	& F(\x^k, y ^{*}(\x^k)) - F^{*} + a_1 \|\y^k - y^{*}(\x^k)\|^2 + a_2 \|\v^k - v^{*}(\x^k)\|^2 + \frac{a_3}{n}\sum_{i=1}^n \|x_i^k -\x^k\|^2 \nonumber\\
	& + \frac{a_4}{n} \sum_{i=1}^n \|y_i^k -\y^k\|^2 +\frac{a_5}{n} \sum_{i=1}^n \|v_i^k - \v^k\|^2 + \frac{a_6 \alpha^2}{n} \sum_{i=1}^n\|z_{x,i}^k - \z_{x}^k\|^2 + \frac{a_7 \beta^2}{n}\sum_{i=1}^n \|z_{y,i}^k - \z_{y}^k\|^2  \nonumber \\
	& + \frac{a_8 \eta^2}{n} \sum_{i=1}^n \|z_{v,i}^k - \z_{v}^k\|^2 + \frac{a_9}{n} \sum_{i=1}^n \|x_i^k - \tx_i^k\|^2 + \frac{a_{10}}{n} \sum_{i=1}^n \|y_i^k - \ty_i^k\|^2 + \frac{a_{11}}{n} \sum_{i=1}^n \|v_i^k -\tv_i^k\|^2,\label{Lyapunov}
\end{align}
where  $v^*(x) := \left[\nabla^2_{22} f(x,y^*(x))\right]^{-1}\nabla_2 F(x,y^*(x))$ and $a_1, a_2, \ldots, a_{11}$ are constants. We will show that, for another set of positive constants $A_1, A_2, \ldots, A_{12}$, the following inequality holds 
\begin{align*}
	\E{V_{k+1} - V_{k}} 
	\leq & -\frac{\alpha}{2} \E{\norm{\nabla \Phi(\x_{k})}} -A_1\E{\norm{\d_{x}^k}} -A_2\E{\norm{\y^k - y^{*}(\x^k)}} - A_3 \E{\norm{\v^k - v^{*}(\x^k)}} \\
	& - \frac{A_4}{n}\E{\sum_{i=1}^n\norm{x_i^k -\x^k}} - \frac{A_5}{n}\E{\sum_{i=1}^n\norm{y_i^k -\y^k}} - \frac{A_6}{n}\E{\sum_{i=1}^n\norm{v_i^k -\v^k}}\\
	& - \frac{A_7}{n}\E{\sum_{i=1}^n\norm{z_{x,i}^k - \z_{x}^k}} - \frac{A_8}{n}\E{\sum_{i=1}^n\norm{z_{y,i}^k - \z_{y}^k}} - \frac{A_9}{n}\E{\sum_{i=1}^n\norm{z_{v,i}^k - \z_{v}^k}}\\ 
	& - \frac{A_{10}}{n}\E{\sum_{i=1}^n\norm{x_i^k -\tx_{i}^k}} - \frac{A_{11}}{n}\E{\sum_{i=1}^n\norm{y_i^k -\ty_{i}^k}} - \frac{A_{12}}{n} \E{\sum_{i=1}^n \norm{v_i^k -\tv_{i}^k}}.
\end{align*}
Then, the proof can be obtained by telescoping the above inequality.

\subsubsection{Some lemmas for consensus error of Algorithm \ref{alg:sslDBo}}
Recall that ${\x^{k}}$, ${\y^{k}}$, ${\v^{k}}$, $\d_x^k$, $\d_y^k$, $\d_v^k$, $\d_{\tx}^k$, $\d_{\ty}^k$, $\d_{\tv}^k$, $\t_x^k$, $\t_y^k$, $\t_v^k$, $\z_x^k$, $\z_y^k$ and $\z_v^k$ are defined in \eqref{def:bar9}. 
Let $\mathcal{F}_k$ be the sigma field generated by $\{\xi^j\}_{0\leq j\leq k-1}$ and $\mathbb{E}_k[\cdot]$ be the conditional expectation taken over $\mathcal{F}_k$. Then, for any $k \geq 0$, the variables $\{x_i^k, y_i^k, v_i^k, d_{x,i}^k, d_{y,i}^k, d_{v,i}^k, \tx_i^k, \ty_i^k, \tv_i^k, d_{\tx,i}^k, d_{\ty,i}^k, d_{\tv,i}^k, t_{x,i}^k, t_{y,i}^k, t_{v,i}^k\}$ are measurable with respect to $\mathcal{F}_k$. 
We first prove some useful lemmas.
\begin{lemma}
	For any $0 \leq k \leq K$, there hold
	\begin{align}
		\bar{t}_{x}^k = \bar{d}_{\tilde{x}}^k, \quad \t_{y}^k &= \d_{\ty}^k,\quad \t_{v}^k = \d_{\tv}^k ; \label{bar_t}
		\\
		\Ek{\bar{z}_{x}^k} = {\bar{d}_{x}^k}, \quad  \Ek{\bar{z}_{y}^k} &= {\bar{d}_{y}^k}, \quad 
		\Ek{\bar{z}_{v}^k} = {\bar{d}_{v}^k}. \label{bar_z}
	\end{align}
\end{lemma}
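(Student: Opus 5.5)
The identity \eqref{bar_t} will be proved by induction on $k$, and \eqref{bar_z} will then follow from it by a one-line conditional expectation computation.

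\emph{Base case.} The initialization gives $t_{x,i}^0=d_{\tx,i}^0=0$ for every $i$, so $\t_x^0=\d_{\tx}^0=0$. The same holds for $y$ and $v$.

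\emph{Inductive step.} Average the update \eqref{txupdate} over $i$. Since $W$ is doubly stochastic (Assumption \ref{Assump2}), $\frac1n\sum_i\sum_j w_{ij}a_j=\frac1n\sum_j a_j$ for any vectors $a_j$. This gives
\begin{equation*}
\t_x^{k+1}=\t_x^k+\xi^k\big(\d_x^k-\d_{\tx}^k\big).
\end{equation*}
Averaging the recursion $d_{\tx,i}^{k+1}=\xi^k d_{x,i}^k+(1-\xi^k)d_{\tx,i}^k$ from \eqref{ssldbo_concise} gives
\begin{equation*}
\d_{\tx}^{k+1}=\d_{\tx}^k+\xi^k\big(\d_x^k-\d_{\tx}^k\big).
\end{equation*}
Subtracting the two, $\t_x^{k+1}-\d_{\tx}^{k+1}=\t_x^k-\d_{\tx}^k$, which is $0$ by the induction hypothesis. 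The $y$ and $v$ components are handled identically using \eqref{tyupdate} and \eqref{tvupdate}. The projection in the $v$-update affects only $v_i^{k+1}$, not $t_{v,i}^{k+1}$ or $d_{\tv,i}^{k+1}$, so it plays no role here.

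\emph{Proof of \eqref{bar_z}.} Averaging the definition of $z_{x,i}^k$ in \eqref{xupdate} and using \eqref{bar_t} gives
\begin{equation*}
\z_x^k=\d_{\tx}^k+\frac{\xi^k}{p}\big(\d_x^k-\d_{\tx}^k\big).
\end{equation*}
All quantities other than $\xi^k$ are $\mathcal F_k$-measurable, as noted before the lemma. The variable $\xi^k\sim Bernoulli(p)$ is independent of $\mathcal F_k$, since $\mathcal F_k$ is generated by $\xi^0,\dots,\xi^{k-1}$. Hence $\Ek{\xi^k}=p$, and therefore
\begin{equation*}
\Ek{\z_x^k}=\d_{\tx}^k+\big(\d_x^k-\d_{\tx}^k\big)=\d_x^k.
\end{equation*}
The same computation applies to $y$ and $v$.

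The only point that needs care is the measurability claim: $d_{x,i}^k$ is a function of $x_i^k,y_i^k,v_i^k$, which are determined by $\xi^0,\dots,\xi^{k-1}$, even though $d_{x,i}^k$ is computed only when $\xi^k=1$. In \eqref{ssldbo_concise} it is defined for every $k$ via \eqref{yd}–\eqref{xd}, so it is $\mathcal F_k$-measurable. Beyond this, the argument is routine.
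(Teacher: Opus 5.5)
Your proof is correct and follows the paper's argument. You run an induction on $k$ from the zero initialization to get $\t_x^k=\d_{\tx}^k$ (and likewise for $y$ and $v$), then take $\Ek{\cdot}$ of $\z_x^k=\d_{\tx}^k+\xi^k(\d_x^k-\d_{\tx}^k)/p$ using $\Ek{\xi^k}=p$. The only cosmetic difference is that you subtract the two averaged recursions to get the invariant $\t_x^{k+1}-\d_{\tx}^{k+1}=\t_x^k-\d_{\tx}^k$ in one line, whereas the paper checks the cases $\xi^{k-1}=0$ and $\xi^{k-1}=1$ separately.
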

\begin{proof}
	First, by initialization, we have the relation \eqref{bar_t} for $k=0$. 
	Suppose we have shown the relation \eqref{bar_t} for $0, \ldots, k-1$. Next, we prove the relation for $k$.
	
	If $\xi^{k-1} = 0$, then $\tx_i^{k} = \tx_i^{k-1}, \ty_i^{k} = \ty_i^{k-1}$ and $\tv_i^{k} = \tv_i^{k-1}$. By induction, we have 
	$
	\t_{x}^{k} = \t_{x}^{k-1} = \d_{\tx}^{k-1} = \d_{\tx}^{k}.
	$
	If $\xi^{k-1} = 1$, then $\tx_i^{k} = x_i^{k-1}, \ty_i^{k} = y_i^{k-1}$ and $ \tv_i^{k} = v_i^{k-1}$. Again, by induction, we have
	$$
	\t_{x}^{k} = \t_{x}^{k-1} + \frac{1}{n}\sum_{i=1}^n (d_{x,i}^{k-1} - d_{\tx, i}^{k-1}) = \d_{\tx}^{k-1} + \d_{x}^{k-1} -\d_{\tx}^{k-1} = \d_{x}^{k-1} = \d_{\tx}^{k}.
	$$
	Following similar steps, we can show $\t_{y}^{k} = \d_{\ty}^{k}$ and $\t_{v}^{k} = \d_{\tv}^{k}$.
	From the update of $z_{x,i}^k$ in \eqref{xupdate},  we have
	$\bar{z}_{x}^k = \bar{t}_{x}^k + \xi^k (\d_{x}^k - \d_{\tx}^k)/p$. With $\t_{x}^k = \d_{\tx}^k$ and $\Ek{ \xi^k/p }=1$, we obtain $\Ek{\bar{z}_{x}^k} = {\bar{d}_{x}^k}$. Similarly, we also have $\Ek{\bar{z}_{y}^k} = {\bar{d}_{y}^k}$ and $\Ek{\bar{z}_{v}^k} = {\bar{d}_{v}^k}$. 
\end{proof}

\begin{lemma}\label{zx_bounded}
	The sequences $\{z_{x,i}^k\}$,  $\{d_{x,i}^k\}$ and $\{d_{\tx,i}^k\}$ generated by \eqref{ssldbo_concise} satisfy
	\begin{align}
		\Ek{\sum_{i=1}^{n}\|z_{x,i}^{k+1}-\z_x^{k+1}\|^2} &
		\leq \rho \Ek{\sum_{i=1}^n \|z_{x,i}^k - \z_{x}^k\|^2} + \frac{1}{(1-\rho)^2\rho p}{\sum_{i=1}^n \Big(\|d_{x,i}^k - d_{\tx,i}^k\|^2+\Ek{\|d_{x,i}^{k+1} - d_{\tx,i}^{k+1} \|^2}\Big)}. \label{zx-barzx}
	\end{align}
\end{lemma}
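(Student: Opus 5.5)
Write $e_i^k := \xi^k(d_{x,i}^k - d_{\tx,i}^k)$, so that $z_{x,i}^k = t_{x,i}^k + e_i^k/p$ and $t_{x,i}^{k+1} = \sum_j w_{ij}(t_{x,j}^k + e_j^k)$. The plan is to express $z_{x,i}^{k+1}$ as a mixed version of $z_{x,i}^k$ plus perturbation terms, apply the contraction from Lemma~\ref{con_sqr}(2), and then split the square with Young's inequality using weight $\rho$.

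\textbf{Step 1: a recursion for $z$.} Substituting $t_{x,j}^k = z_{x,j}^k - e_j^k/p$ gives
\[
z_{x,i}^{k+1} = \sum\nolimits_j w_{ij}\Big(z_{x,j}^k + \big(1-\tfrac1p\big)e_j^k\Big) + \tfrac{1}{p}e_i^{k+1}.
\]
Averaging over $i$ and subtracting (double stochasticity of $W$) shows that the deviation $z_{x,i}^{k+1}-\z_x^{k+1}$ equals the $W$-mixed deviation of the vectors $z_{x,j}^k + (1-1/p)e_j^k$, plus the centered vector $\frac1p(e_i^{k+1}-\bar e^{k+1})$.

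\textbf{Step 2: contraction and splitting.} Lemma~\ref{con_sqr}(2) bounds the sum of squared norms of the mixed part by $\rho^2\sum_j\|z_{x,j}^k-\z_x^k + (1-1/p)(e_j^k-\bar e^k)\|^2$. Centering only decreases sums of squares, and $|1-1/p|\le 1/p$. Applying $\|a+b\|^2\le (1+c)\|a\|^2+(1+1/c)\|b\|^2$ twice, with parameters chosen so that the coefficient on $\sum\|z_{x,i}^k-\z_x^k\|^2$ becomes $\rho$, leaves remaining coefficients of order $1/((1-\rho)p^2)$. Concretely, combine the two perturbations via $\|a+b\|^2\le(1+c)\|a\|^2+(1+1/c)\|b\|^2$ with $1+c=1/\rho$; this gives $\rho\|\cdot\|^2$ on the $z$ term and a factor $1/(1-\rho)$ on the $e$ terms. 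A second split, to separate the $e^k$ and $e^{k+1}$ pieces, costs at most another factor $2/(1-\rho)$ or so, which fits under $\frac{1}{(1-\rho)^2\rho}$.

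\textbf{Step 3: take $\mathbb{E}_k$ to recover the $1/p$ scaling.} Since $\xi^k$ is independent of $\mathcal{F}_k$ and $(\xi^k)^2=\xi^k$, we have $\Ek{\|e_i^k\|^2} = p\|d_{x,i}^k-d_{\tx,i}^k\|^2$, which turns $1/p^2$ into $1/p$. For $e_i^{k+1}$, use $\|e_i^{k+1}\|^2\le\|d_{x,i}^{k+1}-d_{\tx,i}^{k+1}\|^2$ and keep it under $\mathbb{E}_k$. The difficulty here is that $\xi^{k+1}$ is not $\mathcal{F}_{k+1}$-independent inside $\mathbb{E}_k$ unless we condition further. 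To handle this, first take $\mathbb{E}_{k+1}$, which gives the factor $p$ for $\xi^{k+1}$, and then apply the tower property to return to $\mathbb{E}_k$, producing $p\,\Ek{\|d^{k+1}_{x,i}-d^{k+1}_{\tx,i}\|^2}$. With $1/p^2$ in front, the net coefficient is $1/p$.

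The main obstacle is this expectation bookkeeping: $z^{k+1}$ depends on $\xi^{k+1}$, so the left side must be read with $\xi^{k+1}$ averaged out as well. I would verify the constants at the end to confirm that $\frac{1}{(1-\rho)^2\rho p}$ dominates the product of the Young factors, using $\rho<1$.
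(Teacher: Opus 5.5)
Your proposal is correct and follows essentially the paper's proof. It uses the same recursion $z_{x,i}^{k+1}=\sum_j w_{ij}\big(z_{x,j}^k+(1-1/p)\triangle_{x,j}^k\big)+\triangle_{x,i}^{k+1}/p$, Young's inequality with weight $1/\rho$ on the mixed $z$-deviation (so $\rho^2/\rho=\rho$) and $1/(1-\rho)$ on the combined perturbation, and then $\mathbb{E}[(\xi^k)^2]=p$ with the tower property for $\xi^{k+1}$. The paper drops $-n\|\bar{z}_x^{k+1}-\bar{z}_x^k\|^2$ where you center directly, which amounts to the same thing.

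The only point to tighten is the second split. Use a plain factor $2$ (or the paper's $1/(1-\rho)$, $1/\rho$ weights), not $2/(1-\rho)$. A factor $2/(1-\rho)$ fits under $1/((1-\rho)^2\rho p)$ only when $\rho\le 1/2$, whereas $2/((1-\rho)p)\le 1/((1-\rho)^2\rho p)$ always holds because $\rho(1-\rho)\le 1/4$.
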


\begin{proof}
	By using the update of $z_{x,i}^{k+1}$ in \eqref{xupdate} and defining $\triangle_{x,i}^{k+1} = \xi^{k+1} (d_{x,i}^{k+1} - d_{\tx,i}^{k+1})$, we have 
	\begin{align*}
		z_{x,i}^{k+1}  
		= t_{x,i}^{k+1} + \frac{1}{p} \triangle_{x,i}^{k+1} = \sum_{j=1}^n w_{ij} (t_{x,j}^{k} + \triangle_{x,j}^{k}) + \frac{1}{p}\triangle_{x,i}^{k+1}
		= \sum_{j=1}^n w_{ij} \big(z_{x,j}^{k} + (1-\frac{1}{p}) \triangle_{x,j}^{k}\big) + \frac{1}{p}\triangle_{x,i}^{k+1}.
	\end{align*}

{
    Therefore, we have
\begin{align}
	&\;\Ek{\sum_{i=1}^n \|z_{x,i}^{k+1} - \z_{x}^{k+1}\|^2} 
	=\;\Ek{\sum_{i=1}^n \|
	\sum_{j=1}^n w_{ij}(z_{x,j}^{k}- \frac{1-p}{p}\triangle_{x,j}^{k})
	+ \frac{1}{p}\triangle_{x,i}^{k+1} - \z_{x}^{k} + \z_{x}^{k} -\z_{x}^{k+1}
	\|^2} \nonumber\\
	=&\;\Ek{\sum_{i=1}^n \|
	\sum_{j=1}^n w_{ij}z_{x,j}^{k} - \z_{x}^{k}
	+ \frac{1}{p}\triangle_{x,i}^{k+1}
	- \frac{1-p}{p}\sum_{j=1}^n w_{ij}\triangle_{x,j}^{k}
	\|^2
	+ n\|\z_{x}^{k} - \z_{x}^{k+1}\|^2 }
	+ \Ek{2\sum_{i=1}^n
	\left\langle z_{x,i}^{k+1} - \z_{x}^{k},
	\z_{x}^{k} - \z_{x}^{k+1}\right\rangle}. \nonumber
\end{align}
For the cross term, since the second factor is independent of $i$, we have
\begin{align}
	&\;2\sum_{i=1}^n
	\left\langle z_{x,i}^{k+1} - \z_{x}^{k},
	\z_{x}^{k} - \z_{x}^{k+1}\right\rangle
	=\;2\left\langle
	\sum_{i=1}^n (z_{x,i}^{k+1} - \z_x^k),
	\z_x^k - \z_x^{k+1}
	\right\rangle \nonumber\\
	=&\;2n\left\langle
	\z_x^{k+1} - \z_x^k,
	\z_x^k - \z_x^{k+1}
	\right\rangle 
	=\;-2n\|\z_x^{k+1}-\z_x^k\|^2. \nonumber
\end{align}
Therefore,
\begin{align}
	&\;\Ek{\sum_{i=1}^n \|z_{x,i}^{k+1} - \z_{x}^{k+1}\|^2} \nonumber\\
	=&\;\Ek{\sum_{i=1}^n \|
	\sum_{j=1}^n w_{ij}z_{x,j}^{k} - \z_{x}^{k}
	+ \frac{1}{p}\triangle_{x,i}^{k+1}
	- \frac{1-p}{p}\sum_{j=1}^n w_{ij}\triangle_{x,j}^{k}
	\|^2
	- n\|\z_x^{k+1}-\z_x^k\|^2} \nonumber\\
	\leq&\;\Ek{\sum_{i=1}^n \|
	\sum_{j=1}^n w_{ij}z_{x,j}^{k} - \z_{x}^{k}
	+ \frac{1}{p}\triangle_{x,i}^{k+1}
	- \frac{1-p}{p}\sum_{j=1}^n w_{ij}\triangle_{x,j}^{k}
	\|^2} \nonumber\\
	\leq&\;\rho \Ek{\sum_{i=1}^n \|z_{x,i}^k - \z_{x}^k\|^2}
	+ \frac{1}{1-\rho}\Ek{\sum_{i=1}^n
	\|\frac{1}{p}\triangle_{x,i}^{k+1}
	- \frac{1-p}{p}\sum_{j=1}^n w_{ij}\triangle_{x,j}^{k}
	\|^2} \nonumber\\
	\leq&\; \rho \Ek{\sum_{i=1}^n \|z_{x,i}^k - \z_{x}^k\|^2}
	+ \frac{1}{(1-\rho)^2\rho p}
	{\sum_{i=1}^n \Big(
	\|d_{x,i}^k - d_{\tx,i}^k\|^2
	+\Ek{\|d_{x,i}^{k+1} - d_{\tx,i}^{k+1}\|^2}
	\Big)}. \nonumber
\end{align}
where the first inequality follows from dropping the nonpositive term
``$-n\|\z_x^{k+1}-\z_x^k\|^2$". The second inequality follows from expanding the square, applying
$2\langle a,b\rangle \leq \frac{\rho}{1-\rho}\|a\|^2
+ \frac{1-\rho}{\rho}\|b\|^2$, and then using Lemma \ref{con_sqr}.
This introduces a prefactor $1/\rho$, which combines with the $\rho^2$ contraction in Lemma \ref{con_sqr} and yields the factor $\rho$.
}
    And the last inequality follows from
	\begin{align*}  
		&\Ek{\sum_{i=1}^n \|\frac{1}{p}\triangle_{x,i}^{k+1} - \frac{1-p}{p}\sum_{j=1}^n w_{ij}\triangle_{x,j}^{k}\|^2}\\ 
		\leq & \frac{1}{(1-\rho)p^2}
		\Ek{(\xi^{k+1})^2}
		\Ek{\sum_{i=1}^n \|d_{x,i}^{k+1} - d_{\tx,i}^{k+1}\|^2} + \frac{(1-p)^2}{\rho p^2}
		\Ek{(\xi^{k})^2}
		\sum_{i=1}^n \|\sum_{j=1}^n w_{ij}(d_{x,j}^{k} - d_{\tx,j}^{k})\|^2 \\
		= & \frac{1}{(1-\rho)p}\Ek{\sum_{i=1}^n \|d_{x,i}^{k+1} - d_{\tx,i}^{k+1}\|^2} + \frac{(1-p)^2}{\rho p} {\sum_{i=1}^n \|\sum_{j=1}^n w_{ij}(d_{x,j}^{k} - d_{\tx,j}^{k})\|^2} \\
		\leq &\frac{1}{\rho(1-\rho)p}{\sum_{i=1}^n \Big(\Ek{\|d_{x,i}^{k+1} - d_{\tx,i}^{k+1}\|^2}+\|d_{x,i}^{k} - d_{\tx,i}^{k} \|^2\Big)},
	\end{align*}
	where the last inequality follows from multiplying the first term by $1/\rho$, the second term by $1/\big((1-\rho)(1-p)^2\big)$, and then applying Lemma \ref{con_sqr} (a). 
\end{proof}
\begin{remark}\label{rmkA1}
	The following inequalities can be derived by following similar steps as in Lemma \ref{zx_bounded}. Details are omitted to avoid redundancy.
	\begin{align*}
		\Ek{\sum_{i=1}^{n}\|z_{y,i}^{k+1}-\z_y^{k+1}\|^2} &
		\leq \rho \Ek{\sum_{i=1}^n \|z_{y,i}^k - \z_{y}^k\|^2} + \frac{1}{(1-\rho)^2\rho p}{\sum_{i=1}^n \Big(\|d_{y,i}^k - d_{\ty,i}^k\|^2+\Ek{\|d_{y,i}^{k+1} - d_{\ty,i}^{k+1} \|^2}\Big)},   
		\\
		\Ek{\sum_{i=1}^{n}\|z_{v,i}^{k+1}-\z_v^{k+1}\|^2} &
		\leq \rho \Ek{\sum_{i=1}^n \|z_{v,i}^k - \z_{v}^k\|^2} + \frac{1}{(1-\rho)^2\rho p}{\sum_{i=1}^n \Big(\|d_{v,i}^k - d_{\tv,i}^k\|^2+\Ek{\|d_{v,i}^{k+1} - d_{\tv,i}^{k+1} \|^2}\Big)}.  
	\end{align*}
\end{remark}

Then, we will bound the terms related to the consensus error in \eqref{Lyapunov}.
\begin{lemma}\label{bar}
	The sequence $\{v_i^k\}$ generated by \eqref{ssldbo_concise} satisfies
	\begin{align*}
		\Ek{\sum_{i=1}^{n}\|v_i^{k+1}-\v^{k+1}\|^2} \leq \rho {\sum_{i=1}^{n}\|v_i^{k} - \v^k\|^2} + \frac{\rho^2\eta ^2}{1-\rho} \Ek{\sum_{i=1}^{n}\|z_{v,i}^k - \z_v^k\|^2}.
	\end{align*}
\end{lemma}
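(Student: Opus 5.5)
The plan is to handle the projection first with Lemma \ref{proj} and then split the pre-projection consensus error with a weighted Young inequality. Define the pre-projection point $u_i^k := \sum_{j=1}^n w_{ij}(v_j^k+\eta z_{v,j}^k)$, so that $v_i^{k+1}=\mathcal{P}_{r_v}[u_i^k]$ by \eqref{vupdate}. Since all $v_i^{k+1}$ are projections onto the same ball, Lemma \ref{proj} gives
\[
\sum_{i=1}^n\|v_i^{k+1}-\v^{k+1}\|^2 \le \sum_{i=1}^n\|u_i^k-\bar u^k\|^2 .
\]
Because $W$ is doubly stochastic, $\bar u^k = \v^k+\eta\z_v^k$. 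Hence
\[
u_i^k-\bar u^k=\sum_{j=1}^n w_{ij}\big[(v_j^k-\v^k)+\eta(z_{v,j}^k-\z_v^k)\big].
\]
This inequality holds pathwise, so it survives conditional expectation.

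Next, expand the square and apply Young's inequality in the form $\|a+b\|^2\le (1+c)\|a\|^2+(1+1/c)\|b\|^2$ with $c=(1-\rho)/\rho$. Then $1+c=1/\rho$ and $1+1/c=1/(1-\rho)$. Applying part (b) of Lemma \ref{con_sqr} to each piece gives
\[
\sum_i\|u_i^k-\bar u^k\|^2\le \frac{1}{\rho}\rho^2\sum_i\|v_i^k-\v^k\|^2+\frac{1}{1-\rho}\rho^2\eta^2\sum_i\|z_{v,i}^k-\z_v^k\|^2 .
\]
This is exactly the claimed bound with coefficients $\rho$ and $\rho^2\eta^2/(1-\rho)$.

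Finally, take $\mathbb{E}_k$ of both sides. The first term on the right is $\mathcal F_k$-measurable, so its expectation drops. The $z_v^k$ term depends on $\xi^k$ and keeps its $\mathbb{E}_k$, which matches the statement. There is no real obstacle here. The only points to check are that Lemma \ref{proj} applies to the projection taken at each node with a common radius $r_v$, and that the averaging commutes with $W$. The Young parameter must be chosen so that it combines with $\rho^2$ to produce the constants $\rho$ and $\rho^2/(1-\rho)$.
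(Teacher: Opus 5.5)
Your proposal is correct and follows the paper's proof. Both arguments remove the projection with Lemma \ref{proj}, split the pre-projection consensus error by Young's inequality with weights $1/\rho$ and $1/(1-\rho)$, and then apply the contraction in Lemma \ref{con_sqr}(b) to each piece; you merely make explicit the step $\bar u^k=\v^k+\eta\z_v^k$, which the paper uses implicitly.
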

\begin{proof}
	In fact, we have
	\begin{align*}
		\Ek{\sum_{i=1}^{n}\|v_i^{k+1}&-\v^{k+1}\|^2}
		=\Ek{\sum_{i=1}^{n} \|\mathcal{P}_{r_v}\left[\sum\nolimits_{j=1}^{n}w_{ij}(v_j^{k}+\eta z_{v,j}^k)\right]-\frac{1}{n}\sum_{s=1}^{n}\mathcal{P}_{r_v}\left[\sum\nolimits_{j=1}^{n}w_{sj}(v_j^{k}+\eta z_{v,j}^k)\right]\|^2}\\
		& \leq  \Ek{\sum_{i=1}^{n} \|\sum_{j=1}^{n}w_{ij}(v_j^{k}+\eta z_{v,j}^k)-\frac{1}{n}\sum_{s=1}^{n}\sum_{j=1}^{n}w_{sj}(v_j^{k}+\eta z_{v,j}^k)\|^2}\\
		&\leq  \frac{1}{\rho}  \Ek{\sum_{i=1}^{n}\|\sum\nolimits_{j=1}^{n}w_{ij}v_j^{k} - \v^k\|^2} 
		+ \frac{\eta^2}{1-\rho} 
		\Ek{\sum_{i=1}^{n}\left\|\sum\nolimits_{j=1}^{n}w_{ij}z_{v,j}^k - \z_v^k\right\|^2}\\
		&\leq  \rho {\sum_{i=1}^{n}\|v_i^{k} - \v^k\|^2} + \frac{\rho^2\eta ^2}{1-\rho} \Ek{\sum_{i=1}^{n}\|z_{v,i}^k - \z_v^k\|^2} ,
	\end{align*}
	where the first inequality follows from Lemma \ref{proj},
	the second follows from Cauchy-Schwarz inequality and the notation defined in \eqref{def:bar9},
	the third follows from Lemma \ref{con_sqr} (b). 
\end{proof}

\begin{remark}\label{bar_re}
	By following similar deductions as in Lemma \ref{bar}, we can derive
	\begin{align*}
		\Ek{\sum_{i=1}^{n}\|x_i^{k+1}-\x^{k+1}\|^2} \leq \rho{\sum_{i=1}^{n}\|x_i^{k} - \x^k\|^2} + \frac{\rho^2\alpha ^2}{1-\rho} \Ek{\sum_{i=1}^{n}\|z_{x,i}^k - \z_x^k\|^2},\\ 
		\Ek{\sum_{i=1}^{n}\|y_i^{k+1}-\y^{k+1}\|^2} \leq \rho{\sum_{i=1}^{n}\|y_i^{k} - \y^k\|^2} + \frac{\rho^2\beta ^2}{1-\rho} \Ek{\sum_{i=1}^{n}\|z_{y,i}^k - \z_y^k\|^2}. 
	\end{align*}
	The details are omitted.
\end{remark}

Next, we bound the term $\Ek{\sum_{i=1}^n \|d_{x,i}^k-d_{\tx,i}^k\|^2}$ in \eqref{zx-barzx}, as well as $\Ek{\sum_{i=1}^n \|d_{y,i}^k-d_{\ty,i}^k\|^2}$ and $\Ek{\sum_{i=1}^n \|d_{v,i}^k-d_{\tv,i}^k\|^2}$. 
\begin{lemma}\label{dk+1-dk}
	The sequences  $\{d_{x,i}^{k}\}$, $\{d_{v,i}^{k}\}$ and $\{d_{y,i}^{k}\}$ generated by \eqref{ssldbo_concise} satisfy
	\begin{align}
		\Ek{\sum_{i=1}^{n}\|d_{x,i}^{k}-d_{\tx,i}^{k}\|^2}&\leq C_1 {\sum_{i=1}^{n}\left(\|x_i^k-\tx_i^{k}\|^2+\|y_i^k-\ty_i^{k}\|^2+\|v_i^k-\tv_i^{k}\|^2\right)},\label{dx}\\
		\Ek{\sum_{i=1}^{n}\|d_{v,i}^{k}-d_{\tv,i}^{k}\|^2}&\leq C_1 {\sum_{i=1}^{n}\left(\|x_i^k-\tx_i^{k}\|^2+\|y_i^k-\ty_i^{k}\|^2+\|v_i^k-\tv_i^{k}\|^2\right)},\label{dv}\\
		\Ek{\sum_{i=1}^{n}\|d_{y,i}^{k}-d_{\ty,i}^{k}\|^2}&\leq C_2  {\sum_{i=1}^{n}\left(\|x_i^k-\tx_i^{k}\|^2+\|y_i^k-\ty_i^{k}\|^2\right)}\label{dy},
	\end{align}
	where $C_1$ and $C_2$ are defined in \eqref{def-constants-proof}.
\end{lemma}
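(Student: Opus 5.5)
The plan is a pointwise, node-by-node Lipschitz argument. The conditional expectation $\mathbb{E}_k$ plays no real role, because every quantity involved is $\mathcal{F}_k$-measurable. By \eqref{yd}--\eqref{xd} and the representations \eqref{tyd}--\eqref{txd} (valid for index $k$; at $k=0$ we use the initialization, or note that the snapshot formulas hold with $\tx_i^0$ and the like defined accordingly), each difference $d_{\cdot,i}^k-d_{\tilde\cdot,i}^k$ is the same map evaluated at $(x_i^k,y_i^k,v_i^k)$ and at $(\tx_i^k,\ty_i^k,\tv_i^k)$. So it suffices to show that, for each $i$, the maps
\begin{align*}
(x,y,v)&\mapsto \nabla_1F_i(x,y)-\nabla^2_{12}f_i(x,y)v,\\
(x,y,v)&\mapsto \nabla_2F_i(x,y)-\nabla^2_{22}f_i(x,y)v,\\
(x,y)&\mapsto\nabla_2 f_i(x,y)
\end{align*}
are Lipschitz with the squared constants $C_1$ (first two) and $C_2$ (third) on the relevant set. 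The bounds then follow by summing over $i$.

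\textbf{Case \eqref{dy}.} By Assumption \ref{Assump1}(4), $\|\nabla_2 f_i(x,y)-\nabla_2 f_i(\tx,\ty)\|\le\|\nabla f_i(x,y)-\nabla f_i(\tx,\ty)\|\le L_{f,1}\|(x-\tx,y-\ty)\|$. Squaring gives $C_2(\|x-\tx\|^2+\|y-\ty\|^2)$.

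\textbf{Case \eqref{dx}.} Split the difference into three terms:
\[
[\nabla_1F_i(x,y)-\nabla_1F_i(\tx,\ty)]-[\nabla^2_{12}f_i(x,y)-\nabla^2_{12}f_i(\tx,\ty)]v-\nabla^2_{12}f_i(\tx,\ty)(v-\tv),
\]
and apply $\|a+b+c\|^2\le3(\|a\|^2+\|b\|^2+\|c\|^2)$.
\begin{itemize}
\item The first term is bounded by $L_{F,1}\|(x-\tx,y-\ty)\|$.
\item The second term is bounded by $L_{f,2}\|v\|\,\|(x-\tx,y-\ty)\|\le L_{f,2}r_v\|(x-\tx,y-\ty)\|$. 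This uses $\|v_i^k\|\le r_v$, which holds by the projection in \eqref{vupdate} and the initialization $\|v_i^0\|\le r_v$.
\item The third term is bounded by $\|\nabla^2_{12}f_i\|_{\rm op}\|v-\tv\|\le L_{f,1}\|v-\tv\|$, since $\nabla f_i$ is $L_{f,1}$-Lipschitz and hence its Jacobian blocks have operator norm at most $L_{f,1}$.
\end{itemize}
To land on the stated constant $C_1=3\max\{(L_{F,1}+r_vL_{f,2})^2,L_{f,1}^2\}$, I will not split into three terms but combine the first two as one term, $\|a\|\le (L_{F,1}+r_vL_{f,2})\|(x-\tx,y-\ty)\|$, and keep $c$ separate. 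Then
\[
\|a+c\|^2\le 2\|a\|^2+2\|c\|^2\le 3\max\{\cdot\}\bigl(\|x-\tx\|^2+\|y-\ty\|^2+\|v-\tv\|^2\bigr),
\]
which is comfortably within $C_1$.

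\textbf{Case \eqref{dv}.} This is identical to \eqref{dx}, with $\nabla_2F_i$ and $\nabla^2_{22}f_i$ in place of $\nabla_1F_i$ and $\nabla^2_{12}f_i$.

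The only delicate points are bookkeeping ones. First, the bound $\|v\|\le r_v$ must apply to the point where the Hessian is multiplied; in the split above this is $v=v_i^k$, which is always a projected iterate or the initial point. Second, the Hessian block operator norm must be bounded by $L_{f,1}$ (and the Frobenius-versus-operator norm distinction handled), which follows since the Hessian is the derivative of an $L_{f,1}$-Lipschitz gradient. Third, $k=0$ is covered by consistent initialization of the snapshot variables.
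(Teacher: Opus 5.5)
Your proposal is correct and is essentially the paper's proof. It uses the same node-wise Lipschitz argument through the snapshot representations \eqref{tyd}--\eqref{txd}, and the same three-piece splitting: $\|v_i^k\|\le r_v$ on the Jacobian/Hessian-difference piece and the operator-norm bound $L_{f,1}$ on the $(v_i^k-\tv_i^k)$ piece. Grouping the first two pieces (factor $2$ instead of $3$) is cosmetic and not needed, since the paper's three-way split already lands on $C_1$ because $L_{F,1}^2+r_v^2L_{f,2}^2\le (L_{F,1}+r_vL_{f,2})^2$.

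Your $k=0$ caveat flags something the paper silently skips. With the zero initialization $d_{\tx,i}^0=0$ of Algorithm~\ref{alg:sslDBo}, the representation \eqref{txd} (and likewise \eqref{tyd}, \eqref{tvd}) need not hold before the first iteration with $\xi^k=1$. So ``defined accordingly'' really requires re-initializing the snapshot consistently, e.g.\ $\tx_i^0=x_i^0$ with $d_{\tx,i}^0$ the derivative evaluated there; it is not just a remark.
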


\begin{proof}
	By the definition of $d_{x,i}^k$ in \eqref{xd}  
	and the representation of $d_{\tx,i}^k$ in \eqref{txd}, it follows that 
	\begin{align*}
		&\Ek{\sum_{i=1}^{n}\|d_{x,i}^{k}-d_{\tx,i}^{k}\|^2}
		\leq 3\Ek{\sum_{i=1}^{n}\|\nabla_1 F_i(x^k_i, y^k_i)-\nabla_1F_i(\tx^{k}_i, \ty^{k}_i)\|^2}\\
		&\ \ \quad  +3\Ek{\sum_{i=1}^{n}\|(\nabla^2_{12} f_i (x^k_i, y^k_i) -\nabla^2_{12} f_i (\tx^{k}_i, \ty^{k}_i)) v^{k}_i\|^2} +3\Ek{\sum_{i=1}^{n}\|\nabla^2_{12} f_i (\tx^{k}_i, \ty^{k}_i)(v^{k}_i-\tv^{k}_i)\|^2}\\
		\leq & \, 3(L_{F,1}+r_{v}L_{f,2})^2 \Ek{\sum_{i=1}^{n}\|x^k_i-\tx^{k}_i\|^2}+ 3(L_{F,1}+r_{v}L_{f,2})^2\Ek{\sum_{i=1}^{n}\|y^k_i-\ty^{k}_i\|^2} \\
		& +3L_{f,1}^2\Ek{\sum_{i=1}^{n}\|v^k_i-\tv^{k}_i\|^2}
		\leq C_1 {\sum_{i=1}^{n}\left(\|x_i^k-\tx_i^{k}\|^2+\|y^k_i-\ty^{k}_i\|^2+\|v_i^k-\tv_i^{k}\|^2\right)},
	\end{align*}
	where the second inequality is due to Assumption \ref{Assump1} (b) and (c) and $\|v_i^k\| \leq r_v$ for all $i$.
	Similarly, by \eqref{vd} and \eqref{tvd}, we can  derive 
	\begin{align*}
		\Ek{\sum_{i=1}^{n}\|d_{v,i}^{k}-d_{\tv,i}^{k}\|^2}\leq C_1 {\sum_{i=1}^{n}\left(\|x_i^k-\tx_i^{k}\|^2+\|y^k_i-\ty^{k}_i\|^2+\|v_i^k-\tv_i^{k}\|^2\right)},
	\end{align*}
	and by 
	\eqref{yd} and \eqref{tyd}, we can also derive
	\begin{align*}
		&\Ek{\sum_{i=1}^{n}\|d_{y,i}^{k}-d_{\ty,i}^{k}\|^2}\leq \Ek{\sum_{i=1}^{n}\|\nabla_2 f_i(x_i^k,y_i^{k})-\nabla_2 f_i(\tx_i^{k},\ty_i^{k})\|^2}\\
		&\ \ \leq L_{f,1}^2\Ek{\sum_{i=1}^{n}\|x_i^k-\tx_i^{k}\|^2} + L_{f,1}^2\Ek{\sum_{i=1}^{n}\|y_i^k-\ty_i^{k}\|^2}
		= C_2 {\sum_{i=1}^{n}\left(\|x_i^k-\tx_i^{k}\|^2+\|y_i^k-\ty_i^{k}\|^2\right)}.
	\end{align*}
	This completes the proof.
\end{proof}

\begin{lemma}\label{lem:xk+1-txk+1}
	The sequences $\{x^k_i\}$ and $\{\tx^k_i\}$ generated by \eqref{ssldbo_concise} satisfy
	\begin{align}
		\Ek{\sum_{i=1}^n\|x_i^{k+1} - \tx_i^{k+1}\|^2} \leq 
		\frac{1-p}{\rho}{\sum_{i=1}^n\|x_i^k - \tx_i^k\|^2} + \frac{1}{1-\rho}\Ek{\sum_{i=1}^n\|x_i^{k+1} - x_i^{k}\|^2}.\label{xk+1-txk+1}
	\end{align}
\end{lemma}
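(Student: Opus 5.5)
The plan is to condition on $\xi^k$ and use the snapshot update $\tx_i^{k+1} = \xi^k x_i^k + (1-\xi^k)\tx_i^k$ from \eqref{txupdate}. Since $\xi^k$ is independent of $\mathcal{F}_k$, the conditional expectation splits into the two Bernoulli cases. If $\xi^k=1$, then $\tx_i^{k+1}=x_i^k$, so $\|x_i^{k+1}-\tx_i^{k+1}\|^2 = \|x_i^{k+1}-x_i^k\|^2$. If $\xi^k=0$, then $\tx_i^{k+1}=\tx_i^k$. In that case I write $x_i^{k+1}-\tx_i^{k+1} = (x_i^k-\tx_i^k) + (x_i^{k+1}-x_i^k)$ and apply Young's inequality $\|a+b\|^2\le (1+c)\|a\|^2 + (1+1/c)\|b\|^2$. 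I will choose $c$ so that $1+c = 1/\rho$, which gives $1+1/c = 1/(1-\rho)$.

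Combining the two cases gives
\begin{align*}
\sum_{i=1}^n\|x_i^{k+1}-\tx_i^{k+1}\|^2 \le (1-\xi^k)\frac{1}{\rho}\sum_{i=1}^n\|x_i^k-\tx_i^k\|^2 + \Big(\xi^k + \frac{1-\xi^k}{1-\rho}\Big)\sum_{i=1}^n\|x_i^{k+1}-x_i^k\|^2 .
\end{align*}
Since $1/(1-\rho)\ge 1$, the second coefficient is at most $1/(1-\rho)$ in both cases. Taking $\mathbb{E}_k$, the first term is $\mathcal{F}_k$-measurable apart from the factor $1-\xi^k$, and $\mathbb{E}_k[1-\xi^k]=1-p$. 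This gives exactly the coefficient $\frac{1-p}{\rho}$. The second term is bounded by $\frac{1}{1-\rho}\Ek{\sum_i\|x_i^{k+1}-x_i^k\|^2}$, and I leave it inside the expectation because $x_i^{k+1}$ depends on $\xi^k$ through $z_{x,i}^k$.

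The only delicate point is the first term. The quantity $\sum_i\|x_i^k-\tx_i^k\|^2$ is $\mathcal{F}_k$-measurable, but it is multiplied by the random indicator $1-\xi^k$, so I must bound it before taking expectations rather than bounding the second term jointly. Keeping the indicator factor separate in the pointwise inequality above handles this, and I do not foresee any real obstacle beyond choosing the Young parameter to match the stated constants.
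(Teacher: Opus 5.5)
Your proof is correct and is essentially the paper's argument. The paper writes $x_i^{k+1}-\tx_i^{k+1} = (x_i^{k+1}-x_i^k) + (1-\xi^k)(x_i^k-\tx_i^k)$ and applies the same Young inequality with weights $\frac{1}{1-\rho}$ and $\frac{1}{\rho}$ uniformly in $\xi^k$, then uses $\mathbb{E}_k[(1-\xi^k)^2]=1-p$; your case split on $\xi^k$ simply unrolls this, being slightly tighter when $\xi^k=1$ before you relax that coefficient to $\frac{1}{1-\rho}$.
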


\begin{proof}
	From the update of $\tx_i^{k+1}$ in \eqref{txupdate}, we get 
	\begin{align*}
		\Ek{\sum_{i=1}^n\|x_i^{k+1} - \tx_i^{k+1}\|^2}
		&= \Ek{\sum_{i=1}^n\|x_i^{k+1} - x_i^{k} + x_i^k - \tx_i^{k+1}\|^2}\\
		&\leq \frac{1}{1-\rho}\Ek{\sum_{i=1}^n\|x_i^{k+1} - x_i^{k}\|^2} + 
		\frac{1}{\rho}\Ek{\sum_{i=1}^n\|(1-\xi^k)(x_i^k - \tx_i^k)\|^2}\\
		&= \frac{1}{1-\rho}\Ek{\sum_{i=1}^n\|x_i^{k+1} - x_i^{k}\|^2} + \frac{1-p}{\rho}{\sum_{i=1}^n\|x_i^k - \tx_i^k\|^2},
	\end{align*}
	where the inequality is due to the Cauchy-Schwarz inequality.
\end{proof}

\begin{remark}
	The following inequalities can be derived by following similar steps as in Lemma \ref{lem:xk+1-txk+1} (details are omitted to avoid redundancy):
	\begin{align}
		\Ek{\sum_{i=1}^n\|v_i^{k+1} - \tv_i^{k+1}\|^2} \leq 
		\frac{1-p}{\rho}{\sum_{i=1}^n\|v_i^k - \tv_i^k\|^2} + \frac{1}{1-\rho}\Ek{\sum_{i=1}^n\|v_i^{k+1} - v_i^{k}\|^2}, \label{vk+1-tvk+1}\\
		\Ek{\sum_{i=1}^n\|y_i^{k+1} - \ty_i^{k+1}\|^2} \leq 
		\frac{1-p}{\rho}{\sum_{i=1}^n\|y_i^k - \ty_i^k\|^2} + \frac{1}{1-\rho}\Ek{\sum_{i=1}^n\|y_i^{k+1} - y_i^{k}\|^2}.\label{yk+1-tyk+1}
	\end{align}
\end{remark}

Next, we bound the term $\Ek{\sum_{i=1}^n \|v_i^{k+1} - v_i^k\|^2}$ in \eqref{vk+1-tvk+1}.

\begin{lemma}\label{lem:vk+1-vk}
	The sequence $\{v_i^k\}$ generated by \eqref{ssldbo_concise} satisfies
	\begin{align*}
		\Ek{\sum_{i=1}^n \|v_i^{k+1} - v_i^k\|^2} \leq& 8{\sum_{i=1}^n \|v_i^k -\v^k\|^2} + 4\eta^2 \Ek{\sum_{i=1}^n \|z_{v,i}^k -\z_{v}^k\|^2} + 4n\eta^2{\|\d_{v}^k\|^2}
		+4\eta^2\frac{1-p}{p}{\sum_{i=1}^n\|{d}_{v,i}^k - {d}_{\tv,i}^k\|^2}.
	\end{align*}
\end{lemma}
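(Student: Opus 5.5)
The plan is to write $v_i^{k+1}-v_i^k$ as a difference of projections and use nonexpansiveness of $\mathcal{P}_{r_v}$. Since $\|v_i^k\|\le r_v$ for every $k$ (by initialization and projection), we have $v_i^k=\mathcal{P}_{r_v}[v_i^k]$. Therefore
\[
\|v_i^{k+1}-v_i^k\| \le \Big\|\sum\nolimits_{j} w_{ij}(v_j^k+\eta z_{v,j}^k) - v_i^k\Big\|.
\]
Next, I will split the vector on the right into four pieces:
\[
\Big(\sum\nolimits_j w_{ij}v_j^k - \v^k\Big) + (\v^k - v_i^k) + \eta\Big(\sum\nolimits_j w_{ij}z_{v,j}^k - \z_v^k\Big) + \eta\,\z_v^k .
\]
By $\|a+b+c+d\|^2\le 4(\|a\|^2+\|b\|^2+\|c\|^2+\|d\|^2)$ and summation over $i$, Lemma \ref{con_sqr}(b) (with $\rho^2\le1$) handles the first and third pieces. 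The first two pieces together give at most $8\sum_i\|v_i^k-\v^k\|^2$. The third gives at most $4\eta^2\sum_i\|z_{v,i}^k-\z_v^k\|^2$. The fourth gives $4n\eta^2\|\z_v^k\|^2$.

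It remains to take $\mathbb{E}_k$ of $\|\z_v^k\|^2$; this is the only step with real content. By \eqref{bar_t}, $\z_v^k=\d_{\tv}^k+\frac{\xi^k}{p}(\d_v^k-\d_{\tv}^k)$, so
\[
\z_v^k-\d_v^k=\Big(\tfrac{\xi^k}{p}-1\Big)(\d_v^k-\d_{\tv}^k),
\]
where $\d_v^k,\d_{\tv}^k$ are $\mathcal{F}_k$-measurable. Using \eqref{bar_z} and $\mathbb{E}_k[(\xi^k/p-1)^2]=\frac{1-p}{p}$, the bias–variance identity gives
\[
\Ek{\|\z_v^k\|^2}=\|\d_v^k\|^2+\frac{1-p}{p}\|\d_v^k-\d_{\tv}^k\|^2 .
\]
Jensen's inequality then yields $n\|\d_v^k-\d_{\tv}^k\|^2\le\sum_i\|d_{v,i}^k-d_{\tv,i}^k\|^2$.

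Multiplying by $4n\eta^2$ and collecting the terms gives exactly the stated bound. The main care point is the expectation of the snapshot estimator, namely keeping the factor $\frac{1-p}{p}$ rather than the cruder $\frac1p$. The other steps are routine.
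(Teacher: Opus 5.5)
Your proposal is correct and follows the paper's proof essentially line by line. The common steps are nonexpansiveness of $\mathcal{P}_{r_v}$ (using $\|v_i^k\|\le r_v$), the same four-term split with the factor $4$ and Lemma \ref{con_sqr}, the vanishing cross term from $\mathbb{E}_k[\bar{z}_v^k]=\d_v^k$, and the variance computation $\mathbb{E}_k[(1-\xi^k/p)^2]=\frac{1-p}{p}$ combined with Jensen to pass from $n\|\d_v^k-\d_{\tv}^k\|^2$ to $\sum_i\|d_{v,i}^k-d_{\tv,i}^k\|^2$. The only cosmetic difference is that the paper applies Jensen before averaging over $\xi^k$, whereas you take the expectation first; the two orders are equivalent.
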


\begin{proof}
	It follows from the update of $z_{v,i}^k$ in \eqref{vupdate} that 
	$$
	\bar{z}_{v}^k = \bar{t}_v^k + \xi^k (\d_{v}^k - \d_{\tv}^k) / p
	\stackrel{(\ref{bar_t})}=\d_{\tv}^k + \xi^k (\d_{v}^k - \d_{\tv}^k)/p.
	$$
	Then, we have  $\z^k_{v}-\d^k_{v} = (1-\xi^k / p) (\d_{\tv}^k-\d_{v}^k)$. 
	Since $\Ek{\xi^k}=p$, we have $\Ek{1- \xi^k/p}=0$. Thus, we have
	\begin{align}
		\Ek{\langle \bar{z}_{v}^k-\bar{d}_v^k, \bar{d}_v^k \rangle}
		=\Ek{\langle (1- \xi^k/p) (\d_{\tv}^k - \d_{v}^k), \d_{v}^k \rangle} 
		= \Ek{1-\xi^k/p} \langle \d_{\tv}^k - \d_{v}^k, \d_{v}^k\rangle=0. \label{z-dxd=0}
	\end{align}
	By following the update of $v_i^{k+1}$ in \eqref{vupdate} , we derive 
	\begin{align*}
		&\Ek{\sum_{i=1}^{n} \| v_{i}^{k+1} - v_{i}^{k} \|^2} = \Ek{\sum_{i=1}^{n} \| \mathcal{P}_{r_v} [\sum_{j=1}^{n} w_{ij} (v_{j}^{k} + \eta z_{v,j}^{k})] - v_{i}^{k} \|^2]} \leq \Ek{\sum_{i=1}^{n} \| \sum_{j=1}^{n} w_{ij} (v_{j}^{k} + \eta z_{v,j}^{k}) - v_{i}^{k} \|^2} \\&\leq 4 \Ek{\sum_{i=1}^{n} \| \sum_{j=1}^{n} w_{ij} v_{j}^{k} - \bar{v}^{k} \|^2} + 4 \Ek{\sum_{i=1}^{n} \| \bar{v}^{k} - v_{i}^{k} \|^2} + 4 \eta^2\Ek{ \sum_{i=1}^{n} \| \sum_{j=1}^{n} w_{ij} z_{v,j}^{k} - \bar{z}_{v}^{k} \|^2} + 4 \eta^2 \Ek{\sum_{i=1}^{n} \| \bar{z}_{v}^{k} \|^2} \\
		&\leq 8 \Ek{\sum_{i=1}^{n} \| v_{i}^{k} - \bar{v}^{k} \|^2} + 4 \eta^2\Ek{ \sum_{i=1}^{n} \| z_{v,i}^{k} - \bar{z}_{v}^{k} \|^2} + 4 n\eta^2 \Ek{\| \bar{d}_{v}^{k} \|^2 + \|\bar{z}_{v}^k - \bar{d}_{v}^k\|^2} \\
		&\leq 8{\sum_{i=1}^{n} \| v_{i}^{k} - \bar{v}^{k} \|^2} + 4 \eta^2\Ek{ \sum_{i=1}^{n} \| z_{v,i}^{k} - \bar{z}_{v}^{k} \|^2} + 4n\eta^2{\| \bar{d}_{v}^{k} \|^2} + 4\eta^2\frac{1-p}{p}{\sum_{i=1}^n\|{d}_{v,i}^k - {d}_{\tv,i}^k\|^2},
	\end{align*}
	where the first inequality holds because the projection operator is non-expansive, the second   follows from $\left(\sum_{l=1}^{s} a_l\right)^2\leq s\sum_{l=1}^{s} a_l^2$,  the third    is due to Lemma \ref{con_sqr}  and \eqref{z-dxd=0}, and 
	the last one follows from
	\begin{align}
		n\Ek{\|\d_{v}^k - \z_{v}^k\|^2}  =\, & n\Ek{\|\mean (1-\xi^k/p)(d_{\tv,i}^k - d_{v,i}^k)\|^2}=  \frac{1}{n}  \Ek{\|\sum_{i=1}^n (1-\xi^k/p)(d_{\tv,i}^k - d_{v,i}^k)\|^2} \nonumber\\
		\leq\, &  
		\Ek{\sum_{i=1}^n \|(1- \xi^k/p)(d_{\tv,i}^k - d_{v,i}^k)\|^2} = \Ek{(1-\xi^k/p)^2}\sum_{i=1}^n \|d_{\tv,i}^k - d_{v,i}^k\|^2 \nonumber \\
		=\, & \big(p(1-1/p)^2 + (1-p)\big){\sum_{i=1}^n \|d_{\tv,i}^k - d_{v,i}^k\|^2}
		=  \frac{1-p}{p}{\sum_{i=1}^n \|d_{\tv,i}^k - d_{v,i}^k\|^2}. \label{bard-barz}
	\end{align}
	This completes the proof. 
\end{proof}

\begin{remark}\label{rem:xk+1-xk}
Similarly, we can deduce the following inequalities. The details are omitted.
\begin{align*}
	\Ek{\sum_{i=1}^n \|x_i^{k+1} - x_i^k\|^2} \leq& 8 {\sum_{i=1}^n \|x_i^k -\x^k\|^2} + 4\alpha^2\Ek{ \sum_{i=1}^n \|z_{x,i}^k -\z_{x}^k\|^2} + 4n\alpha^2{\|\d_{x}^k\|^2}
	+4\alpha^2\frac{1-p}{p}{\sum_{i=1}^n\|{d}_{x,i}^k - {d}_{\tx,i}^k\|^2},\\
	\Ek{\sum_{i=1}^n \|y_i^{k+1} - y_i^k\|^2} \leq& 8{\sum_{i=1}^n \|y_i^k -\y^k\|^2} + 4\beta^2 \Ek{\sum_{i=1}^n \|z_{y,i}^k -\z_{y}^k\|^2} + 4n\beta^2{\|\d_{y}^k\|^2}
	+4\beta^2\frac{1-p}{p}{\sum_{i=1}^n\|{d}_{y,i}^k - {d}_{\ty,i}^k\|^2}.
\end{align*}
\end{remark}
Next, we bound $\Ek{\sum_{i=1}^n (\norm{d_{\star,i}^k - d_{\tilde{\star},i}^k} + \norm{d_{\star,i}^{k+1} - d_{\tilde{\star},i}^{k+1}})}$ for $\star = x, y, v$ respectively. 
By combining \eqref{dx}-\eqref{yk+1-tyk+1},
we derive 
\begin{align}
&\Ek{\sum_{i=1}^n \big(\norm{d_{x,i}^k - d_{\tx,i}^k} + \norm{d_{x,i}^{k+1} - d_{\tx,i}^{k+1}}\big)} \nonumber  \\
\leq &C_1 \Ek{\sum_{i=1}^n \big(\norm{x_i^{k+1} - \tx_{i}^{k+1}} + \norm{y_i^{k+1} - \ty_{i}^{k+1}} + \norm{v_i^{k+1} - \tv_{i}^{k+1}} + \norm{x_i^{k} - \tx_{i}^{k}} + \norm{y_i^{k} - \ty_{i}^{k}} + \norm{v_i^{k} - \tv_{i}^{k}}\big)} \nonumber \\
\leq & C_1\sum_{i=1}^n\Big(
\frac{1-p+\rho}{\rho}\big(\norm{x_i^k -\tx_{i}^k} + \norm{y_i^k -\ty_{i}^k} +\norm{v_i^k -\tv_{i}^k}\big)
\nonumber\\
& \quad + \frac{1}{1-\rho}\Ek{\sum_{i=1}^n \|x_i^{k+1} - x_i^k\|^2+\sum_{i=1}^n \|y_i^{k+1} - y_i^k\|^2+\sum_{i=1}^n \|v_i^{k+1} - v_i^k\|^2}\Big). 
\label{dxk+dxk+1}
\end{align}
Similarly, we can deduce 
\begin{align}
&\Ek{\sum_{i=1}^n \big(\norm{d_{y,i}^k - d_{\ty,i}^k} + \norm{d_{y,i}^{k+1} - d_{\ty,i}^{k+1}}\big)} \nonumber \\
\leq \, & C_2{\sum_{i=1}^n\Big(
	\frac{1-p+\rho}{\rho}\big(\norm{x_i^k -\tx_{i}^k} + \norm{y_i^k -\ty_{i}^k}\big)
	+ \frac{1}{1-\rho}\Ek{\sum_{i=1}^n \|x_i^{k+1} - x_i^k\|^2+\sum_{i=1}^n \|y_i^{k+1} - y_i^k\|^2}
	\Big)
}, \label{dyk+dyk+1}\\
\text{and} \nonumber \\
&\Ek{\sum_{i=1}^n \big(\norm{d_{v,i}^k - d_{\tv,i}^k} + \norm{d_{v,i}^{k+1} - d_{\tv,i}^{k+1}}\big)} \nonumber  \\
\leq \, & C_1\sum_{i=1}^n\Big(
\frac{1-p+\rho}{\rho}\big(\norm{x_i^k -\tx_{i}^k} + \norm{y_i^k -\ty_{i}^k} + \norm{v_i^k -\tv_{i}^k}\big)
\nonumber\\
& \quad + \frac{1}{1-\rho}\Ek{\sum_{i=1}^n \|x_i^{k+1} - x_i^k\|^2+\sum_{i=1}^n \|y_i^{k+1} - y_i^k\|^2+\sum_{i=1}^n \|v_i^{k+1} - v_i^k\|^2}\Big). 
\label{dvk+dvk+1}
\end{align}
Next, we recall a Lemma from \cite{dong2023singleloop}, which is useful for bounding  $\norm{\bar{d}_{y}^k}$ and $ \norm{\bar{d}_{v}^k}$.
\begin{lemma}{\cite[Lemma~A.7]{dong2023singleloop}}
The sequences $\{d_{y,i}^k\}$ and $\{d_{v,i}^k\}$ generated by \eqref{ssldbo_concise} satisfy
\begin{align}
	\| \bar{d}_{y}^k \|^2 \leq& \frac{2L_{f,1}^2}{n} \sum_{i=1}^{n} \left( \| x_{i}^{k} - \bar{x}^{k} \|^2 + \| y_{i}^{k} - \bar{y}^{k} \|^2 \right) + 2L_{f,1}^2 \| \bar{y}^{k} - y^{*}(\bar{x}^{k}) \|^2, \label{bardy}\\
	\| \bar{d}_{v}^k \|^2 \leq& \frac{5 (L_{f,2} r_{v} + L_{F,1})^2}{n} \sum_{i=1}^{n} \left( \| x_{i}^{k} - \bar{x}^{k} \|^2 + \| y_{i}^{k} - \bar{y}^{k} \|^2 \right) + \frac{5 L_{f,1}^2}{n} \sum_{i=1}^{n} \| v_{i}^{k} - \bar{v}^{k} \|^2 \nonumber 
	\\
	&+ 5 (L_{F,1} + r_{v} L_{f,2})^2 \| \bar{y}^{k} - y^{*}(\bar{x}^{k}) \|^2 + 5 L_{f,1}^2 \| \bar{v}^{k} - v^{*}(\bar{x}^{k}) \|^2. \label{bardv}
\end{align}
\end{lemma}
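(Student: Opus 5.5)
Both bounds follow by inserting $y^*(\bar x^k)$ and $v^*(\bar x^k)$ as intermediate points into the averaged directions and then using the Lipschitz constants of Assumption \ref{Assump1}.

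\textbf{Bound \eqref{bardy}.} By strong convexity, $y^*(\bar x^k)$ satisfies $\frac1n\sum_{i=1}^n\nabla_2 f_i(\bar x^k,y^*(\bar x^k))=0$. I subtract this from
$\bar d_y^k=\frac1n\sum_i\nabla_2 f_i(x_i^k,y_i^k)$ and split
\[
\bar d_y^k=\tfrac1n\sum\nolimits_i\big[\nabla_2 f_i(x_i^k,y_i^k)-\nabla_2 f_i(\bar x^k,\bar y^k)\big]+\tfrac1n\sum\nolimits_i\big[\nabla_2 f_i(\bar x^k,\bar y^k)-\nabla_2 f_i(\bar x^k,y^*(\bar x^k))\big].
\]
Applying $\|a+b\|^2\le 2\|a\|^2+2\|b\|^2$, Jensen's inequality on each average, and $L_{f,1}$-Lipschitzness of $\nabla f_i$ gives \eqref{bardy}. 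The first bracket yields $\frac{2L_{f,1}^2}{n}\sum_i(\|x_i^k-\bar x^k\|^2+\|y_i^k-\bar y^k\|^2)$, and the second yields $2L_{f,1}^2\|\bar y^k-y^*(\bar x^k)\|^2$.

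\textbf{Bound \eqref{bardv}.} Here I use $\frac1n\sum_i\big[\nabla_2F_i(\bar x^k,y^*(\bar x^k))-\nabla^2_{22}f_i(\bar x^k,y^*(\bar x^k))v^*(\bar x^k)\big]=0$, which is just the definition of $v^*$. I subtract this from $\bar d_v^k$ and telescope in five steps:
\begin{itemize}
\item[(i)] $\nabla_2F_i(x_i^k,y_i^k)-\nabla_2F_i(\bar x^k,\bar y^k)$;
\item[(ii)] $\nabla_2F_i(\bar x^k,\bar y^k)-\nabla_2F_i(\bar x^k,y^*(\bar x^k))$;
\item[(iii)] $-[\nabla^2_{22}f_i(x_i^k,y_i^k)-\nabla^2_{22}f_i(\bar x^k,\bar y^k)]v_i^k$ together with $-[\nabla^2_{22}f_i(\bar x^k,\bar y^k)-\nabla^2_{22}f_i(\bar x^k,y^*(\bar x^k))]v_i^k$;
\item[(iv)] $-\nabla^2_{22}f_i(\bar x^k,y^*(\bar x^k))(v_i^k-\bar v^k)$;
\item[(v)] $-\nabla^2_{22}f_i(\bar x^k,y^*(\bar x^k))(\bar v^k-v^*(\bar x^k))$.
\end{itemize}
Terms (i) and (ii) are controlled by $L_{F,1}$. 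Term (iii) is controlled by $L_{f,2}$ and $\|v_i^k\|\le r_v$, which holds because of the projection. Terms (iv) and (v) use $\|\nabla^2_{22}f_i\|_{\rm op}\le L_{f,1}$.

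I then group the Hessian-variation terms of (iii) with the matching gradient terms (i) and (ii). This gives a factor $(L_{F,1}+r_vL_{f,2})$ multiplying the deviation $(x_i^k-\bar x^k,\,y_i^k-\bar y^k)$ and the deviation $\bar y^k-y^*(\bar x^k)$. Five groups remain, so I apply $\|\sum_{l=1}^5a_l\|^2\le5\sum_{l=1}^5\|a_l\|^2$ followed by Jensen over $i$. Term (iv) gives $\frac{5L_{f,1}^2}{n}\sum_i\|v_i^k-\bar v^k\|^2$, and term (v) gives $5L_{f,1}^2\|\bar v^k-v^*(\bar x^k)\|^2$.

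\textbf{Main obstacle.} The only delicate point is the grouping. The gradient and Hessian differences over the same argument pair must be combined before squaring, so that the constant stays $5$ rather than growing. The combined Lipschitz bound $\|(\nabla_2F_i-\nabla^2_{22}f_i v)(a)-(\nabla_2F_i-\nabla^2_{22}f_i v)(b)\|\le(L_{F,1}+r_vL_{f,2})\|a-b\|$ handles this.

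Since the result is quoted as \cite[Lemma~A.7]{dong2023singleloop}, it suffices to note that the snapshot variables play no role here. Both bounds involve only current iterates, so the SLDBO argument applies verbatim.
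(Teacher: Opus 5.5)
Your proof is correct and follows the same approach as the cited SLDBO lemma, which the paper also uses in its $\Delta_1+\Delta_2$ bound on $\|\nabla\Phi(\bar{x}^k)-\bar{d}_x^k\|^2$: subtract the optimality identities at $(\bar{x}^k,y^*(\bar{x}^k),v^*(\bar{x}^k))$, apply the Lipschitz bounds using $\|v_i^k\|\le r_v$ (by initialization at $k=0$ and by the projection afterwards), and square. One small bookkeeping point: your grouping actually gives four vector groups and hence the smaller constant $4$, whereas the constant $5$ comes from splitting the $x$- and $y$-deviations into separate scalar terms as the paper does; either way the stated bound holds.
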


Now, by combining \eqref{zx-barzx}, the inequalities in Remark \ref{rmkA1}, Lemma \ref{lem:vk+1-vk}, Remark \ref{rem:xk+1-xk} and \eqref{dxk+dxk+1}-\eqref{bardv}, we can establish the following results for 
$\Ek{\mean \norm{z_{\star,i}^{k+1} - \z_{\star}^{k+1}}}$ for $\star = x, y, v$ 
respectively. Again, details are omitted for simplicity. 
\small{
\begin{align}
	&\Ek{ \mean \norm{z_{x,i}^{k+1} - \z_{x}^{k+1}}} \nonumber\\ 
	\leq & 
	\Big(\rho + \frac{4\alpha^2 C}{(1-\rho)^3 \rho p}\Big)\Ek{\mean \norm{z_{x,i}^{k} - \z_{x}^{k}}} 
	+ \frac{4\beta^2 C}{(1-\rho)^3\rho p}\Ek{\mean \norm{z_{y,i}^{k} - \z_{y}^{k}}} + \frac{4\eta^2 C}{(1-\rho)^3\rho p}\Ek{\mean \norm{z_{v,i}^{k} - \z_{v}^{k}}} \nonumber\\
	& + \Big(\frac{(1-p+\rho)C}{(1-\rho)^2 \rho^2 p} + \frac{4(\alpha^2+\beta^2+\eta^2)C^2}{(1-\rho^3)p^2}\Big){\mean \big(\norm{x_i^k - \tx_i^k} + \norm{y_i^k - \ty_i^k} + \norm{v_i^k - \tv_i^k}\big)} + \frac{4\alpha^2 C}{(1-\rho)^3 \rho p}{\norm{\d_{x}^k}} \nonumber\\
	& + \frac{(8+8L_{f,1}^2\beta^2 + 20\eta^2 L_1^2)C}{(1-\rho)^3 \rho p} \Big({\mean \norm{x_i^k - \x^k}} + {\mean \norm{y_i^k - \y^k}}\Big) + \frac{8C}{(1-\rho)^3 \rho p}{\mean \norm{v_i^k - \v^k}} \nonumber\\
	& + \frac{(8L_{f,1}^2\beta^2  + 20\eta^2 L_1^2 )C}{(1-\rho)^3 \rho p} {\norm{\y^k - y^*(\x^k)}} + \frac{20\eta^2 L_1^2 C}{(1-\rho)^3 \rho p} {\norm{\v^k - v^*(\x^k)}}, \label{bound_for_zx-barz}
\end{align}
\begin{align}
	&\Ek{ \mean \norm{z_{y,i}^{k+1} - \z_{y}^{k+1}}} \nonumber\\ 
	\leq & 
	\Big(\rho + \frac{4\beta^2 C}{(1-\rho)^3 \rho p}\Big)\Ek{\mean \norm{z_{y,i}^{k} - \z_{y}^{k}}} 
	+ \frac{4\alpha^2 C}{(1-\rho)^3\rho p}\Ek{\mean \norm{z_{x,i}^{k} - \z_{x}^{k}}} \nonumber\\
	& + \Big(\frac{(1-p+\rho)C}{(1-\rho)^2 \rho^2 p} + \frac{4(\alpha^2+\beta^2+\eta^2)C^2}{(1-\rho^3)p^2}\Big){\mean \big(\norm{x_i^k - \tx_i^k} + \norm{y_i^k - \ty_i^k}\big)} + \frac{8L_{f,1}^2\beta^2 C}{(1-\rho)^3 \rho p} {\norm{\y^k - y^*(\x^k)}} \nonumber\\
	& + \frac{(8+8L_{f,1}^2\beta^2)C}{(1-\rho)^3 \rho p} \Big({\mean \norm{x_i^k - \x^k}} + {\mean \norm{y_i^k - \y^k}}\Big) + 
	\frac{4\alpha^2 C}{(1-\rho)^3 \rho p}{\norm{\d_{x}^k}}, \label{bound_for_zy-barz}\\
	&\Ek{ \mean \norm{z_{v,i}^{k+1} - \z_{v}^{k+1}}} \nonumber\\ 
	\leq & 
	\Big(\rho + \frac{4\eta^2 C}{(1-\rho)^3 \rho p}\Big)\Ek{\mean \norm{z_{v,i}^{k} - \z_{v}^{k}}} 
	+ \frac{4\alpha^2 C}{(1-\rho)^3\rho p}\Ek{\mean \norm{z_{x,i}^{k} - \z_{x}^{k}}} + \frac{4\beta^2 C}{(1-\rho)^3\rho p}\Ek{\mean \norm{z_{y,i}^{k} - \z_{y}^{k}}} \nonumber\\
	& + \Big(\frac{(1-p+\rho)C}{(1-\rho)^2 \rho^2 p}+ \frac{4(\alpha^2+\beta^2+\eta^2)C^2}{(1-\rho^3)p^2}\Big) {\mean \big(\norm{x_i^k - \tx_i^k} + \norm{y_i^k - \ty_i^k} + \norm{v_i^k - \tv_i^k}\big)} + \frac{4\alpha^2 C}{(1-\rho)^3 \rho p}{\norm{\d_{x}^k}} \nonumber\\
	& + \frac{(8+8L_{f,1}^2\beta^2 + 20\eta^2 L_1^2)C}{(1-\rho)^3 \rho p} \Big({\mean \norm{x_i^k - \x^k}} + {\mean \norm{y_i^k - \y^k}}\Big) + \frac{8C}{(1-\rho)^3 \rho p}{\mean \norm{v_i^k - \v^k}} \nonumber\\
	& + \frac{(8L_{f,1}^2\beta^2  + 20\eta^2 L_1^2 )C}{(1-\rho)^3 \rho p} {\norm{\y^k - y^*(\x^k)}} + \frac{20\eta^2 L_1^2 C}{(1-\rho)^3 \rho p} {\norm{\v^k - v^*(\x^k)}},\label{bound_for_zv-barz}
\end{align}
}
where $C = \max \{C_1,C_2\}$ and $L_1$ is defined in \eqref{def-constants-main-text}.

\subsubsection{Some lemmas for convergence rate of Algorithm \ref{alg:sslDBo}}
Recall that $\Phi(x) = F(x,y^*(x))$ denotes the overall objective function. 
For any $k>0$, $\Phi(\x^k)$ and $\nabla \Phi(\x^k)$ are also measurable with respect to $\mathcal{F}_k$. 
\begin{lemma}\label{dlemma1a}
The sequence $\{({x}_i^k, {y}_i^k,{v}_i^k)\}$ generated by \eqref{ssldbo_concise} satisfies
\begin{align*}
	\Ek{\Phi(\bar{x}^{k+1})-\Phi(\bar{x}^k)}
	\leq & -\frac{\alpha}{2}{\|\nabla \Phi (\bar{x}^k)\|^2}
	-\frac{(1/\alpha
		-L_{\Phi})}{2}\Ek{\|\bar{x}^{k+1}-\bar{x}^k\|^2} +\frac{5\alpha L_{f,1}^2}{2}{\|\v^{k}-v^* (\x^k)\|^2} \nonumber\\
	&+\frac{5\alpha \left(L_{F,1} + L_{f,2}r_v\right)^2}{2}	{\|\y^{k}-y^* (\x^k)\|^2}
	+\frac{5\alpha L^2_{f,1}}{2n}{ \sum_{i=1}^n \|v_i^k-\v^k\|^2} \nonumber\\
	&+\frac{5\alpha\left(L_{F,1}+ r_vL_{f,2}\right)^2}{2n} {\sum_{i=1}^n \left(\|x_i^k-\x^k\|^2+ \|y_i^k-\y^k\|^2\right)}+\frac{\alpha(1-p)}{2np}{\sum_{i=1}^n \|d_{x,i}^k - d_{\tx,i}^k\|^2}, 
\end{align*}
where $L_{\Phi}$ is defined in \eqref{def-constants-main-text}.
\end{lemma}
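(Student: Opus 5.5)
We will run the standard descent-lemma argument for the $L_\Phi$-smooth function $\Phi$, using the averaged update $\x^{k+1}=\x^k-\alpha\z_x^k$. The identity $\x^{k+1}=\x^k-\alpha\z_x^k$ holds because $W$ is doubly stochastic, so averaging \eqref{xupdate} over $i$ kills the mixing. Smoothness of $\Phi$ with constant $L_\Phi$ (a known consequence of Assumption \ref{Assump1}, as in \cite{dong2023singleloop}) gives
\[
\Phi(\x^{k+1})\le \Phi(\x^k)-\alpha\langle\nabla\Phi(\x^k),\z_x^k\rangle+\tfrac{L_\Phi}{2}\|\x^{k+1}-\x^k\|^2 .
\]
The plan is to rewrite the inner product via $-\alpha\langle a,b\rangle=\frac{\alpha}{2}\big(\|a-b\|^2-\|a\|^2-\|b\|^2\big)$ with $a=\nabla\Phi(\x^k)$ and $b=\z_x^k$. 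Since $\alpha^2\|\z_x^k\|^2=\|\x^{k+1}-\x^k\|^2$, the $-\frac{\alpha}{2}\|\z_x^k\|^2$ term combines with the smoothness term to produce exactly $-\frac{1/\alpha-L_\Phi}{2}\|\x^{k+1}-\x^k\|^2$. What remains is to take $\Ek{\cdot}$ and bound $\frac{\alpha}{2}\Ek{\|\nabla\Phi(\x^k)-\z_x^k\|^2}$.

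For that error term, we split $\z_x^k-\nabla\Phi(\x^k)=(\z_x^k-\d_x^k)+(\d_x^k-\nabla\Phi(\x^k))$. By \eqref{bar_z}, $\Ek{\z_x^k}=\d_x^k$, and $\d_x^k$, $\nabla\Phi(\x^k)$ are $\mathcal F_k$-measurable, so the cross term has zero conditional expectation (exactly as in \eqref{z-dxd=0}). This gives
\[
\Ek{\|\z_x^k-\nabla\Phi(\x^k)\|^2}=\Ek{\|\z_x^k-\d_x^k\|^2}+\|\d_x^k-\nabla\Phi(\x^k)\|^2 .
\]
The first piece is handled exactly as in \eqref{bard-barz}, with $x$ in place of $v$: $\z_x^k-\d_x^k=(1-\xi^k/p)(\d_{\tx}^k-\d_x^k)$ and Jensen give the bound $\frac{1-p}{np}\sum_i\|d_{x,i}^k-d_{\tx,i}^k\|^2$. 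Multiplied by $\alpha/2$, this yields the last term of the claim.

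The second piece is the deterministic SLDBO hypergradient bias. Using $\nabla\Phi(\x)=\nabla_1F(\x,y^*(\x))-\nabla^2_{12}f(\x,y^*(\x))v^*(\x)$, we write $\d_x^k-\nabla\Phi(\x^k)$ as the average over $i$ of $[\nabla_1F_i(x_i^k,y_i^k)-\nabla_1F_i(\x^k,y^*(\x^k))]-[\nabla^2_{12}f_i(x_i^k,y_i^k)-\nabla^2_{12}f_i(\x^k,y^*(\x^k))]v_i^k-\nabla^2_{12}f_i(\x^k,y^*(\x^k))(v_i^k-v^*(\x^k))$. We then split $v_i^k-v^*(\x^k)=(v_i^k-\v^k)+(\v^k-v^*(\x^k))$ and insert $\x^k,\y^k$. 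The deviations $(x_i^k-\x^k,y_i^k-\y^k)$ and $(\y^k-y^*(\x^k))$ are collected into the $\nabla_1F_i$ and $\nabla_{12}^2f_i$ differences, which are Lipschitz with constant $L_{F,1}+r_vL_{f,2}$ because $\|v_i^k\|\le r_v$ by the projection. The $v$-errors use $\|\nabla^2_{12}f_i\|\le L_{f,1}$. This leaves five groups, and $\|\sum_{l=1}^5a_l\|^2\le5\sum_l\|a_l\|^2$ together with Jensen over $i$ gives the factors $5L_{f,1}^2$ and $5(L_{F,1}+r_vL_{f,2})^2$ appearing in the statement. This is the same computation as Lemma A-level bounds in \cite{dong2023singleloop}, matching the structure of \eqref{bardv}.

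The main obstacle is bookkeeping in this last step: arranging the five-term split so that the consensus terms for $x$ and $y$ share one constant. I would group the consensus deviations as $\|(x_i^k,y_i^k)-(\x^k,\y^k)\|^2$ inside a single Lipschitz estimate, and the $y^*$ gap as another, so that exactly five groups appear and the constants line up. Everything else is a direct transcription of the smoothness and unbiasedness facts already established.
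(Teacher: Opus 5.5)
Your proposal is correct and follows essentially the same route as the paper: the descent lemma for $\Phi$ along $\x^{k+1}=\x^k-\alpha\z_x^k$, the polarization identity, unbiasedness $\Ek{\z_x^k}=\d_x^k$ to split off the snapshot variance bounded as in \eqref{bard-barz}, and a five-term Lipschitz bound on the hypergradient bias. The only cosmetic difference is that the paper bounds the bias through the intermediate point $(\x^k,\y^k,\v^k)$ via $\Delta_1+\Delta_2$ instead of node by node. Your bookkeeping worry is also moot: any split into at most five pieces suffices, and grouping $(x_i^k-\x^k,y_i^k-\y^k)$ jointly gives only four pieces, hence a constant of $4$, which is smaller than the stated $5$.
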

\begin{proof}
From the updates of $x_i^{k+1}$ and $z_{x,i}^k$ in \eqref{xupdate}, we have 
\begin{equation}\label{xiter}
	\x^{k+1}=\x^{k}-\alpha\bar{z}_{x}^k.
\end{equation}
It follows from Lemma 2.2 in \cite{ghadimi2018approximation} that $\nabla \Phi (x)$ is $L_{\Phi}$-Lipschitz continuous.
On the other hand, from Lemma 5.7 in \cite{beck2017first} we derive
\begin{align}\label{phi0}
	\Ek{\Phi(\x^{k+1})-\Phi(\x^k)}	
	\leq \, & \Ek{\big\langle \nabla \Phi(\x^k), \x^{k+1}-\x^k \big\rangle+\frac{L_{\Phi}}{2}\|\x^{k+1}-\x^k\|^2}\nonumber\\
	\stackrel{(\ref{xiter})}
	= \, & -\alpha \big\langle \nabla \Phi (\x^k), \Ek{\z^k_{x}} \big\rangle 
	+\frac{L_{\Phi}}{2}
	\Ek{\|\x^{k+1}-\x^k\|^2}\nonumber\\
	= \, & -\frac{\alpha}{2}{\|\nabla \Phi(\x^k)\|^2}
	-\frac{\alpha}{2}
	\Ek{\|\z^k_{x}\|^2} +\frac{\alpha}{2}\Ek{\|\nabla \Phi (\x^k)-\z^k_{x}\|^2}
	+\frac{L_{\Phi}}{2}\Ek{\|\x^{k+1}-\x^k\|^2}
	\nonumber \\
	\stackrel{(\ref{xiter})}=\, & -\frac{\alpha}{2}{\|\nabla \Phi(\x^k)\|^2}
	-\frac{1}{2\alpha}\Ek{\|\x^{k+1}-\x^k\|^2} +\frac{\alpha}{2}{\|\nabla \Phi (\x^k)-\d^k_{x}\|^2}
	\nonumber
	\\&+ \frac{\alpha}{2}\Ek{\|\bar{d}_{x}^k - \bar{z}_{x}^k\|^2}+\frac{L_{\Phi}}{2}\Ek{\|\x^{k+1}-\x^k\|^2},
\end{align}
where the last equality is due to $\Ek{\langle\nabla \Phi (\x^k)-\d^k_{x}, \bar{d}_{x}^k - \bar{z}_{x}^k\rangle}=\Ek{1-\xi^k/p}\langle\nabla \Phi (\x^k)-\d^k_{x}, \d_{x}^k - \d_{\tx}^k\rangle=0$.
By definition, we have
$\nabla \Phi (\x^k)=\nabla_1 F(\x^k,y^*(\x^k))-\nabla^2_{12}f(\x^k,y^*(\x^k)) v^*(\x^k)$ and
$
	\d^k_x = \frac{1}{n}\sum_{i=1}^n\big(\nabla_1 F_i(x^k_i, y^k_i) - \nabla^2_{12} f_i(x^k_i, y^k_i) v^k_i\big).
$
For convenience, we define
\begin{equation*}
	\left\{
	\begin{array}{l}
		\Delta_1 := \left\|\nabla_1 F(\x^k,y^* (\x^k))-\nabla^2_{12}f (\x^k,y^* (\x^k)) v^* (\x^k)-\nabla_1 F(\x^k,\y^k)+\nabla^2_{12}f (\x^k,\y^k) \v^k\right\|, \smallskip \\
		\Delta_2 := \left\|\nabla_1 F(\x^k,\y^k)-\nabla^2_{12}f(\x^k,\y^k) \v^k- \frac{1}{n}\sum_{i=1}^n\Big(\nabla_1 F_i(x^k_i, y^k_i) - \nabla^2_{12} f_i(x^k_i, y^k_i) v^k_i\Big)\right\|.
	\end{array}
	\right.
\end{equation*}
From the triangle inequality and $\|v_i\|\leq r_v$ for all $i$, we derive
\begin{align}\label{phi2}
	\Delta_1	\leq \, & \Big\|\nabla_1 F(\x^k, y^* (\x^k))-\nabla_1 F(\x^k,\y^{k})\Big\|
	+\left\|\Big(\nabla^2_{12} f (\x^k,\y^k) - \nabla^2_{12} f (\x^k, y^* (\x^k))\Big)  \v^k\right\| \nonumber\\
	\, & +\left\|\nabla^2_{12} f(\x^k, y^* (\x^k))\big(\v^{k}-v^* (\x^k)\big)\right\|\nonumber\\
	\leq \, &
	\left(L_{F,1} + L_{f,2} r_v\right) \|\y^k-y^* (\x^k)\|
	+L_{f,1} \|\v^{k}-v^* (\x^k)\|.
\end{align}
By using the triangle inequality again and considering $F=\frac{1}{n}\sum_{i=1}^n F_i$ and $f=\frac{1}{n}\sum_{i=1}^n f_i$, we derive
\begin{align}\label{phi3}
	\Delta_2	\leq \, & \frac{1}{n}\sum_{i=1}^n \left\|\nabla_1 F_i(\x^k,\y^k)-\nabla^2_{12}f_i(\x^k,\y^k) \v^k- \nabla_1 F_i(x^k_i, y^k_i) + \nabla^2_{12} f_i(x^k_i, y^k_i) v^k_i)\right\|\nonumber\\
	\leq \, & \frac{1}{n} \sum_{i=1}^n\left\| \nabla_1 F_i(\x^k,\y^k)- \nabla_1 F_i(x^k_i, y^k_i)\right\| + \frac{1}{n} \sum_{i=1}^n\left\|\Big(\nabla^2_{12} f_i(\x^k,\y^k)-\nabla^2_{12} f_i(x_i^k, y_i^k)\Big)  v^k_i\right\| \nonumber\\
	\, & + \frac{1}{n}\sum_{i=1}^n\Big\|\nabla^2_{12} f_i(\x^k,\y^{k})(\v^k-v_i^k)\Big\|\nonumber\\
	\leq \, & \frac{L_{F,1}+ r_v L_{f,2}}{n}\sum_{i=1}^n \big(\|x_i^k-\x^k\|+ \|y_i^k-\y^k\|\big)
	+ \frac{L_{f,1}}{n}\sum_{i=1}^n \|v_i^k-\v^k\|.
\end{align}
It is easy to observe from the triangle inequality that ${\|\nabla \Phi(\x^k)-\d^k_x\|} \leq {\Delta_1 + \Delta_2}$.
Then, by using \eqref{phi2}, \eqref{phi3}  and the inequality $\left(\sum_{l=1}^{s} a_l\right)^2\leq s\sum_{l=1}^{s} a_l^2$, it is easy to derive
\begin{align*}
	{\left\|\nabla \Phi(\x^k)-\d^k_x\right\|^2}
	\leq  &\,  5\left(L_{F,1} + L_{f,2} r_v\right)^2 {\|\y^k-y^* (\x^k)\|^2} + 5L_{f,1}^2{ \|\v^{k}-v^* (\x^k)\|^2} \\
	&+\frac{5\left(L_{F,1}+ r_v L_{f,2}\right)^2}{n}{\sum_{i=1}^n\left( \|x_i^k-\x^k\|^2+ \|y_i^k-\y^k\|^2\right)}
	+ \frac{5L_{f,1}^2}{n}{\sum_{i=1}^n \|v_i^k-\v^k\|^2},
\end{align*}
which, together with \eqref{phi0} and $\Ek{\|\d^k_{x}-\z^k_{x}\|^2} \leq \frac{1-p}{np}{\sum_{i=1}^n \|d_{x,i}^k - d_{\tx,i}^k\|^2}$ (similar to \eqref{bard-barz}), yields the desired result.
\end{proof}

The next two lemmas bound $\Ek{\Vert\y^{k+1}-y^*(\x^k)\Vert^2}$ and $\Ek{\Vert \v^{k+1} - v^* (\x^k)\Vert^2}$, respectively.
\begin{lemma}\label{lemmay}
The sequence $\{(x^k_i,y^k_i,v^k_i)\}$ generated by \eqref{ssldbo_concise} satisfies
\begin{align}\label{y*}
	\Ek{\Vert\y^{k+1}-y^*(\x^k)\Vert^2 }
	\leq&\Big(1-\frac{\beta \sigma}{2}\Big){\|\y^k-y^*(\x^k)\|^2}
	+\frac{3\beta L_{f,1}^2}{n  \sigma} {\sum_{i=1}^n\left(\|\x^k-x^k_i\|^2+\|\y^k-y^k_i\|^2\right)} \nonumber\\
	&+\frac{(1-p)\beta^2 C_2}{np}{\sum_{i=1}^{n}\left(\|x_i^k-\tx_i^{k}\|^2+\|y_i^k-\ty_i^{k}\|^2\right)}.
\end{align}
\end{lemma}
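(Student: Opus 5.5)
We use the averaged $y$-recursion. Averaging \eqref{yupdate} and using double stochasticity of $W$ gives $\y^{k+1}=\y^k-\beta\z_y^k$. By \eqref{bar_z} we have $\Ek{\z_y^k}=\d_y^k$. Write $\z_y^k=\d_y^k+(\z_y^k-\d_y^k)$; the error term is conditionally mean zero and independent of $\y^k,\x^k$ given $\mathcal{F}_k$. Hence
\begin{align*}
\Ek{\|\y^{k+1}-y^*(\x^k)\|^2}=\|\y^k-\beta\d_y^k-y^*(\x^k)\|^2+\beta^2\Ek{\|\z_y^k-\d_y^k\|^2}.
\end{align*}
Exactly as in \eqref{bard-barz}, the variance term satisfies $\Ek{\|\z_y^k-\d_y^k\|^2}\le\frac{1-p}{np}\sum_{i}\|d_{y,i}^k-d_{\ty,i}^k\|^2$. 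Then \eqref{dy} bounds this by $\frac{(1-p)C_2}{np}\sum_i(\|x_i^k-\tx_i^k\|^2+\|y_i^k-\ty_i^k\|^2)$, which produces the last term of \eqref{y*}.

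For the deterministic part, introduce $g^k:=\nabla_2 f(\x^k,\y^k)$ and split $\d_y^k=g^k+e^k$, where
$$e^k=\frac1n\sum_i\big(\nabla_2 f_i(x_i^k,y_i^k)-\nabla_2 f_i(\x^k,\y^k)\big).$$
By Lipschitz continuity and Jensen, $\|e^k\|^2\le\frac{L_{f,1}^2}{n}\sum_i(\|x_i^k-\x^k\|^2+\|y_i^k-\y^k\|^2)$, possibly with a factor 2 from splitting the $x$ and $y$ perturbations. The standard strongly convex gradient step applies because $f(\x^k,\cdot)$ is $\sigma$-strongly convex and $L_{f,1}$-smooth, and $\nabla_2 f(\x^k,y^*(\x^k))=0$. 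For $\beta\le 1/L_{f,1}$ (or $2/(\sigma+L_{f,1})$) this gives $\|\y^k-\beta g^k-y^*(\x^k)\|^2\le(1-\beta\sigma)\|\y^k-y^*(\x^k)\|^2$. I would take these stepsize restrictions from \eqref{stepsize_bound}.

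To combine the two, use Young's inequality $\|a-\beta e\|^2\le(1+c)\|a\|^2+(1+1/c)\beta^2\|e\|^2$ with $c=\beta\sigma/2$. Then $(1+\beta\sigma/2)(1-\beta\sigma)\le 1-\beta\sigma/2$, and $(1+2/(\beta\sigma))\beta^2\le 3\beta/\sigma$ when $\beta\sigma\le1$. This yields the coefficient $\frac{3\beta L_{f,1}^2}{n\sigma}$ on the consensus errors, which matches the statement if $\|e^k\|^2$ is bounded with constant $L_{f,1}^2$. That holds if the $x$- and $y$-Lipschitz bound is taken jointly: $\|\nabla_2 f_i(x,y)-\nabla_2 f_i(x',y')\|\le L_{f,1}\|(x-x',y-y')\|$, so $\|\cdot\|^2\le L_{f,1}^2(\|x-x'\|^2+\|y-y'\|^2)$.

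The main obstacle is justifying that the cross term vanishes, i.e.\ that $\z_y^k-\d_y^k=(1-\xi^k/p)(\d_{\ty}^k-\d_y^k)$ via \eqref{bar_t}, with $\xi^k$ independent of $\mathcal{F}_k$. The remaining care is getting the constants to line up: the joint Lipschitz interpretation, and $\beta\sigma\le1$ for the contraction factor $1-\beta\sigma/2$.
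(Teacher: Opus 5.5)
Your proposal is correct and follows essentially the same route as the paper. The cross term vanishes because $\z_y^k-\d_y^k=(1-\xi^k/p)(\d_{\ty}^k-\d_y^k)$ with $\xi^k$ independent of $\mathcal{F}_k$, and the variance term is controlled by \eqref{bard-barz} and \eqref{dy}. The deterministic part is handled by Young's inequality with parameter $\beta\sigma/2$, the $(1-\beta\sigma)$ contraction of the gradient step for $\beta\le 1/L_{f,1}$, and the bound \eqref{y2} on $\|\nabla_2 f(\x^k,\y^k)-\d_y^k\|^2$, where your use of joint Lipschitz continuity of $\nabla f_i$ plus Jensen's inequality is exactly what gives the paper's constant $L_{f,1}^2$.
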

\begin{proof}
First, by Cauchy-Schwarz inequality, for any $\Lambda>0$, we have
\begin{align}\label{y2p}
	\|\y^k-\beta \d_y^k \, - \, & y^*(\x^k)\|^2 =    \|\big[\y^k-\beta \nabla_2f(\x^k,\y^k)-y^*(\x^k)\big] +\beta\big[ \nabla_2f(\x^k,\y^k)- \d_y^k\big] \|^2 \nonumber\\
	\leq \, &(1+\Lambda) \|\y^k-\beta \nabla_2f(\x^k,\y^k)-y^*(\x^k) \|^2+(1+1/\Lambda)\beta ^2 \|\nabla_2f(\x^k,\y^k)-\d_y^k \|^2.
\end{align}
Note that $ \nabla_2f(\x^k,y^*(\x^k))=0$. It follows from the $\sigma$-strong convexity of $f(\bar{x}^k,\cdot)$, $L_{f,1}$-smoothness of $f$ and \cite[Theorem~2.1.12]{nesterov2018lectures}  that
\begin{align}
	\langle\y^k-y^*(\x^k),\nabla_2f(\x^k,\y^k) \rangle &=
	\langle\y^k-y^*(\x^k),\nabla_2f(\x^k,\y^k)-\nabla_2f(\x^k,y^*(\x^k)) \rangle \nonumber \\
	&\geq \frac{\sigma L_{f,1}}{\sigma+L_{f,1}}\|\y^k-y^*(\x^k)\|^2+\frac{1}{\sigma+L_{f,1}}\|\nabla_2f(\x^k,\y^k)\|^2. \label{jy-03}
\end{align}
By expanding $\Vert\y^k-\beta \nabla_2f(\x^k,\y^k)-y^*(\x^k)\Vert^2$, plugging in \eqref{jy-03}, and noting that $\sigma \leq L_{f,1}$ and $\beta \leq\frac{1}{L_{f,1}} \leq\frac{2}{\sigma+L_{f,1}}$ in Algorithm \ref{alg:sslDBo}, we obtain
\begin{align}\label{y1}
	\Vert\y^k-\beta \nabla_2f(\x^k,\y^k)-y^*(\x^k)\Vert^2
	\leq  \Big(1- \frac{2\beta \sigma L_{f,1}}{\sigma+L_{f,1}}\Big)\|\y^k-y^*(\x^k)\|^2\leq (1-\beta \sigma)\|\y^k-y^*(\x^k)\|^2.
\end{align}
It is elementary to show from $f=\frac{1}{n}\sum_{i=1}^{n}f_i$, the definition of $\d_y^k$ in \eqref{def:bar9}, the triangle inequality and Assumption \ref{Assump1} (c) that	
\begin{align}\label{y2}
	\Vert\nabla_2f(\x^k,\y^k)-\d_y^k\Vert^2
	\leq \frac{L_{f,1}^2}{n}\sum_{i=1}^n\big(\|\x^k-x^k_i\|^2+\|\y^k-y^k_i\|^2\big).
\end{align}
From \eqref{bar_z},
we know that 
$\Ek{\langle\d_{y}^k-\z_{y}^k, \y^k-\beta \d_y^k - y^*(\x^k)\rangle}=0$, and thus we have
\begin{align}
	\Ek{\norm{\y^{k+1} - y^*(\x^k)}} =\, &\Ek{\norm{\y^k-\beta \d_y^k - y^*(\x^k) + \beta(\d_y^k - \z_y^k)} }\nonumber \\
	= \, & {\norm{\y^k-\beta \d_y^k - y^*(\x^k) }} +  \Ek{\beta^2\norm{\d_y^k - \z_y^k}}. \label{y3}
\end{align}
By combining \eqref{bard-barz}, \eqref{y2p}, \eqref{y1}-\eqref{y3}, we derive
\begin{align*}
	\Ek{\norm{\y^{k+1} - y^*(\x^k)}}
	\leq \, & (1+\Lambda)(1-\beta\sigma) {\|\y^k-y^*(\x^k)\|^2}
	+\frac{1-p}{p}\frac{\beta^2}{n}{\sum_{i=1}^n \|d_{y,i}^k - d_{\ty,i}^k\|^2} \\
	& +(1+1/\Lambda)\frac{\beta^2L_{f,1}^2}{n}{\sum_{i=1}^n\big(\|\x^k-x^k_i\|^2+\|\y^k-y^k_i\|^2\big)}.
\end{align*}
The desired result \eqref{y*} can be obtained by setting $\Lambda =  \beta\sigma/2$ and using $\beta\sigma\leq 1$ and \eqref{dy}.
\end{proof}

\begin{lemma}\label{lemmav}
The sequence  $\{(x^k_i,y^k_i,v^k_i)\}$ generated by \eqref{ssldbo_concise} satisfies
\begin{align}\label{v^*}
	&\Ek{\Vert \v^{k+1} - v^* (\x^k)\Vert^2}
	\leq\left(1- {\eta  \sigma}/{2}\right) {\left\|\v^k-v^* (\x^k)\right\|^2}+\frac{3\eta \left(L_{F,1}+L_{f,2} r_v\right)^2}{ \sigma}
	{\left\| \y^k-y^* (\x^k) \right\|^2}\nonumber\\
	& \ \ +\frac{9\eta\left(L_{F,1}+L_{f,2} r_v\right)^2}{n\sigma} {\sum_{i=1}^n \left(\|x_i^k-\x^k\|^2+ \|y_i^k-\y^k\|^2\right)}+
	\frac{2\eta^2\rho^2}{n}\Ek{\sum_{i = 1}^{n}\| z_{v,i}^k-\z_v^k\|^2}\nonumber\\
	&\ \ +\frac{{9\eta L_{f,1}^2}/{\sigma} + 2\rho^2}{n}{\sum_{i=1}^n \|v_i^k-\v^k\|^2}+\frac{(1-p)\eta^2 C_1}{np}{\sum_{i=1}^{n}\left(\|x_i^k-\tx_i^{k}\|^2+\|y_i^k-\ty_i^{k}\|^2+\|v_i^k-\tv_i^{k}\|^2\right)}.
\end{align}
\end{lemma}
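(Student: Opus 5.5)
The plan mirrors Lemma \ref{lemmay}, with one extra complication: the projection in \eqref{vupdate} means $\v^{k+1}$ is \emph{not} exactly $\v^k+\eta\z_v^k$. Define the unprojected average $\hat v^{k+1}:=\frac1n\sum_{i}\sum_j w_{ij}(v_j^k+\eta z_{v,j}^k)=\v^k+\eta\z_v^k$ (double stochasticity) and the unprojected local points $\hat v_i^{k+1}:=\sum_j w_{ij}(v_j^k+\eta z_{v,j}^k)$. I will split
\[
\v^{k+1}-v^*(\x^k)=\big(\hat v^{k+1}-v^*(\x^k)\big)+\big(\v^{k+1}-\hat v^{k+1}\big).
\]
Since $\|v^*(x)\|\le r_v$ (by Assumption~\ref{Assump1}(a),(c): $\|[\nabla^2_{22}f]^{-1}\|_{\rm op}\le 1/\sigma$, $\|\nabla_2F(x,y^*(x))\|\le L_{F,0}$), $v^*(\x^k)$ lies in the projection ball. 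For the second term I use Jensen and nonexpansiveness with a fixed point of the ball, and the fact that the projection of the average lies in the ball:
\[
\|\v^{k+1}-\hat v^{k+1}\|^2\le \tfrac1n\sum_i\|\mathcal P_{r_v}[\hat v_i^{k+1}]-\hat v_i^{k+1}\|^2\le \tfrac1n\sum_i\|\hat v_i^{k+1}-\mathcal P_{r_v}[\hat v^{k+1}]\|^2 .
\]
Then $\|\mathcal P[\hat v^{k+1}]-\hat v^{k+1}\|$ is controlled by $\|\hat v^{k+1}-v^*(\x^k)\|$. It is cleaner, however, to write $\frac1n\sum_i\|\hat v_i^{k+1}-\hat v^{k+1}\|^2$, which by Lemma~\ref{con_sqr}(b) and Young is at most $\frac{2\rho^2}{n}\big(\sum_i\|v_i^k-\v^k\|^2+\eta^2\sum_i\|z_{v,i}^k-\z_v^k\|^2\big)$. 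This is exactly where the terms $2\rho^2/n\cdot\sum\|v_i^k-\v^k\|^2$ and $2\eta^2\rho^2/n\cdot\sum\|z_{v,i}^k-\z_v^k\|^2$ in \eqref{v^*} come from. I will combine the two pieces using the projection property (obtuse angle with the ball) so that no blow-up constant appears. If that fails, I will fall back to $(1+\Lambda)$ Young weighting with $\Lambda\sim\eta\sigma$, absorbing the loss into the $\eta\sigma/2$ contraction.

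For the main term I follow Lemma~\ref{lemmay}. Write $\hat v^{k+1}-v^*=(\v^k-\eta\d_v^k\cdots)$; more precisely, $\v^k+\eta\d_v^k-v^*+\eta(\z_v^k-\d_v^k)$. By \eqref{bar_z} the cross term vanishes in $\mathbb E_k$, and \eqref{bard-barz} gives $\eta^2\frac{1-p}{np}\sum\|d_{v,i}^k-d_{\tv,i}^k\|^2$, which \eqref{dv} converts into the final $C_1$ term. Next, $\d_v^k=g(\v^k)+e^k$ with $g(v):=\nabla_2F(\x^k,y^*(\x^k))-\nabla^2_{22}f(\x^k,y^*(\x^k))v$. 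This $g$ is the negative gradient of a $\sigma$-strongly convex, $L_{f,1}$-smooth quadratic vanishing at $v^*$, so for $\eta\le1/L_{f,1}$ we get $\|\v^k+\eta g(\v^k)-v^*\|^2\le(1-\eta\sigma)\|\v^k-v^*\|^2$. The error $e^k$ is bounded as in \eqref{phi2}--\eqref{phi3}: by $(L_{F,1}+L_{f,2}r_v)\|\y^k-y^*\|$, plus $(L_{F,1}+L_{f,2}r_v)$ times the $x,y$ consensus errors, plus $L_{f,1}$ times the $v$ consensus error, with $\|v_i^k\|\le r_v$ used throughout. Applying Young with $\Lambda=\eta\sigma/2$ gives the factor $(1+\Lambda)(1-\eta\sigma)\le1-\eta\sigma/2$, and the error gets weight $(1+2/(\eta\sigma))\eta^2\le 3\eta/\sigma$. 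Splitting $\|e^k\|^2\le3(\cdot)^2$ then yields coefficients $3\eta(\cdot)^2/\sigma$ and $9\eta(\cdot)^2/\sigma$, matching \eqref{v^*}.

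The main obstacle is the projection step: merging the projection residual with the contraction without losing the $(1-\eta\sigma/2)$ factor. I expect the intended route to be to bound $\|\v^{k+1}-v^*\|^2$ by $\|\hat v^{k+1}-v^*\|^2$ plus the spread $\frac{2}{n}\sum\|\hat v_i^{k+1}-\hat v^{k+1}\|^2$, possibly with a crude factor. The stated constants $2\rho^2$ suggest exactly this additive form, so I would verify that inequality first.
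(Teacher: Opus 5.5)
The gap is in the projection step. You leave it unresolved, and the route you sketch for it does not work as written.

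The rest of your plan is sound:
\begin{itemize}
\item the snapshot-variance term via \eqref{bar_z}, \eqref{bard-barz} and \eqref{dv};
\item the contraction $\|I-\eta\nabla^2_{22}f\|_{\textrm{op}}\le 1-\eta\sigma$;
\item the bound $\frac{2\rho^2}{n}(\sum_i\|v_i^k-\v^k\|^2+\eta^2\sum_i\|z_{v,i}^k-\z_v^k\|^2)$ on the spread of the unprojected points;
\item the check $\|v^*(\x^k)\|\le r_v$.
\end{itemize}

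\textbf{Projection step.} Writing $\v^{k+1}-v^*(\x^k)=(\hat v^{k+1}-v^*(\x^k))+(\v^{k+1}-\hat v^{k+1})$ treats the projection residual as an error term. That residual is not controlled by the spread. If all $\hat v_i^{k+1}$ coincide at a point outside the ball, the spread is $0$ while $\|\v^{k+1}-\hat v^{k+1}\|>0$.

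So your ``cleaner'' replacement of $\|\v^{k+1}-\hat v^{k+1}\|^2$ by $\frac1n\sum_i\|\hat v_i^{k+1}-\hat v^{k+1}\|^2$ is false. Your valid bound $\frac1n\sum_i\|\hat v_i^{k+1}-\mathcal{P}_{r_v}[\hat v^{k+1}]\|^2$ contains the term $\|\hat v^{k+1}-\mathcal{P}_{r_v}[\hat v^{k+1}]\|^2\le\|\hat v^{k+1}-v^*(\x^k)\|^2$. Recombining with a Young weight $\Lambda$ then puts the coefficient $(1+\Lambda)+(1+1/\Lambda)\ge4$ on $\|\hat v^{k+1}-v^*(\x^k)\|^2$, which destroys the contraction. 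The $\Lambda\sim\eta\sigma$ fallback also inflates the spread terms by a factor of order $1/(\eta\sigma)$, so the constant $2\rho^2$ cannot come out.

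The missing idea is not to separate the projection at all. Apply Jensen to the distance to $v^*(\x^k)$, then nonexpansiveness node by node (legitimate because $v^*(\x^k)$ lies in the ball):
\begin{align*}
\|\v^{k+1}-v^*(\x^k)\|^2
&\le\frac1n\sum_{i=1}^n\|\mathcal{P}_{r_v}[\hat v_i^{k+1}]-v^*(\x^k)\|^2
\le\frac1n\sum_{i=1}^n\|\hat v_i^{k+1}-v^*(\x^k)\|^2\\
&=\|\hat v^{k+1}-v^*(\x^k)\|^2+\frac1n\sum_{i=1}^n\|\hat v_i^{k+1}-\hat v^{k+1}\|^2 .
\end{align*}
The last step is the exact mean/spread identity; double stochasticity of $W$ gives $\frac1n\sum_i\hat v_i^{k+1}=\hat v^{k+1}$. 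This is exactly the additive inequality you guessed, with coefficient $1$ on both terms, and it is how the paper proceeds.

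\textbf{Constant slip in the main term.} You use a single Young split at $\Lambda=\eta\sigma/2$ after replacing $(1-\eta\sigma)^2$ by $1-\eta\sigma$. The weight on $\|e^k\|^2$ is then $3\eta/\sigma$, and $\|e^k\|^2\le3(\cdot)^2$ puts $9\eta/\sigma$ on every piece. That includes $(L_{F,1}+L_{f,2}r_v)^2\|\y^k-y^*(\x^k)\|^2$, whose stated coefficient is $3\eta/\sigma$.

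The paper gets $3$ with a nested split:
\begin{itemize}
\item it forms the contraction with $\nabla^2_{22}f(\x^k,\y^k)$;
\item it first absorbs the $\|\y^k-y^*(\x^k)\|$ error with $\delta_1=\eta\sigma$, using $(1+\eta\sigma)(1-\eta\sigma)^2\le1-\eta\sigma$;
\item only then does it split off the consensus errors with $\delta=\eta\sigma/2$.
\end{itemize}
Your single split also works if you keep the square and take $\Lambda=\eta\sigma/(1-\eta\sigma)$ (for $\eta\sigma<1$). The weight on $\|e^k\|^2$ becomes $\eta/\sigma$, every piece gets $3\eta/\sigma$, and \eqref{v^*} follows.
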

\begin{proof}
For convenience, we define
\begin{equation*}
	\left\{
	\begin{array}{l}
		\Delta_3 := \v^k+\eta \left[\nabla_2 F(\x^k,\y^{k})- \nabla^2_{22} f(\x^k,\y^{k})\v^k\right]-v^*(\x^k), \smallskip \\
		\Delta_4 := \nabla^2_{22} f(\x^k,\y^{k})\v^k-\nabla_2 F(\x^k,\y^{k})+\d_v^k.
	\end{array}
	\right.
\end{equation*}
Similar to \eqref{y2p}, for any $\delta>0$, we have
\begin{align}\label{v2p}
	\|\v^k+\eta \d_v^k-v^*(\x^k)\|^2 \leq (1+\delta)\left\|\Delta_3\right\|^2 +(1+1/\delta)\eta ^2\left\|\Delta_4\right\|^2.
\end{align}
First, we treat the term $\|\Delta_3\|^2$ in \eqref{v2p}.
Since $\nabla^2_{22} f(\x^k,y^* (\x^k))v^* (\x^k)=\nabla_2 F(\x^k,y^* (\x^k))$, we have the following reformulation:
$\Delta_3 = \Delta_{3,1} -\eta \big(\Delta_{3,2} + \Delta_{3,3}\big)$, where
\begin{equation*}
	\left\{
	\begin{array}{l}
		\Delta_{3,1} := \left[I - \eta\nabla^2_{22} f(\x^k,\y^k)\right]\Big(\v^k-v^* (\x^k)\Big), \smallskip \\
		\Delta_{3,2} := \left[\nabla^2_{22} f(\x^k,\y^k)-\nabla^2_{22} f(\x^k,y^* (\x^k))\right] v^* (\x^k), \smallskip \\
		\Delta_{3,3} := \nabla_2 F(\x^k,y^* (\x^k))-\nabla_2 F(\x^k,\y^k).
	\end{array}
	\right.
\end{equation*}
By Cauchy-Schwarz inequality, for any $\delta_1>0$, we have
\begin{align*}
	\|\Delta_3\|^2 	\leq
	\left(1+\delta_1\right) \| \Delta_{3,1} \|^2 + \left(1+1/{\delta_1}\right) \eta^2 \| \Delta_{3,2} + \Delta_{3,3} \|^2 .
\end{align*}	
Since $\eta\leq1/L_{f,1}$ and $f (x,\cdot)$ is $\sigma$-strongly convex, there holds
\begin{align*}
	\left\| \Delta_{3,1}\right\|
	\leq   \left\| I-\eta \nabla^2_{22} f (\x^k,\y^k)\right\|_{\textrm{op}}\left\|\v^k-v^* (\x^k)\right\|
	\leq   \left(1-\eta \sigma\right) \left\|\v^k-v^* (\x^k)\right\|.
\end{align*}
Furthermore, it is apparent from Assumption \ref{Assump1} that
$\left\|\Delta_{3,2}+\Delta_{3,3}\right\| \leq   \left(L_{f,2}r_v + L_{F,1}\right) 	\| \y^k-y^* (\x^k) \|$.
By taking $\delta_1=\eta \sigma$ and noting $\eta\sigma\leq\eta L_{f,1}\leq 1$, we obtain
\begin{align}\label{vt1}
	\left\|\Delta_3\right\|^2
	\leq &
	\left(1+\eta \sigma\right)\left(1-\eta \sigma\right)^2 \left\|\v^k-v^* (\x^k)\right\|^2+\left(1+1/{\eta \sigma}\right) \eta^2\left(L_{f,2}r_v + L_{F,1}\right)^2
	\left\| \y^k-y^* (\x^k) \right\|^2\nonumber\\
	\leq & \left(1-\eta  \sigma\right) \left\|\v^k-v^* (\x^k)\right\|^2+\frac{2\eta \left(L_{f,2}r_v + L_{F,1}\right)^2}{ \sigma}
	\left\| \y^k-y^* (\x^k) \right\|^2.
\end{align}
Next,   we treat the term $\|\Delta_4\|^2$ in \eqref{v2p}.  The definition of $\d^k_v$ implies that
\begin{align}\label{vt2}
	\left\|\Delta_4\right\|^2
	\leq &\ \  \frac{3}{n}\sum_{i=1}^n\big\|\nabla^2_{22} f_i(\x^k,\y^{k})(\v^k-v_i^k)\big\|^2
	+ \frac{3}{n}\sum_{i=1}^n\big\|[\nabla^2_{22} f_i(\x^k,\y^{k})-\nabla^2_{22} f_i(x^k_i,y^{k}_i)]v^k_i\big\|^2 \nonumber \\
	&\; +\frac{3}{n}\sum_{i=1}^n\big\|\nabla_2F_i(\x^k,\y^{k})-\nabla_2F_i(x_i^k,y_i^{k})\big\|^2\nonumber\\
	\leq &\ \ \frac{3(L_{f,2}r_v+L_{F,1})^2}{n}\sum_{i=1}^n \left(\|x_i^k-\x^k\|^2+\|y_i^k-\y^k\|^2\right)
	+ \frac{3L_{f,1}^2}{n}\sum_{i=1}^n \|v_i^k-\v^k\|^2.
\end{align}
Similar to Lemma \ref{lemmay}, we next evaluate $\sum_{i=1}^{n}\Vert v^{k+1}_i - v^* (\x^k)\Vert^2$  to bound $\Vert \v^{k+1} - v^* (\x^k)\Vert^2$.
By the update of $v^{k+1}_i$ in \eqref{vupdate}, we have
\begin{align*}
	\sum_{i=1}^{n}\Vert v^{k+1}_i - \, & v^* (\x^k)\Vert^2
	= \sum_{i=1}^{n}\left\| \mathcal{P}_{r_v}\left[\sum\nolimits_{j=1}^{n}w_{ij}\big(v_j^k + \eta z_{v,j}^k\big)\right]- v^* (\x^k)\right\|^2 \\
	\leq\, & \sum_{i=1}^{n}\left\| \sum\nolimits_{j=1}^{n}w_{ij}\big(v_j^k + \eta z_{v,j}^k\big)- v^* (\x^k)\right\|^2\\
	=\, &\sum_{i = 1}^{n}\left\| \sum\nolimits_{j=1}^{n}w_{ij}v_j^k -\v^k+ \eta \Big( \sum\nolimits_{j=1}^{n}w_{ij}z_{v,j}^k-\z_v^k\Big)\right\|^2 + \sum_{i = 1}^{n}\Vert \v^k - v^* (\x^k) + \eta \z_v^k \Vert^2\\
	\leq\, &\sum_{i = 1}^{n}\Vert \v^k - v^* (\x^k) + \eta \z_v^k \Vert^2+2\sum_{i = 1}^{n}\left\| \sum\nolimits_{j=1}^{n}w_{ij}v_j^k -\v^k\right\|^2 + 2\eta ^2\sum_{i = 1}^{n}\left\| \sum\nolimits_{j=1}^{n}w_{ij}z_{v,j}^k-\z_v^k\right\|^2\\
	\leq\, &\sum_{i = 1}^{n}\Vert \v^k - v^* (\x^k) + \eta \z_v^k \Vert^2+2\rho^2\sum_{i=1}^{n}\|v_i^k-\v^k\|^2 + 2\eta ^2\rho^2\sum_{i = 1}^{n}\| z_{v,i}^k-\z_v^k\|^2,
\end{align*}
where the second equality holds because	$\sum_{i = 1}^{n}\left[\sum_{j=1}^{n}w_{ij}v_j^k -\v^k+ \eta ( \sum_{j=1}^{n}w_{ij}z_{v,j}^k-\z_v^k)\right]=0$, and the last inequality follows from Lemma \ref{con_sqr}.
Then, by combining \eqref{dv}, \eqref{v2p}-\eqref{vt2}, we derive
\begin{align*}
	&\Ek{\Vert \v^{k+1} - v^* (\x^k)\Vert^2}  \leq \frac{1}{n}\Ek{\sum_{i=1}^{n}\Vert v^{k+1}_i - v^* (\x^k)\Vert^2} \\
	\leq \, &\Ek{\Vert \v^k - v^* (\x^k) + \eta \d_v^k + \eta (\z_{v}^k - \d_v^k)\Vert^2} + \frac{2\rho^2}{n}\Ek{\sum_{i=1}^{n}\|v_i^k-\v^k\|^2} + \frac{2\eta^2\rho^2}{n}\Ek{\sum_{i = 1}^{n}\| z_{v,i}^k-\z_v^k\|^2}\\
	=\, &\Ek{\Vert \v^k - v^* (\x^k) + \eta \d_v^k \Vert^2} +\eta^2\Ek{\|\z_{v}^k - \d_v^k\|^2}+ \frac{2\rho^2}{n}\Ek{\sum_{i=1}^{n}\|v_i^k-\v^k\|^2} + \frac{2\eta^2\rho^2}{n}\Ek{\sum_{i = 1}^{n}\| z_{v,i}^k-\z_v^k\|^2}\\
	\leq\, &(1+\delta)\left(1-\eta  \sigma\right) {\left\|\v^k-v^* (\x^k)\right\|^2}+\frac{2(1+\delta)\eta \left(L_{F,1}+L_{f,2} r_v\right)^2}{ \sigma}	{\left\| \y^k-y^* (\x^k) \right\|^2}\\
	&+ \frac{3(1+1/\delta)\eta^2}{n}\Big[ \left(L_{F,1}+L_{f,2} r_v\right)^2 {\sum_{i=1}^n\left( \|x_i^k-\x^k\|^2+ \|y_i^k-\y^k\|^2\right)}+ L_{f,1}^2{\sum_{i=1}^n \|v_i^k-\v^k\|^2}\Big]\\
	&+ \frac{2\rho^2}{n}\sum_{i=1}^{n}\|v_i^k-\v^k\|^2+\frac{2\rho^2\eta^2}{n}\Ek{\sum_{i = 1}^{n}\| z_{v,i}^k-\z_v^k\|^2}+\frac{(1-p)\eta^2 }{np}{\sum_{i=1}^{n}\|d_{v,i}^k - d_{\tv,i}^k\|^2},
\end{align*}
where the first inequality is because $\left(\sum_{l=1}^{s} a_l\right)^2\leq s\sum_{l=1}^{s} a_l^2$, and the equality is due to 
$$\Ek{\langle\v^k - v^* (\x^k) + \eta \d_v^k, \bar{z}_{v}^k - \bar{d}_{v}^k\rangle}
=  \Ek{1-\xi^k/p} \langle\v^k - v^* (\x^k) + \eta \d_v^k, \d_{\tv}^k - \d_{v}^k\rangle)=0.$$
Finally, the desired result \eqref{v^*} follows from setting $\delta= \eta\sigma/2$ and using $\eta\sigma\leq 1$ and \eqref{dv}.
\end{proof}

\begin{lemma}\label{*k+1-*k}
Let $L_v$ be defined in \eqref{def-constants-main-text}. Then, there hold
\begin{align}\label{y*k+1-y*k}
	\|y^*(\x^{k+1})-y^*(\x^{k})\| \leq \frac{L_{f,1}}{\sigma} \|\x^{k+1}-\x^{k}\|
	\text{~~and~~}
	\|v^*(\x^{k+1})-v^*(\x^{k})\| \leq \frac{L_v}{\sigma} \|\x^{k+1}-\x^{k}\|.
\end{align}
\end{lemma}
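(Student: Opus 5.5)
These are the standard Lipschitz properties of the lower-level solution map and of the linear-system solution map. Since the bounds are deterministic statements about $y^*$ and $v^*$ as functions of $x$, I will prove them for arbitrary $x,x'$ and then specialize to $x=\x^{k+1}$, $x'=\x^k$.

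\emph{Bound for $y^*$.} By Assumption \ref{Assump1}(a), $y^*(x)$ is unique and satisfies $\nabla_2 f(x,y^*(x))=0$. Strong convexity gives
$$\sigma\|y^*(x)-y^*(x')\|^2\le\langle \nabla_2 f(x,y^*(x))-\nabla_2 f(x,y^*(x')),\,y^*(x)-y^*(x')\rangle .$$
Using $\nabla_2 f(x,y^*(x))=0=\nabla_2 f(x',y^*(x'))$, the right-hand side equals $\langle \nabla_2 f(x',y^*(x'))-\nabla_2 f(x,y^*(x')),\,y^*(x)-y^*(x')\rangle$. Since $\nabla f=\frac1n\sum_i\nabla f_i$ is $L_{f,1}$-Lipschitz, this is at most $L_{f,1}\|x-x'\|\,\|y^*(x)-y^*(x')\|$. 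Dividing by $\sigma\|y^*(x)-y^*(x')\|$ gives the first inequality. Alternatively, the implicit function theorem gives $\nabla y^*(x)=-[\nabla^2_{22}f]^{-1}\nabla^2_{21}f$, whose operator norm is at most $L_{f,1}/\sigma$.

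\emph{Bound for $v^*$.} Write $H(x)=\nabla^2_{22}f(x,y^*(x))$ and $g(x)=\nabla_2F(x,y^*(x))$, so that $v^*(x)=H(x)^{-1}g(x)$. By Assumption \ref{Assump1}(c), $\|g\|\le L_{F,0}$, and $\|H^{-1}\|_{\mathrm{op}}\le 1/\sigma$. Hence $\|v^*(x)\|\le L_{F,0}/\sigma=r_v$. Decompose
$$v^*(x)-v^*(x')=H(x)^{-1}\big[(g(x)-g(x'))-(H(x)-H(x'))v^*(x')\big].$$
The map $x\mapsto(x,y^*(x))$ is $(1+L_{f,1}/\sigma)$-Lipschitz, because $\|(x-x',y^*(x)-y^*(x'))\|\le\|x-x'\|+\|y^*(x)-y^*(x')\|$. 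Combining this with the $L_{F,1}$-Lipschitz continuity of $\nabla F$ and the $L_{f,2}$-Lipschitz continuity of $\nabla^2_{22}f$ gives
$$\|g(x)-g(x')\|+\|H(x)-H(x')\|_{\mathrm{op}}\,r_v\le (L_{F,1}+L_{f,2}r_v)(1+L_{f,1}/\sigma)\|x-x'\|=L_v\|x-x'\|.$$
Here I use that the Frobenius-Lipschitz bound dominates the operator-norm one. Multiplying by $\|H(x)^{-1}\|_{\mathrm{op}}\le1/\sigma$ yields the bound $(L_v/\sigma)\|x-x'\|$.

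The only delicate point is bookkeeping. I must use the Lipschitz constants of the averaged functions $F,f$, which inherit the constants of the $F_i,f_i$ by convexity of the norm. I must also apply the bound $\|v^*\|\le r_v$ to $v^*$ itself, not to the iterates. This bound is exactly why $r_v=L_{F,0}/\sigma$ appears in $L_v$. Neither step is a real obstacle.
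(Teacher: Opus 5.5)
Your proposal is correct and follows essentially the same route as the paper. For $y^*$, the paper also combines strong monotonicity of $\nabla_2 f(x,\cdot)$ with the two optimality conditions. For $v^*$, it uses the same decomposition $H(x)\bigl(v^*(x)-v^*(x')\bigr)=\bigl(g(x)-g(x')\bigr)-\bigl(H(x)-H(x')\bigr)v^*(x')$, with the roles of $\x^k$ and $\x^{k+1}$ swapped and with $\sigma\|u\|\le\|Hu\|$ in place of $\|H^{-1}\|_{\mathrm{op}}\le 1/\sigma$. Your derivation of $\|v^*(x)\|\le L_{F,0}/\sigma=r_v$ from Assumption~\ref{Assump1}(c) is a step the paper uses only implicitly when bounding its $\Delta_6$ term.
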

\begin{proof}
Due to the optimality of $y^*(x)$, we have $\nabla_2 f(x, y^*(x)) = 0$ for any $x$. Now, let $x$ and $x'$ be arbitrarily fixed. It follows from the $\sigma$-strong convexity of $f(x, \cdot)$ and $L_{f,1}$-Lipschitz continuity of $\nabla f$ that
\begin{align*}
	\sigma \| y^*(x) - y^*(x') \| &\leq \| \nabla_2 f(x, y^*(x)) - \nabla_2 f(x, y^*(x')) \| \\
	&= \| \nabla_2 f(x, y^*(x')) - \nabla_2 f(x', y^*(x')) \| \leq L_{f,1} \| x - x' \|.
\end{align*}
Hence, we have
\begin{equation}
	\| y^*(x) - y^*(x') \| \leq \frac{L_{f,1}}{\sigma} \| x - x' \|. \label{y*(x)-y*(x')}
\end{equation}
The first inequality in \eqref{y*k+1-y*k} follows immediately from \eqref{y*(x)-y*(x')} by taking $x = \bar{x}^{k+1}$ and $x' = \bar{x}^{k}$.
Next, to obtain the second inequality in \eqref{y*k+1-y*k}, we define
\begin{equation*}
	\left\{
	\begin{array}{l}
		\Delta_5 := \nabla_2 F(\x^k,y^* (\x^k)) - \nabla_2 F(\x^{k+1},y^* (\x^{k+1})), \smallskip \\
		\Delta_6 := \big[\nabla^2_{22} f(\x^{k+1},y^* (\x^{k+1}))-\nabla^2_{22} f(\x^k,y^* (\x^k))\big]v^* (\x^{k+1}).
	\end{array}
	\right.
\end{equation*}
By using Assumption \ref{Assump1}, we have
\begin{equation}\label{jy-05}
	\left\{
	\begin{array}{l}
		\|\Delta_5\| \leq L_{F,1}\big(\|\x^{k+1}-\x^k\|+\|y^* (\x^{k+1})-y^* (\x^k)\|\big), \smallskip \\
		\|\Delta_6\| \leq L_{f,2}r_v\big(\|\x^{k+1}-\x^k\|+\|y^* (\x^{k+1})-y^* (\x^k)\|\big).
	\end{array}
	\right.
\end{equation}
It follows from $\nabla^2_{22} f(\x^{k+1},y^* (\x^{k+1}))v^* (\x^{k+1})=\nabla_2 F(\x^{k+1},y^* (\x^{k+1}))$ that
\begin{align}\label{jy-04}
	\nabla^2_{22} f(\x^k,y^* (\x^k))(v^* (\x^k)-v^* (\x^{k+1})) = \Delta_5 + \Delta_6.
\end{align}
The $\sigma$-strong convexity of $f(\x^k,\cdot)$ implies that
\begin{align*}
	\sigma\|v^* (\x^k)& - v^* (\x^{k+1})\|
	\leq \|\nabla^2_{22} f(\x^k,y^* (\x^k))(v^* (\x^k)-v^* (\x^{k+1}))\|  \\
	& \stackrel{(\ref{jy-04})}= \|\Delta_5 + \Delta_6\|  \leq \|\Delta_5\| + \|\Delta_6\|
	\stackrel{(\ref{y*(x)-y*(x')}, \ref{jy-05})}\leq  \left(L_{F,1}+L_{f,2}r_v\right)\left(1+ {L_{f,1}}/{\sigma} \right)\|\x^{k+1}-\x^k\|,
\end{align*}
which implies the second inequality in \eqref{y*k+1-y*k} by noting the definition of $L_v$ in \eqref{def-constants-main-text}.
\end{proof}

\subsubsection{Proof of Theorem \ref{cr}.}
\begin{proof}
By using Cauchy-Schwarz inequality again, we derive
\begin{align*}
	\Ek{\|\y^{k+1}-y^*(\x^{k+1})\|^2}\leq \left(1+\frac{\beta\sigma}{4}\right)\Ek{\|\y^{k+1}-y^*(\x^{k})\|^2}+ \left(1+\frac{4}{\beta\sigma}\right)\Ek{\|y^*(\x^{k+1})-y^*(\x^{k})\|^2}.
\end{align*}
By taking into account \eqref{y*}, the first inequality in \eqref{y*k+1-y*k}, and $\beta\sigma\leq 1$, we obtain
\begin{align}\label{yf}
	\Ek{\|\y^{k+1}-&y^*(\x^{k+1})\|^2}
	\leq \Big(1-\frac{\beta \sigma}{4}\Big)\Ek{\|\y^k-y^*(\x^k)\|^2}+\frac{15\beta L_{f,1}^2}{4 n\sigma} \Ek{\sum_{i=1}^n\left(\|\x^k-x^k_i\|^2+\|\y^k-y^k_i\|^2\right)} \nonumber \\
	& +\frac{5L_{f,1}^2}{\beta\sigma^3}\Ek{\|\x^{k+1}-\x^{k}\|^2}
	+ \frac{5(1-p) \beta^2 C_2}{4np}\Ek{\sum_{i=1}^{n}\left(\|x_i^k-\tx_i^{k}\|^2+\|y_i^k-\ty_i^{k}\|^2\right)}.
\end{align}
Similarly, we can derive
\begin{align*}
	\Ek{\|\v^{k+1}-v^*(\x^{k+1})\|^2}\leq \left(1+\frac{\eta\sigma}{4}\right)\Ek{\|\v^{k+1}-v^*(\x^{k})\|^2}+ \left(1+\frac{4}{\eta\sigma}\right)\Ek{\|v^*(\x^{k+1})-v^*(\x^{k})\|^2}.
\end{align*}
By taking into account \eqref{v^*}, the second inequality in \eqref{y*k+1-y*k}, and $\eta\sigma\leq 1$, we obtain
\begin{align}\label{vf}
	\Ek{\|\v^{k+1}- \, & v^*(\x^{k+1})\|^2}
	\leq  \left(1-\frac{\eta  \sigma}{4}\right) \Ek{\left\|\v^k-v^* (\x^k)\right\|^2}+\frac{15\eta \left(L_{F,1}+L_{f,2} r_v\right)^2}{4\sigma}
	\Ek{\left\| \y^k-y^* (\x^k) \right\|^2}\nonumber\\
	& +\frac{5L_v^2}{\eta\sigma^3}\Ek{\|\x^{k+1}-\x^{k}\|^2} +\frac{45\eta\left(L_{F,1}+L_{f,2} r_v\right)^2}{4n\sigma}\Ek{\sum_{i=1}^n \left(\|x_i^k-\x^k\|^2+ \|y_i^k-\y^k\|^2\right)}\nonumber\\
	&  + \Big(\frac{45\eta L_{f,1}^2}{4\sigma}+\frac{5\rho^2}{2}\Big)\frac{1}{n}\Ek{\sum_{i=1}^{n}\|v_i^k-\v^k\|^2} + \frac{5\rho^2\eta ^2}{2n} \Ek{\sum_{i = 1}^{n}\| z_{v,i}^k-\z_v^k\|^2} \nonumber \\
	& + \frac{5(1-p) \eta^2 C_1}{4np}\Ek{\sum_{i=1}^{n}\left(\|x_i^k-\tx_i^{k}\|^2+\|y_i^k-\ty_i^{k}\|^2+\|v_i^k-\tv_i^{k}\|^2\right)}.
\end{align}
Define the Lyapunov function as in \eqref{Lyapunov}. 
By taking expectation of   the inequalities in Lemma \ref{bar}, Remark \ref{bar_re} and Lemma \ref{dlemma1a},  as well as \eqref{bound_for_zx-barz}-\eqref{bound_for_zv-barz}, \eqref{yf} and \eqref{vf}, and combining them all together, it is straightforward to derive
\begin{align}
	\E{V_{k+1} - V_{k}} \leq 
	& -\frac{\alpha}{2} \E{\norm{\nabla \Phi(\x_{k})}} - A_1 \E{\norm{\d_{x}^k}} -A_2 \E{\norm{\y^k - y^{*}(\x^k)}} - A_3 \E{\norm{\v^k - v^{*}(\x^k)}} \nonumber 
	\\
	& - \frac{A_4}{n}\E{\sum_{i=1}^n\norm{x_i^k -\x^k}} - \frac{A_5}{n}\E{\sum_{i=1}^n\norm{y_i^k -\y^k}} - \frac{A_6}{n}\E{\sum_{i=1}^n\norm{v_i^k -\v^k}} \nonumber 
	\\
	& - \frac{A_7}{n}\E{\sum_{i=1}^n\norm{z_{x,i}^k - \z_{x}^k}} - \frac{A_8}{n}\E{\sum_{i=1}^n\norm{z_{y,i}^k - \z_{y}^k}} - \frac{A_9}{n}\E{\sum_{i=1}^n\norm{z_{v,i}^k - \z_{v}^k}} \nonumber 
	\\ 
	& - \frac{A_{10}}{n}\E{\sum_{i=1}^n\norm{x_i^k -\tx_{i}^k}} - \frac{A_{11}}{n}\E{\sum_{i=1}^n\norm{y_i^k -\ty_{i}^k}} - \frac{A_{12}}{n} \E{\sum_{i=1}^n \norm{v_i^k -\tv_{i}^k}}, \label{V}
\end{align}
where the coefficients are given by
\begin{align*}
	A_1 & = \frac{\alpha}{2} - \frac{L_{\Phi}\alpha^2}{2}-\frac{4\alpha^2 (\alpha^2 a_6C_1 + a_7\beta^2 C_1  + a_8\eta^2 C_2) }{(1-\rho)^3 \rho p} - \frac{4\alpha^2 a_9}{1-\rho},\\
	A_2 & = \frac{a_1 \beta\sigma}{2} - \frac{5\alpha L_1^2}{2} - \frac{3\eta L_1^2 a_2}{\sigma} - 
	\frac{(8\beta^2 L_{f,1}^2 + 20\eta^2 L_1^2)C_1(a_6\alpha^2 + a_7 \beta^2) + 8\beta^2 L_{f,1}^2 C_2 a_8 \eta^2}{(1-\rho)^3\rho p} \\
	& \quad - \frac{(8\beta^2 L_{f,1}^2 a_{10} + 20 \eta^2 L_1^2 a_{11})}{1-\rho}, \\
	A_3 & = \frac{a_2\eta \sigma}{2} - \frac{5\alpha L_{f,1}^2}{2} - \frac{20\eta^2 L_{f,1}^2 C_1(a_6\alpha^2 + a_7\beta^2)}{(1-\rho)^3 \rho p} - \frac{20\eta^2 L_{f,1}^2 a_{11}}{1-\rho},\\
	A_4 & = {a_3(1-\rho)} - \frac{5\alpha L_1^2}{2} - \frac{3\beta L_{f,1}^2 a_1}{\sigma} -\frac{9\eta L_1^2 a_2}{\sigma} - \frac{C_1(a_6 \alpha^2 + a_7 \beta^2)(8+8\beta^2 L_{f,1}^2 + 20\eta^2 L_1^2)}{(1-\rho)^3 \rho p} \\
	& \quad - \frac{C_2 a_8 \eta^2(8+8\beta^2 L_{f,1}^2)}{(1-\rho)^3 \rho p} - \frac{8a_9}{1-\rho}-\frac{8\beta^2 L_{f,1}^2 a_{10}}{1-\rho} - \frac{20\eta^2 L_1^2 a_{11}}{1-\rho},\\
	A_5 & = {a_4(1-\rho)} - \frac{5\alpha L_1^2}{2} - \frac{3\beta L_{f,1}^2 a_1}{\sigma} -\frac{9\eta L_1^2 a_2}{\sigma} - \frac{C_1(a_6 \alpha^2 + a_7 \beta^2)(8+8\beta^2 L_{f,1}^2 + 20\eta^2 L_1^2)}{(1-\rho)^3 \rho p} \\
	& \quad - \frac{C_2 a_8 \eta^2(8+8\beta^2 L_{f,1}^2)}{(1-\rho)^3 \rho p} - \frac{(8 + 8\beta^2 L_{f,1}^2) a_{10}}{1-\rho} - \frac{20\eta^2 L_1^2 a_{11}}{1-\rho},\\
	A_6 & = a_5(1-\rho) - \frac{5\alpha L_{f,1}^2}{2} - \Big(\frac{9\eta L_{f,1}^2}{\sigma} + {2\rho^2}\Big)a_2 - \frac{8C_1 (a_6 \alpha^2 + a_7 \beta^2)}{(1-\rho)^3 \rho p} - \frac{8a_{11}}{1-\rho},\\
	A_7 & = a_6 \alpha^2 (1-\rho) - \frac{\rho^2 \alpha^2 a_3}{1-\rho} - \frac{16\alpha^2\eta^2 C_1 a_6 \alpha^2}{(1-\rho)^3 \rho p} - \frac{16\alpha^2\eta^2 C_1 a_7 \beta^2}{(1-\rho)^3 \rho p} - 
	\frac{16\alpha^2\beta^2 C_2 a_8 \eta^2}{(1-\rho)^3 \rho p} - \frac{4a_9 \alpha^2}{1-\rho},\\
	A_8 & = a_7 \beta^2 (1-\rho) - \frac{\rho^2 \beta^2 a_4}{1-\rho} - \frac{16\alpha^2\eta^2 C_1 a_6 \alpha^2}{(1-\rho)^3 \rho p} - \frac{16\alpha^2\eta^2 C_1 a_7 \beta^2}{(1-\rho)^3 \rho p} - 
	\frac{16\alpha^2\beta^2 C_2 a_8 \eta^2}{(1-\rho)^3 \rho p} - \frac{4a_{10}\beta^2}{1-\rho},\\
	A_9 & = a_8 \eta^2 (1-\rho) - 2\eta^2\rho^2 a_2 -\frac{\rho^2 \alpha^2 a_5}{1-\rho} - \frac{16\alpha^2\eta^2 C_1 a_6 \alpha^2}{(1-\rho)^3 \rho p} - \frac{16\alpha^2\eta^2 C_1 a_7 \beta^2}{(1-\rho)^3 \rho p}  - \frac{4a_{11} \eta^2}{1-\rho},\\
	A_{10} & = \frac{a_9 (p+\rho -1)}{\rho} - \frac{\rho \beta^2 C_2 a_1}{p} - \frac{\rho \eta^2 C_1 a_2}{p} - \frac{(1- p +\rho) \big((a_6 \alpha^2 + a_7 \beta^2)C_1 + a_8 \eta^2 C_2\big)}{(1-\rho)^2 \rho^2 p}\\
	&\quad \,-\frac{4(\alpha^2+\beta^2+\eta^2)C^2(a_6 \alpha^2 + a_7\beta^2 +a_8\eta^2)}{(1-\rho)^3p^2}, \\
	A_{11} & = \frac{a_{10} (p+\rho -1)}{\rho} - \frac{\rho \beta^2 C_2 a_1}{p} - \frac{\rho \eta^2 C_1 a_2}{p} - \frac{(1- p +\rho) \big((a_6 \alpha^2 + a_7 \beta^2)C_1 + a_8 \eta^2 C_2\big)}{(1-\rho)^2 \rho^2 p}\\
	&\quad \,-\frac{4(\alpha^2+\beta^2+\eta^2)C^2(a_6 \alpha^2 + a_7\beta^2 +a_8\eta^2)}{(1-\rho)^3p^2},\\
	A_{12} & = \frac{a_{11}(p+\rho -1)}{\rho} - \frac{\rho \eta^2 C_1 a_2}{p} - \frac{(1- p +\rho) (a_6 \alpha^2 + a_7 \beta^2)C_1 }{(1-\rho)^2 \rho^2 p}-\frac{4(\alpha^2+\beta^2+\eta^2)C^2(a_6 \alpha^2 + a_7\beta^2)}{(1-\rho)^3p^2}.
\end{align*}
Here, constants such as $L_{\Phi}, L_{v}$ and $L_1$ are defined in \eqref{def-constants-main-text}. Set   
\begin{align*}
	a_1 = a_2 = a_6 = a_7 =1, \; a_3 = a_4 = \frac{(1-\rho)^2}{5\rho^2}, \; a_5 = a_8 = \frac{10\rho^2}{1-\rho},  \; a_9 = a_{10} = \frac{(1-\rho)^4}{320}, \; a_{11} = \frac{(1-\rho)\rho^2}{4},
\end{align*}
and choose the stepsizes satisfying
\begin{align}\label{stepsize_bound}
	\beta  \leq \min \Big\{
	&
	\frac{1}{3C},
	\frac{(1-\rho)\sigma}{128a_{10}L_{f,1}^2}, 
	\frac{(1-\rho)^3\rho p\sigma}{128L_{f,1}^2}, 
	\frac{a_3 (1-\rho)\sigma}{24L_{f,1}^2}, 
	\frac{1}{L_{f,1}}, 
	\sqrt{\frac{a_3 (1-\rho)^4 \rho p}{576C}}, 
	\sqrt{\frac{a_3 (1-\rho)^2}{64 a_{10}L_{f,1}^2}}, 
	\frac{a_3 (1-\rho)\sigma}{48L_{f,1}^2}, \nonumber \\
	& \sqrt{\frac{a_5 (1-\rho)^4 \rho p}{80C}}, 
	\sqrt{\frac{3(1-\rho)^4 \rho p}{80}}, 
	\sqrt{\frac{a_9 (p+\rho-1)p}{3\rho^2C}},
	\sqrt{\frac{a_9 (p+\rho-1)(1-\rho)^2\rho p}{12(1-p+\rho)C}}, 
	\nonumber\\
	&\sqrt{\frac{a_{11} (p+\rho-1)(1-\rho)^2\rho p}{6(1-p+\rho)C}}, \sqrt{\frac{a_9(p+\rho-1)(1-\rho)^3 p^2}{16\rho}}, \sqrt{\frac{a_{11}a_8(p+\rho-1)(1-\rho)^3 p^2}{8\rho}}\nonumber	
	\Big\},\\
	\eta \leq \min \Big\{
	&
	\frac{1}{3a_8 C}, 
	\sqrt{\frac{3\beta \rho (1-\rho)^3 \rho p}{640 {L_1}^2}}, 
	\frac{(1-\rho)^3\rho p\sigma}{80L_{f,1}^2}, 
	\frac{(1-\rho)\sigma}{120 a_{11}L_{f,1}^2},
	\frac{a_3 (1-\rho)\sigma}{72L_{1}}, 
	\frac{1}{L_{1}}, 
	\sqrt{\frac{a_3 (1-\rho)^4 \rho p}{128a_8 C}}, 
	\nonumber\\
	&
	\sqrt{\frac{a_3 (1-\rho)^2}{160 a_{11}L_{1}^2}},
	\frac{a_3 (1-\rho)\sigma}{72 L_{1}^2}, 
	\frac{a_5 \sigma(1-\rho)}{45L_{f,1}^2}, 
	\sqrt{\frac{3(1-\rho)^4 \rho p}{80}}, 
	\sqrt{\frac{a_9 (p+\rho-1)p}{3\rho^2C}}, 
	\nonumber\\
	&
	\sqrt{\frac{a_9 (p+\rho-1)(1-\rho)^2\rho p}{12(1-p+\rho)a_8 C}}, 
	\sqrt{\frac{a_{11}(p+\rho -1)}{2\rho^2 C}},
	\sqrt{\frac{a_9(p+\rho-1)(1-\rho)^3 p^2}{16\rho}}\nonumber\Big\},\\
	\alpha \leq \min \Big\{
	&
	\frac{1}{3L_{\phi}},
	\frac{1}{3C},
	\frac{(1-\rho)^3\rho p}{24}, 
	\frac{1-\rho}{24a_9},
	\frac{\beta \sigma}{20L_{1}^2},
	\frac{\eta\sigma}{15L_{f,1}^2},
	\frac{a_3(1-\rho)}{20L_{1}^2},
	\sqrt{\frac{a_3 (1-\rho)^4 \rho p}{576C}}, 
	\frac{2a_5 (1-\rho)}{25L_{f,1}^2},
	\nonumber\\
	&
	\sqrt{\frac{a_5 (1-\rho)^4 \rho p}{80C}},
	\frac{(1-\rho)\eta}{\sqrt{5}\rho},
	\sqrt{\frac{3a_8(1-\rho)^4 \rho p}{80}},
	\sqrt{\frac{a_9 (p+\rho-1)(1-\rho)^2\rho p}{12(1-p+\rho)C}}, 
	\nonumber\\
	&\sqrt{\frac{a_{11}(p+\rho-1)(1-\rho)^2\rho p}{6(1-p+\rho)C}}, 
	\sqrt{\frac{a_9(p+\rho-1)(1-\rho)^3 p^2}{16\rho}}, \sqrt{\frac{a_{11}a_8(p+\rho-1)(1-\rho)^3 p^2}{8\rho}}
	\Big\}.
\end{align}
Assume $p+\rho>1$. Then, 
it is elementary to show from \eqref{stepsize_bound} that $ A_1, A_2, \ldots, A_{12} $ are all nonnegative. Thus, \eqref{V} implies
\[ 
\E{V_{k+1} - V_{k}} \leq -\frac{\alpha}{2} \E{\| \nabla \Phi(\bar{x}^k) \|^2}.
\]
By telescoping the above inequality, we obtain
{
\[ 
\E{\frac{1}{K} \sum\nolimits_{k=0}^{K-1} \| \nabla \Phi(\bar{x}^k) \|^2 }\leq \frac{2V_0}{\alpha K} = \mathcal{O}\left( \frac{1}{K} \right).
\]
}
The consensus error can also be established by
$
\E{V_{k+1} - V_{k}} \leq -{A_4}\E{\sum_{i=1}^{n} \| x_{i}^{k} - \bar{x}^{k} \|^2}\big/n,
$
which yields
{
\[ 
 \E{\frac{1}{nK}\sum\nolimits_{k=0}^{K-1} \sum\nolimits_{i=1}^{n} \| x_{i}^{k} - \bar{x}^{k} \|^2 }\leq \frac{V_0}{A_4 K} = \mathcal{O}\left( \frac{1}{K} \right).
\]
}
Similarly, we can derive
{
\[ 
 \E{\frac{1}{nK}\sum\nolimits_{k=0}^{K-1} \sum\nolimits_{i=1}^{n} \| y_{i}^{k} - \bar{y}^{k} \|^2 }= \mathcal{O}\left( \frac{1}{K} \right) \quad \text{and} \quad  \E{\frac{1}{nK}\sum\nolimits_{k=0}^{K-1} \sum\nolimits_{i=1}^{n} \| v_{i}^{k} - \bar{v}^{k} \|^2 }= \mathcal{O}\left( \frac{1}{K} \right),
\]
}
which completes the proof of Theorem \ref{cr}.
\end{proof}
\begin{remark}
The method of selecting the stepsizes is as follows: the subsequent $\mathbf{m}$ terms subtracted from the first term should each be no greater than $1/ \mathbf{m}$
times the value of the first term to ensure that all $A_i$ are nonnegative. Then, the appropriate range for the stepsizes can be determined.
\end{remark}

\begin{remark}
In our analysis, the condition $p >1-\rho$ is required to ensure that $A_{10}, A_{11}, A_{12}$ are nonnegative. In fact, this requirement is a flaw of the proof and not essential, which we now explain. 
In the proof of Lemma \ref{lem:xk+1-txk+1}, we used the Cauchy-Schwarz inequality $2\langle a, b \rangle \leq t \|a\|^2 +  \|b\|^2/t$ with $t = \rho/(1-\rho)$.
This choice of $t$ makes the condition $(1-p)/\rho<1$ necessary to bound the term $\Ek{\sum_{i=1}^n \|x_i^{k+1} - \tx_i^{k+1}\|^2}$. If a different $t>0$ is chosen, the condition $p>1-\rho$ can then be replaced by $p>1/(1+t)$. As $t$ can be arbitrarily large, $p>0$ can be arbitrarily small. 
\end{remark}

\subsubsection{Proof of Theorem \ref{cp}.}
\begin{proof}
From \eqref{V} and \eqref{stepsize_bound}, we can establish the following inequality:
$$
\E{V_{k+1} - V_{k}} \leq -\frac{\alpha}{2} \E{\| \nabla \Phi(\bar{x}^k) \|^2} -\frac{A_4}{n} \E{\sum_{i=1}^{n} \| x_{i}^{k} - \bar{x}^{k} \|^2} -\frac{A_5}{n} \E{\sum_{i=1}^{n} \| y_{i}^{k} - \bar{y}^{k} \|^2},
$$
which yields 
$
	\sum_{k=0}^{\infty} \E{\norm{\nabla \Phi(\x^k)} + \mean \norm{x_i^k - \x^k} + \mean \norm{y_i^k - \y^k}}  
	\leq {V_0}/{C_3} < \infty,
$
where $C_3 = \min\{\alpha/2, A_4, A_5\}$.
Therefore, the term $\norm{\nabla \Phi(\x^k)} + \mean \norm{x_i^k - \x^k} + \mean \norm{y_i^k - \y^k}$ approaches 0 in expectation as $k \rightarrow \infty$.
The desired result follows from the Markov inequality.
\end{proof}

\end{document}